\newtheorem{theorem}{Theorem}[section]
\newtheorem{lemma}[theorem]{Lemma}
\newtheorem{cor}[theorem]{Corollary}
\newtheorem{con}[theorem]{Conjecture}
\theoremstyle{definition}
\newtheorem{definition}[theorem]{Definition}
\newtheorem{example}[theorem]{Example}
\theoremstyle{remark}
\newtheorem{remark}[theorem]{Remark}
\numberwithin{equation}{section}
\newcommand\C{\mathbb{C}}
\newcommand\K{\mathbb{K}}
\newcommand\Z{\mathbb{Z}}
\newcommand\R{\mathbb{R}}
\newcommand\N{\mathbb{N}}
\newcommand\T{\mathbb{T}}
\newcommand\cH{\mathcal{H}}
\newcommand\cK{\mathcal{K}}
\newcommand\cL{\mathcal{L}}
\newcommand\cR{\mathcal{R}}
\newcommand\fA{\mathfrak{A}}
\newcommand\fS{\mathfrak{S}}
\newcommand{\Hom}{\operatorname{Hom}}
\newcommand{\Tr}{\operatorname{Tr}}
\newcommand{\Proj}{\operatorname{Proj}}
\newcommand{\Char}{\operatorname{Char}}
\newcommand{\ex}{\operatorname{ex}}
\newcommand{\tr}{\operatorname{tr}}
\newcommand{\hLambda}{\hat{\Lambda}}
\newcommand\inpr[2]{\langle{#1,#2}\rangle}
\title[Indecomposable characters of infinite dimensional groups]{Indecomposable characters of 
infinite dimensional groups associated with operator algebras} 
\author{Takumi Enomoto}
\address{Department of Mathematics\\ Graduate School of Science\\
Kyoto University\\ Sakyo-ku, Kyoto 606-8502\\ Japan}
\email{takumi.0.enomoto@gmail.com}
\author{Masaki Izumi}
\address{Department of Mathematics\\ Graduate School of Science\\
Kyoto University\\ Sakyo-ku, Kyoto 606-8502\\ Japan}
\email{izumi@math.kyoto-u.ac.jp}
\subjclass[2010]{ 
Primary 22E66; Secondary 46L99}
\keywords{ 
characters, infinite dimensional groups, AF algebras, ergodic method}
\thanks{Supported in part by the Grant-in-Aid for Scientific Research (B) 22340032, JSPS}
\begin{document} 

\begin{abstract} We determine the indecomposable characters of several classes of infinite dimensional groups associated with 
operator algebras, including the unitary groups of arbitrary unital simple  AF algebras and II$_1$ factors. 
\end{abstract}

\maketitle

\section{Introduction} 
One of the most fundamental tasks in representation theory is to classify irreducible representations of a given group. 
For a locally compact group $G$, this is known to be equivalent to classifying irreducible representations of the group $C^*$-algebra $C^*(G)$. 
The classical Mackey-Glimm dichotomy (see \cite{P79}) says that there is no hope to accomplish it if the group possesses 
a non-type I representation. 
This is a consequence of the fact that if a $C^*$-algebra $A$ has a non-type I representation, the set of pure states $P(A)$ of $A$ 
modulo the natural action of the unitary group $U(A)$ of $A$ does not have a reasonable Borel structure. 
However, we still have a hope to classify finite factor representations because the tracial states of $A$ form a nice Choquet simplex $T(A)$, 
on which $U(A)$ acts trivially. 
A tracial state of the group $C^*$-algebra $C^*(G)$ corresponds to a character of $G$ as we introduce now. 

\begin{definition} Let $G$ be a (not necessarily locally compact) topological group. 
A \textit{character} $\chi$ of $G$ is a positive definite continuous function $\chi: G\to \C$ satisfying 
$\chi(hgh^{-1})=\chi(g)$ for any $g,h\in G$, and $\chi(e)=1$. 
We denote by $\Char(G)$ the set of characters of $G$. 
The character space $\Char(G)$ is a convex set, and an extreme point of $\Char(G)$ is said to be \textit{indecomposable}. 
We denote by $\ex \Char(G)$ the set of indecomposable characters of $G$. 
\end{definition}

The classification of finite factor representations of $G$, up to quasi-equivalence, is equivalent to that of $\ex\Char(G)$, even when 
$G$ is not locally compact (see \cite[Theorem B]{HH05-1}). 

The first classification result of $\ex\Char(G)$ for a non-type I group was obtained by Thoma \cite{Th64}, who gave an explicit description 
of the indecomposable characters of the infinite symmetric group $\fS_\infty$, the inductive limit of the symmetric groups $\fS_n$. 
Thoma's description of $\ex\Char(G)$ involves infinitely many parameters, whose interpretation in terms of infinite paths on 
Yong diagrams was given by Vershik-Kerov \cite{VK81}. 
These works opened up a totally new field in asymptotic representation theory (see \cite{K03}). 

The first classification result for a non-locally compact group was obtained by Voiculescu \cite{V76}, who worked on the infinite unitary group 
$U(\infty)$, the inductive limit of the unitary groups $U(n)$. 
More precisely, he gave a concrete list of indecomposable characters, now called the Voiculescu characters, and its completeness was 
later observed by Vershik-Kerov \cite{VK82} and Boyer \cite{B83} independently (see \cite{OO98} too).  

There are a few more examples of groups whose indecomposable characters are explicitly classified 
(see \cite{S76}, \cite{B93}, \cite{HH05-2}, \cite{HHH08}, \cite{HHH09}, \cite{D}, \cite{DM} for example), but they are restricted to inductive limits of compact groups 
(and their completion in some topology). 
One of the purposes of this paper is to classify the indecomposable characters of a large class of infinite dimensional groups coming from 
operator algebras, and indeed some of them are not in the category of topological groups mentioned above. 
We first classify the indecomposable characters of the unitary groups of simple unital AF algebras. 
Although such an attempt was already done for the CAR algebra by Boyer \cite{B93}, his proof does not seem to be adequate. 
While Boyer studied the structure of the Stratila-Voiculescu AF algebra \cite{SV75} of the inductive limit group 
$U(2^\infty)=\varinjlim U(2^n)$, 
we employ Okounkov-Olshanski's approach in \cite{OO98} based on the Vershik-Kerov ergodic method \cite{VK81}. 

Using the classification result for the unitary groups of unital simple AF algebras, we deduce classification results for broader 
classes of groups. 
For any group $G$ in these classes, the product of two indecomposable characters is again indecomposable, and $\ex\Char(G)$ 
is a multiplicative semigroup. 
For example, we see that every indecomposable character of the unitary group $U(A_\theta)$ of the irrational rotation algebra 
$A_\theta$ is of the form $\psi\tau^p\overline{\tau}^q$, where $\psi$ is a character of the $K_1$-group $K_1(A_\theta)\cong \Z^2$ and 
$\tau$ is the unique trace of $A_\theta$. 
Thus we have a semigroup isomorphism $\ex \Char(U(A_\theta))\cong \T^2\times \Z_{\geq 0}^2$.

To state our main results, we introduce the notation for AF algebras now. 
The reader is referred to \cite{D96} for the basics of AF algebras. 
An AF algebra $A=\varinjlim A_n$ is an inductive limit, in the category of $C^*$-algebras, of an inductive system 
$\{A_n\}_{n=1}^\infty$ of finite dimensional $C^*$-algebras. 
Throughout this note, we assume that the connecting map from $A_n$ to $A_m$ is an embedding for any $n<m$, 
and it is unital whenever $A$ is unital. 
We consider two infinite dimensional groups associated with the AF algebra $A$:  
the full unitary group $U(A)$ of $A$ (if $A$ is not unital, we set $U(A)=\{u\in U(A+\C1);\;u-1\in A \}$) equipped with 
the norm topology, and the inductive limit group $U_{\to}(A)=\varinjlim U(A_n)$ equipped with the inductive limit topology. 
When $A$ is not unital, we embed $U(A_n)$ into $U(A)$ by $u\mapsto u+1-1_{A_n}$, and hence the above inductive limit 
makes sense in $U(A)$. 
The group $U_\to(A)$ is identified with a dense subgroup of the unitary group $U(A)$, 
and the inductive limit topology of $U_\to(A)$ is stronger than the relative topology of 
the norm topology of $U(A)$. 
Note that the isomorphism class of $U_\to(A)$ as a topological group depends only on the 
isomorphism class of $A$, which justifies the notation $U_\to(A)$, 
because Elliott's classification theorem \cite{E} says that isomorphic AF algebras have isomorphic algebraic inductive limits.

The difference between the two groups $U_\to(A)$ and $U(A)$ could be very subtle from the view point of representation theory. 
Indeed, when $A=\K$, the set of compact operators on a separable infinite dimensional Hilbert space, 
the group $U_\to(\K)$ is isomorphic to $U(\infty)$ mentioned above. 
While $U(\infty)$ has uncountably many type II$_1$ factor representations as shown by Voiculescu, Kirillov \cite{K73} showed that $U(\K)$ is 
a type I group with only countably many irreducible representations.  

We introduce the notion of determinant associated with $\varphi\in \Hom(K_0(A),\Z)$. 
For the $K$-theory of $C^*$-algebras, the reader is referred to \cite{Bl98}. 
We denote by $\Proj(A)$ the set of projections in $A$. 
For $u\in U(A_n)$, let $u=\sum_{i=1}^k z_ie_i$ be the spectral decomposition with $z_i\in \T$ 
and $e_i\in \Proj(A_n)$. 
We set $\det_\varphi u=\prod_{i=1}^kz_i^{\varphi([e_i])}$, where $[e_i]\in K_0(A)$ is the $K_0$-class of $e_i$. 
Then $\det_\varphi:U_\to(A)\to \T$ is a well-defined map, which is continuous and multiplicative on $U(A_n)$ for any $n$, 
and hence on $U_\to(A)$.

\begin{theorem} \label{main1} Let $A$ be an infinite dimensional unital simple AF algebra. 
Then \\
$(1)$
\begin{align*}
\lefteqn{\ex\Char(U_\to(A))} \\
 &=\{\mathrm{det}_\varphi(\prod_{i=1}^p\tau_i)(\prod_{j=1}^q\overline{\tau'_j}); \; 
\tau_i,\tau'_j\in \ex T(A),\;p,q\geq 0,\; \varphi\in \Hom(K_0(A),\Z)\}.
\end{align*}
$(2)$
$$\ex\Char(U(A))=\{(\prod_{i=1}^p\tau_i)(\prod_{j=1}^q\overline{\tau'_j}); \; 
\tau_i,\tau'_j\in \ex T(A),\; p,q\geq 0\}.$$
\end{theorem}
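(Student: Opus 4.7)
The plan is to follow the Okounkov--Olshanski \cite{OO98} adaptation of the Vershik--Kerov ergodic method to establish $(1)$, and then to deduce $(2)$ by a norm-continuity argument.

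\emph{Easy direction of $(1)$.} For each $\tau\in\ex T(A)$, the GNS representation $\pi_\tau$ of $A$ generates a type $\mathrm{II}_1$ factor, and the restriction of its canonical trace to $U(A)$ (and hence to $U_\to(A)$) is an indecomposable character. The map $\det_\varphi$ is continuous and multiplicative on each $U(A_n)$ by direct inspection of block decompositions, and so is a continuous one-dimensional character of $U_\to(A)$. The nontrivial point is that an arbitrary product $\det_\varphi\cdot(\prod_{i=1}^p\tau_i)(\prod_{j=1}^q\overline{\tau'_j})$ is still indecomposable: this amounts to factoriality of the corresponding diagonal tensor product representation restricted to $U_\to(A)$, which I would establish by combining disjointness of the GNS representations of distinct extreme traces with an ergodicity-of-product argument parallel to \cite{OO98}.

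\emph{Hard direction of $(1)$.} Let $\chi\in\ex\Char(U_\to(A))$ and $A_n=\bigoplus_{i=1}^{k_n}M_{d_{n,i}}(\C)$, so that $U(A_n)=\prod_i U(d_{n,i})$. The restriction $\chi|_{U(A_n)}$ is continuous, central, and positive definite, hence expands uniquely as
$$\chi|_{U(A_n)}=\sum_\Lambda M_n(\Lambda)\frac{\chi^\Lambda}{\dim\chi^\Lambda},$$
where $\Lambda=(\lambda^{(1)},\ldots,\lambda^{(k_n)})$ ranges over tuples of signatures and $M_n$ is a probability measure. The family $(M_n)$ is coherent with respect to the branching rules for $U(A_n)\hookrightarrow U(A_{n+1})$ encoded in the Bratteli diagram of $A$, and by the Vershik--Kerov ergodic principle the extremality of $\chi$ is equivalent to the extremality of $(M_n)$ in the resulting projective system. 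The central technical ingredient is the Okounkov--Olshanski asymptotic formula for normalized characters of $U(d)$: as $d\to\infty$ and the positive- and negative-row frequencies $\{\lambda_i/d\}$ of a signature converge to sequences $(\alpha_i)$, $(\beta_j)$ together with a residual integer parameter, the normalized character $\chi^\lambda/\dim\chi^\lambda$ converges to a Voiculescu-type multiplicative expression in the eigenvalues. Applying this blockwise and invoking simplicity of $A$ (so that the relative block weights $d_{n,i}/\sum_j d_{n,j}$ are asymptotically governed by a single point of $T(A)$), the surviving row data assembles into products of extreme traces $\tau_i$, the column data into conjugates $\overline{\tau'_j}$, and the residual integer data is additive on projections, hence realized by a unique $\varphi\in\Hom(K_0(A),\Z)$ through $\det_\varphi$.

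\emph{From $(1)$ to $(2)$ and main obstacle.} Any $\chi\in\Char(U(A))$ restricts to a character of the dense subgroup $U_\to(A)$, so by $(1)$ its restriction has the form $\det_\varphi\cdot\omega$ with $\omega=(\prod_i\tau_i)(\prod_j\overline{\tau'_j})$. The factor $\omega$ is norm-continuous on $U(A)$ and satisfies $\omega(1)=1$, so norm-continuity of $\chi$ forces $\det_\varphi$ to be norm-continuous in a neighborhood of the identity in $U_\to(A)$. But for $\varphi\neq 0$, the simplicity and infinite-dimensionality of $A$ provide projections $e_n\in\Proj(A)$ with $|\varphi([e_n])|\to\infty$, and the unitaries $u_n=e^{i\pi/\varphi([e_n])}e_n+(1-e_n)$ satisfy $\|u_n-1\|\to 0$ while $\det_\varphi(u_n)=-1$, a contradiction. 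Hence $\varphi=0$ and $(2)$ follows. The principal obstacle is the hard direction of $(1)$: one must transport Okounkov--Olshanski's asymptotic character theory from $U(\infty)=\varinjlim U(n)$ to the inductive system $\{\prod_i U(d_{n,i})\}$ arising from an arbitrary unital simple AF algebra, and identify the extremal coherent measures with the parameters $(\tau_i,\tau'_j,\varphi)$, using simplicity to link signature combinatorics with the trace simplex $T(A)$ and the dimension group $K_0(A)$.
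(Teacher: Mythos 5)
Your skeleton (ergodic method on the coherent system over the Bratteli diagram, asymptotics of normalized $U(d)$ characters, branching rules, a determinant part detected on $K_0$, and the norm-continuity argument for (2)) matches the paper's, and your deduction of (2) is essentially the paper's Lemma on $\det_\varphi$ (continuity forces $\varphi=0$). But there is a genuine gap at the heart of the hard direction of (1): you never explain why the Voiculescu-type continuous parameters are absent, i.e.\ why for an extreme $\chi$ the approximating signatures $\Lambda^{(n,i)}$ must eventually be a constant shift $a_{n,i}$ plus a pair of partitions $(\lambda^{(n,i)},\mu^{(n,i)})$ of \emph{uniformly bounded} total size, rather than having rows growing proportionally to $d_{n,i}$. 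The Okounkov--Olshanski limit formula you invoke (row frequencies $\lambda_i/d\to\alpha_i$, $\beta_j$, residual parameter) pertains to the corner embeddings $U(n)\subset U(n+1)$ and to evaluation at unitaries with finitely many eigenvalues different from $1$; for a unital simple AF algebra the connecting maps involve multiplicities, so the image of a fixed $u\in U(A_m)$ deep in the system has macroscopic eigenvalue multiplicities, and that asymptotic regime simply does not apply blockwise. The paper's actual key ingredient at this point is quantitative: a fourth-versus-second moment estimate for $\chi_\Lambda(e^{\sqrt{-1}tF})$ with a fixed $F$ of large support relative to each block, proved via the Harish-Chandra--Itzykson--Zuber integral (Lemmas 3.7--3.10), combined with the Okounkov--Olshanski tightness lemma (Lemma 2.6) to show the second moments stay bounded; boundedness then forces $\sum_i(|\lambda^{(n,i)}|+|\mu^{(n,i)}|)\leq 4C$, after which the branching rules stabilize the determinant part (giving $\varphi\in\Hom(K_0(A),\Z)$) and the sizes $p,q$, and the elementary estimate $|\chi_{\{\overline{\mu};\lambda\}}(U)-(\Tr(U)/d)^{|\lambda|}(\overline{\Tr(U)}/d)^{|\mu|}|\leq C'/d$ identifies the limit with $\det_\varphi$ times products of traces. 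Without some substitute for this moment control, your plan cannot rule out extreme characters with nontrivial frequency parameters, which is precisely what distinguishes this theorem from the $U(\infty)$ case.

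Two smaller points. Your appeal to simplicity, ``so that the relative block weights $d_{n,i}/\sum_j d_{n,j}$ are asymptotically governed by a single point of $T(A)$,'' is not correct and not what is needed: $T(A)$ can be an arbitrary metrizable Choquet simplex; what simplicity, unitality and infinite-dimensionality give (and what the proof uses) is only that $\min_i d_{n,i}\to\infty$, so that every block grows. Also, for the easy inclusion you assert factoriality of $\{\pi(u)=\bigotimes_i\pi_i(u)^{\otimes p_i}\otimes(J_i\pi_i(u)J_i)^{\otimes q_i}\}''$ by ``an ergodicity-of-product argument''; this is a real theorem in the paper (a Schur--Weyl duality for the hyperfinite II$_1$ factor, using hyperfiniteness of the GNS factors of an AF algebra together with the same character asymptotics), and your proposal does not supply an argument for it, though the intended statement is correct.
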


It is known that any metrizable Choquet simplex can be realized by $T(A)$ for a unital simple 
AF algebra $A$ (see \cite[Theorem 7.4.3]{Bl98}).

We can generalize Theorem \ref{main1} to a broader class of $C^*$-algebras as follows (see Section 6 for details). 
 
\begin{theorem} \label{main2} 
Let $A$ be a separable infinite dimensional unital simple exact $C^*$-algebra with tracial topological rank 0 
and torsion-free $K_0(A)$. \\
$(1)$ Let $U(A)_0$ be the connected component of $1_A$ in $U(A)$. Then 
$$\ex \Char(U(A)_0)=\{(\prod_{i=1}^p\tau_i)(\prod_{j=1}^q\overline{\tau'_j});\; 
\tau_i,\tau'_j\in \ex T(A),\; p,q\geq 0\}.$$
$(2)$ Let $\widehat{K_1(A)}=\Hom(K_1(A),\T)$ be the dual group of $K_1(A)$. Then 
$$\ex \Char(U(A))=\{\psi(\prod_{i=1}^p\tau_i)(\prod_{j=1}^q\overline{\tau'_j});\; 
\tau_i,\tau'_j\in \ex T(A),\; p,q\geq 0,\; \psi\in \widehat{K_1(A)}\},$$
where we identify $\psi\in \widehat{K_1(A)}$ with the homomorphism 
$U(A)\ni u\mapsto \psi([u])\in \T$. 
\end{theorem}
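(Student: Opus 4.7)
The strategy is to reduce Theorem \ref{main2} to Theorem \ref{main1} via an AF approximation inside $A$. The tracial topological rank zero hypothesis, combined with simplicity, exactness, and torsion-freeness of $K_0(A)$, should provide a unital AF subalgebra $B \subset A$ (with $1_B = 1_A$) such that the restriction map induces an affine homeomorphism $T(A) \xrightarrow{\sim} T(B)$, and such that the inductive limit group $U_\to(B) = \bigcup_n U(B_n)$, for an exhausting chain of finite-dimensional subalgebras $B_n \subset B$, is norm-dense in $U(A)_0$. Here the torsion-free $K_0$ hypothesis enters through the construction of $B$ (matching a piece of the Elliott invariant), while TR$=0$ provides the norm-density via the standard finite-dimensional approximation of elements of $A$ by elements sitting inside large corners.

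For part (1), take $\chi \in \ex\Char(U(A)_0)$. Its restriction $\chi|_{U_\to(B)}$ is a norm-continuous character on a dense subgroup and therefore determines $\chi$. By Theorem \ref{main1}(1), every character of $U_\to(B)$ is a convex combination of the extremals $\mathrm{det}_\varphi\prod_i\tau_i\prod_j\overline{\tau'_j}$. Since the inductive limit topology on $U_\to(B)$ is strictly stronger than the norm topology, the only such extremals that extend norm-continuously to $U(B)$ are those with $\varphi = 0$: indeed, $\mathrm{det}_\varphi$ for $\varphi\neq 0$ detects the topological difference and fails to be norm continuous. Hence $\chi|_{U_\to(B)}$ is a combination of pure trace products; extremality of $\chi$ on $U(A)_0$ forces it to be a single such product, and the identification $\ex T(B) \cong \ex T(A)$ delivers the stated form. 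The converse direction (that these products actually define indecomposable norm-continuous characters of $U(A)_0$) uses that $\ex\Char(U(A)_0)$ is closed under multiplication, a property already noted in the introduction.

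For part (2), I would use the exact sequence
\[
1 \to U(A)_0 \to U(A) \to K_1(A) \to 1,
\]
which holds under the present hypotheses (for instance via the de la Harpe--Skandalis determinant together with TR$=0$-type vanishing of $K_1$ obstructions). Since $K_1(A)$ is abelian and the indecomposable characters of $U(A)_0$ found in part (1) are products of tracial states, hence automatically invariant under conjugation by any element of $U(A)$, a standard Mackey-type argument for an abelian quotient shows that every indecomposable character of $U(A)$ decomposes uniquely as $\psi\cdot\chi_0$ where $\chi_0$ comes from part (1) and $\psi$ is pulled back from a character of $K_1(A)$, identified with $u\mapsto \psi([u])$.

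The main obstacle lies entirely in the first paragraph: the existence of the AF subalgebra $B\subset A$ with the simultaneous properties that (i) $T(A)\cong T(B)$ canonically, so that extremal traces transfer, and (ii) $U_\to(B)$ is norm-dense in $U(A)_0$, so that restriction is injective on characters. Property (ii) on its own is a routine consequence of approximation by finite-dimensional corners given by TR$=0$, but the simultaneous matching of tracial data in (i) requires a genuine $K$-theoretic input, and is the point at which the torsion-freeness of $K_0(A)$ and exactness are used substantively. Once $B$ is constructed, the rest of the argument is a clean transfer through Theorem \ref{main1} and a short Mackey analysis.
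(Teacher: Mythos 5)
Your reduction hinges on the claim that there is a unital AF subalgebra $B\subset A$ with $T(A)\cong T(B)$ and with $U_\to(B)$ \emph{norm-dense} in $U(A)_0$, and this claim is false unless $A$ is itself AF. Since $B$ is norm-closed in $A$ and $1_B=1_A$, the norm closure of $U_\to(B)$ inside $U(A)$ is contained in $U(B)=U(A)\cap B$; norm-density would therefore force $U(A)_0\subset B$, and since $A$ is the closed linear span of exponential unitaries this gives $A=B$ — impossible for the motivating examples such as the irrational rotation algebra. Tracial rank zero does \emph{not} produce one fixed AF subalgebra whose unitaries are norm-dense; it produces varying finite-dimensional approximations in large corners. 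What the hypotheses actually yield, and what the paper uses, is weaker but sufficient: torsion-freeness makes $K_0(A)$ a simple dimension group, giving a unital simple AF subalgebra $B\subset A$ with $K_0(B)\cong K_0(A)$ and hence $T(A)\cong T(B)$; then, by Lin's theorem, any $u\in U(A)_0$ is norm-close to a unitary with finite spectrum, and because projections with equal $K_0$-class are Murray--von Neumann equivalent one can conjugate its spectral projections into $B$ by some $w\in U(A)_0$. Thus there are $w_n\in U(A)_0$, $u_n\in U(B)$ with $\|w_nuw_n^*-u_n\|\to 0$, and since characters are class functions this determines $\omega(u)=\lim_n\omega(u_n)$ from $\omega|_{U(B)}$ — a conjugation device, not density (Lemma \ref{invaraince}). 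One then needs the unique integral decomposition of $\omega|_{U(B)}$ over $\ex\Char(U(B))$ (Corollary \ref{decomposition}) and the extension of each extreme point to $U(A)_0$ via $T(B)\cong T(A)$ to conclude Corollary \ref{component}. Your ``converse direction'' is also unsupported: indecomposability of $(\prod_i\tau_i)(\prod_j\overline{\tau'_j})$ on $U(A)_0$ is proved via the Schur--Weyl duality Theorem \ref{Schur-Weyl2} combined with the $\|\cdot\|_\tau$-density Lemma \ref{density} (hyperfiniteness of the trace representations coming from TR$=0$); the semigroup remark in the introduction is a consequence of these results, so invoking it is circular.

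For part (2), the conjugation-invariance you cite is the right input, but a ``standard Mackey-type argument'' does not exist off the shelf for these non-locally-compact groups and type II$_1$ factor representations. The paper has to prove that $N=\pi(U(A)_0)''$ is a factor (via Lemma \ref{invaraince}), treat the case $N=\C$ separately, and in the case $N\neq\C$ compare $\pi|_{U(A)_0}$ with the explicit representation $\sigma$ built from the GNS representations of the $\tau_i$, using Lemma \ref{density} to see $\sigma(U(A)_0)''=\sigma(U(A))''$, and then show that $v\mapsto\theta\circ\sigma(v^{-1})\pi(v)$ is a homomorphism into $N'\cap M$ which must be scalar because $M$ is a factor, hence a character of $K_1(A)$. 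Your sketch names the exact sequence and the invariance but omits this von Neumann algebraic mechanism, and in any case it presupposes your part (1), so the gap above propagates.
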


For a stable AF algebra $A$, we denote by $TW(A)$ the set of densely defined lower semi-continuous 
semifinite traces on $A$. 
All the traces in $TW(A)$ have a common dense domain, called the Pedersen ideal of $A$ (see \cite[Proposition 5.6.7]{P79}), 
and $TW(A)$ is closed under addition and multiplication by positive numbers. 
Since $A_n$ is included in the Pedersen ideal of $A$ for any $n$, 
the function $U_\to(A)\ni u\mapsto \tau(u-1)$ is continuous for any $\tau\in TW(A)$. 
For $\tau,\tau'\in TW(A)$ and $u\in U_\to(A)$, we set 
$\chi_{\tau,\tau'}(u)=e^{\tau(u-1)+\tau'(u^*-1)}$. 
The function $\tau(u-1)+\tau'(u^*-1)$ is conditionally positive definite 
in the following sense: for any complex numbers $c_1,c_2,\ldots,c_n$ with $\sum_{i=1}^nc_i=0$ and 
$u_1,u_2,\ldots,u_n\in U_\to(A)$, we have 
$$\sum_{i,j=1}^n(\tau(u_j^{-1}u_i-1)+\tau'((u_j^{-1}u_i)^*-1))c_i\overline{c_j}=\tau(x^*x)+\tau'(yy^*)\geq 0,$$
where $x=\sum_{i=1}^nc_i(u_i-1)$, $y=\sum_{i=1}^nc_i(u_i^*-1)$. 
Thus  $\chi_{\tau,\tau'}$ is a character of $U_\to(A)$ thanks to the well-known Schoenberg theorem.

\begin{theorem} \label{main3} Let $A$ be a stable simple AF algebra not isomorphic to $\K$. 
If $TW(A)$ is finite dimensional, 
$$\ex \Char(U_\to(A))=\{\mathrm{det}_\varphi\chi_{\tau,\tau'};\; \tau,\tau'\in TW(A),\; 
\varphi\in \Hom(K_0(A),\Z)\}.$$
\end{theorem}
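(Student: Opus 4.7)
The plan is to establish the two inclusions separately. For the easier inclusion that each $\mathrm{det}_\varphi\chi_{\tau,\tau'}$ lies in $\ex\Char(U_\to(A))$: the function $\chi_{\tau,\tau'}$ is a character by the Schoenberg computation already recorded in the excerpt, $\mathrm{det}_\varphi:U_\to(A)\to\T$ is a continuous homomorphism, and their product is therefore a character. Indecomposability of $\chi_{\tau,\tau'}$ follows by realizing its GNS representation as an exponential/Fock construction over the factor representations $\pi_\tau,\pi_{\tau'}$ of $A$ and checking that the generated von Neumann algebra is a factor, using simplicity of $A$; twisting by the one-dimensional character $\mathrm{det}_\varphi$ preserves indecomposability.

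For the hard inclusion, let $\chi\in \ex\Char(U_\to(A))$. Set $p_n=1_{A_n}$ and $B_n=p_nAp_n$; the hypothesis $A\not\cong\K$ guarantees that $B_n$ is an infinite-dimensional unital simple AF algebra for all large $n$ (otherwise $B_n$ would be a matrix algebra and the stability and simplicity of $A$ would force $A\cong\K$), and $U_\to(A)=\bigcup_n U_\to(B_n)$. Restricting $\chi$ to $U_\to(B_n)$ and applying Choquet decomposition with Theorem \ref{main1}(1) parametrizing the extreme points yields
\[
\chi|_{U_\to(B_n)}(u)=\int_{\Omega_n}\mathrm{det}_{\varphi_s}(u)\Big(\prod_{i=1}^{r_s}\sigma_{s,i}(u)\Big)\Big(\prod_{j=1}^{r'_s}\overline{\sigma'_{s,j}(u)}\Big)\,d\mu_n(s).
\]
Via the identification $T(B_n)\cong\{\tau\in TW(A):\tau(p_n)=1\}$, the tracial states $\sigma_{s,i},\sigma'_{s,j}$ range over a finite-dimensional simplex with finitely many extreme points $\tilde\tau_1,\ldots,\tilde\tau_N$ arising from the extreme rays of $TW(A)$.

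The next step is to pin down the determinant factor. Because $A$ is stable and simple, any two projections of the same $K_0$-class in $\bigcup_m A_m$ are conjugate by an element of $U_\to(A)$; conjugation invariance of $\chi$ combined with uniqueness of the Choquet decomposition forces $\varphi_s$ to agree $\mu_n$-almost surely with a fixed $\varphi\in \Hom(K_0(A),\Z)$ that is compatible across $n$. After extracting $\mathrm{det}_\varphi$, the remainder is a positive mixture of monomials $\prod_\ell \tilde\tau_\ell^{k_\ell}\,\overline{\tilde\tau_\ell}^{\,k'_\ell}$, encoded by a probability measure $\nu_n$ on $\Z_{\ge 0}^N\times \Z_{\ge 0}^N$.

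The main obstacle is the final ergodic step of identifying $\nu_n$. Indecomposability of $\chi$ on $U_\to(A)$, compatibility of $\nu_n$ with $\nu_{n+1}$ under the inclusion $B_n\hookrightarrow B_{n+1}$, and the fact that $\tau(p_n)\to\infty$ for every nonzero $\tau\in TW(A)$ (since $A$ is stable) should together force $\nu_n$ to be a product of independent Poisson distributions with parameters $\tau_\ell(p_n)$ and $\tau'_\ell(p_n)$ for deterministic tracial weights $\tau_\ell,\tau'_\ell\in TW(A)$; this is a Vershik--Kerov type ergodic argument adapted to the Okounkov--Olshanski $K$-theoretic framework of Theorem \ref{main1}. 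Once the Poisson parameters are identified, the generating-function identity $e^{\tau_\ell(u-p_n)}=e^{-\tau_\ell(p_n)}\sum_{k\ge 0}\tau_\ell(p_n)^k\tilde\tau_\ell(u)^k/k!$ recognizes $\chi|_{U_\to(B_n)}$ as the restriction of $\mathrm{det}_\varphi\chi_{\tau,\tau'}$ with $\tau=\sum_\ell\tau_\ell$ and $\tau'=\sum_\ell\tau'_\ell$, and passing to the union in $n$ completes the identification on all of $U_\to(A)$.
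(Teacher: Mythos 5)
Your overall architecture (write $U_\to(A)$ as an increasing union of groups $U_\to$ of unital simple AF corners, invoke Theorem \ref{main1} on those corners, and recognize the exponential via a Poisson-type generating function) is parallel to the paper's, and your reduction to corners $p_nAp_n$ is fine. But both halves of the theorem are left without their essential arguments. For the inclusion $\det_\varphi\chi_{\tau,\tau'}\in\ex\Char(U_\to(A))$ you assert that indecomposability follows by ``realizing the GNS representation as an exponential/Fock construction'' and ``checking factoriality using simplicity of $A$''; no such check is given, and simplicity is not the operative mechanism. This is half the theorem, and it is exactly where the paper works: one shows that any character dominated by a multiple of $\chi_{\tau,\tau'}$ has, on each corner $U_\to(B\otimes M_n(\C))$, coefficients $f_n(x,y)$ in a Poisson-mixture expansion of the restriction, that compatibility between corners makes $(f_n)$ a bounded harmonic sequence for a multivariate Poisson transition kernel, and that the tail boundary of that kernel is trivial (Lemma \ref{tail boundary}, resting on an explicit Stirling estimate), forcing $f_n$ to be constant and hence the dominated character to equal $\chi_{\tau,\tau'}$. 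An argument of comparable substance would be needed to make your factoriality claim stick; the proposal contains none.

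For the reverse inclusion you name the crucial step yourself (``should together force $\nu_n$ to be a product of independent Poisson distributions'') and leave it unproved; that is precisely where the content lies, and the paper's route avoids having to identify the decomposing measures at all. Instead of decomposing $\chi|_{U_\to(B_n)}$, the paper applies the Vershik--Kerov ergodic method (Theorem \ref{ergodic method}) to $G_n=U_\to(B\otimes M_n(\C))$ to obtain a single compatible sequence $\chi_m\in\ex\Char(G_m)$ converging to $\chi$; by Theorem \ref{main1}(1) each $\chi_m=\det_{\varphi_m}\prod_i\tau_{i,m}^{p_{i,m}}\overline{\tau_{i,m}}^{\,q_{i,m}}$, the identity $\tau_{i,m}(\Phi_{n,m}(u))=1+\tfrac{n}{m}\tau_{i,n}(u-1_n)$ shows $\varphi_m$ is constant and $p_{i,m},q_{i,m}$ increase, continuity of $\chi$ forces $p_{i,m}/m$ and $q_{i,m}/m$ to stay bounded (otherwise $\chi$ would be the discontinuous indicator of $\{1\}$), and subsequential limits $p_{i,m_k}/m_k\to a_i$, $q_{i,m_k}/m_k\to b_i$ yield $\chi=\det_\varphi\chi_{\tau,\tau'}$ directly, with no Poisson identification needed in this direction. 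Finally, your justification for the determinant factor being almost surely constant is a non sequitur: every character is a class function and every candidate extreme character, for every $\varphi$, is conjugation invariant, so conjugation invariance of $\chi$ (even combined with conjugacy of $K_0$-equivalent projections) places no constraint on $\varphi_s$; constancy must come from branching compatibility along the ergodic-method sequence (as in the paper) or from a sector-decomposition argument using extremality, which you do not supply.
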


As easy consequences of our main results (and their proofs), we are able to determine the indecomposable characters 
for the unitary groups of arbitrary type II$_1$ factors, and for a family of subgroups of the unitary groups of  
arbitrary type II$_\infty$ factors. 

\begin{theorem} \label{II1} Let $R$ be a type II$_1$ factor with a unique tracial state $\tau$. 
We equip the unitary group $U(R)$ of $R$ with the strong operator topology. 
Then 
$$\ex\Char(U(R))=\{\tau^p\overline{\tau}^q;\; p\geq 0,\; q\geq 0\}.$$ 
Moreover, every $\chi\in \Char(U(R))$ is uniquely decomposed as 
$$\chi=\sum_{p,q\in \Z_{\geq 0}}c_{p,q}\tau^p\overline{\tau}^q,\quad c_{p,q}\geq 0.$$
\end{theorem}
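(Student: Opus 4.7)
My plan is to deduce Theorem~\ref{II1} from Theorem~\ref{main1}(2) applied to a weakly dense CAR subalgebra of a hyperfinite subfactor $R_0$ of $R$, and then to transfer the resulting decomposition from $U(R_0)$ to $U(R)$ via the classification of $U(R)$-conjugacy classes of unitaries. The indecomposability of the candidate characters $\chi_{p,q}$ will emerge as a byproduct, together with a linear-independence observation.

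First I would check that each $\chi_{p,q}(u):=\tau(u)^p\overline{\tau(u)}^q$ is an SOT-continuous character of $U(R)$. Positive-definiteness of $\tau$ follows from
$$\sum_{i,j}c_i\overline{c_j}\,\tau(u_j^*u_i)=\tau(xx^*)\geq 0,\qquad x=\textstyle\sum_i c_iu_i,$$
and pointwise products of positive-definite functions are positive definite; SOT-continuity is immediate from normality of $\tau$ on bounded sets; conjugation-invariance holds because $\tau$ is a trace.

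For the decomposition claim, let $\chi$ be an arbitrary SOT-continuous character of $U(R)$. I would fix a unital CAR subalgebra $A\cong M_{2^\infty}\subset R$, built via iterated halving projections (available in any II$_1$ factor), and let $R_0:=A''\subset R$ be the resulting hyperfinite II$_1$ subfactor. The restriction $\chi|_{U(A)}$ is norm-continuous, since the norm topology on $U(A)$ is stronger than SOT; and because $A$ is a unital simple AF algebra with unique trace $\tau_A=\tau|_A$, Theorem~\ref{main1}(2) together with the Choquet decomposition in $\Char(U(A))$ gives
$$\chi|_{U(A)}=\sum_{p,q\geq 0}c_{p,q}\,\tau_A^p\,\overline{\tau_A}^q,\qquad c_{p,q}\geq 0,\ \textstyle\sum c_{p,q}=1.$$
Applying Kaplansky density to the self-adjoint $h$ in $u=e^{ih}\in U(R_0)$ and using SOT-continuity of the bounded continuous functional calculus, $U(A)$ is SOT-dense in $U(R_0)$, so the identity extends to $\chi|_{U(R_0)}=\sum c_{p,q}\,\tau^p\,\overline{\tau}^q$. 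To pass from $U(R_0)$ to $U(R)$, I would invoke the classical result that two unitaries in a II$_1$ factor are $U(R)$-conjugate iff they share the same $\tau$-spectral distribution; since $R_0$ is itself a II$_1$ factor, every Borel probability measure on $\T$ is realized by some unitary in $R_0$, so every $u\in U(R)$ is $U(R)$-conjugate to some element of $U(R_0)$. Conjugation-invariance of both $\chi$ and each $\tau^p\overline{\tau}^q$ then yields $\chi=\sum c_{p,q}\,\tau^p\,\overline{\tau}^q$ on all of $U(R)$.

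Uniqueness of the coefficients follows from linear independence of the monomials $\tau^p\overline{\tau}^q$ on $U(R)$, seen by evaluating at unitaries whose trace $\tau(u)$ sweeps out every value in the closed unit disk of $\C$. Indecomposability of each $\chi_{p,q}$ is then automatic: in any convex splitting $\chi_{p,q}=\lambda\phi+(1-\lambda)\psi$ with $\phi,\psi\in\Char(U(R))$ and $0<\lambda<1$, expanding $\phi$ and $\psi$ in the above form and appealing to uniqueness forces $\phi=\psi=\chi_{p,q}$. Conversely, if $\chi$ is indecomposable, the decomposition collapses to a single term, so the listed set exhausts $\ex\Char(U(R))$. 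The main obstacle I anticipate is the use of the conjugacy classification in a II$_1$ factor—classical but non-trivial; modulo it, the argument reduces cleanly to Theorem~\ref{main1}(2) and standard density/Kaplansky arguments, bypassing any Schur--Weyl-type commutant calculation for the tensor representations.
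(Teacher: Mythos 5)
Your argument up through the identity $\chi|_{U(R_0)}=\sum_{p,q}c_{p,q}\tau^p\overline{\tau}^q$ is sound and is essentially the paper's route: restrict to a CAR subalgebra, apply Theorem \ref{main1},(2) together with the unique decomposition of Corollary \ref{decomposition}, and extend to $U(R_0)$ by SOT-density and uniform convergence of the series. The genuine gap is the passage from $U(R_0)$ to $U(R)$. The ``classical result'' you invoke --- that two unitaries of a II$_1$ factor are conjugate in $U(R)$ if and only if they have the same $\tau$-spectral distribution --- is false: a conjugacy $wuw^*=v$ carries $\{u\}''$ onto $\{v\}''$, so two Haar unitaries, one generating a Cartan MASA and one generating a singular MASA (which exist already in $\cR_0$ by Popa), have identical distributions but are not conjugate. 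Worse, the consequence you actually need, namely that every $u\in U(R)$ is conjugate to an element of the fixed hyperfinite subfactor $R_0$, also fails: in $R=L(F_2)$ (which is within the scope of the theorem, since every II$_1$ factor has a unique tracial state by the Dixmier property) the generator MASA is maximal injective by Popa, so its Haar generator cannot be conjugated into any hyperfinite subfactor. So the step ``every $u\in U(R)$ is $U(R)$-conjugate to some element of $U(R_0)$'' cannot be justified.

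The step can be repaired, and this is exactly what the paper does by quoting the proof of Lemma \ref{invaraince}: only \emph{approximate} conjugacy is needed. Given $u\in U(R)$ and $\varepsilon>0$, the spectral theorem gives a finite-spectrum unitary $u'=\sum_i z_ip_i$ with $\|u-u'\|<\varepsilon$; choosing a partition of unity $\{q_i\}$ of projections in $R_0$ with $\tau(q_i)=\tau(p_i)$ and a unitary $w\in U(R)$ with $wp_iw^*=q_i$ (possible since equal-trace partitions of unity in a II$_1$ factor are simultaneously conjugate), one gets $\|wuw^*-u_0\|<\varepsilon$ with $u_0=\sum_i z_iq_i\in U(R_0)$. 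Conjugation invariance of $\chi$, its SOT-continuity (norm convergence suffices), and norm continuity of $\tau$ then give $\chi(u)=\lim_n\chi(u_n)=\sum_{p,q}c_{p,q}\tau(u)^p\overline{\tau(u)}^q$, after which your uniqueness argument (linear independence of $z^p\overline{z}^q$ on the closed disk, which is the range of $\tau$ on $U(R)$) and the derivation of extremality from uniqueness go through as written.
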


Let $M$ be a type II$_\infty$ factor with separable predual. 
We denote by $\tau_\infty$ the unique (up to scalar multiple) normal semifinite trace of $M$. 
For $1\leq p< \infty$, we set 
$$U(M)_p=\{u\in U(M);\; \|u-1\|_p<\infty \}.$$
where $\|x\|_p=\tau_\infty(|x|^p))^{1/p}$. 
Then $U(M)_p$ is a Polish group with an invariant metric $\|u-v\|_p$ for $u,v\in U(M)_p$. 
For $a,b\geq 0$ and $u\in U(M)_1$, we set $\chi_{a,b}(u)=e^{a\tau_\infty(u-1)+b\tau_\infty(u^*-1)}$. 
For $u\in U(M)_2$, we set $\chi_a(u)=e^{-a\|u-1\|_2^2}$, which was discussed in \cite{AM2012} 
(cf. \cite{B80}). 
For $1\leq q<p$, we have the inclusion relation $U(M)_q\subset U(M)_p$, and 
we have $\chi_{a,a}(u)=\chi_a(u)$ for $u\in U(M)_1$. 

\begin{theorem} \label{IIinfty} Let the notation be as above. 
\begin{itemize}
\item [(1)] $\ex\Char(U(M)_1)=\{\chi_{a,b};\; a,b\geq 0\}.$
\item [(2)] $\ex\Char(U(M)_p)=\{\chi_a;\; a\geq 0\}$ for $1<p\leq 2$. 
\item [(3)] $\ex\Char(U(M)_p)=\{1\}$ for $2\leq p$. 
\end{itemize}
\end{theorem}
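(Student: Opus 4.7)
The plan is to reduce the classification on $U(M)_p$ to characters of type II$_1$ corners of $M$ (settled by Theorem~\ref{II1}), and then derive the dependence on $p$ via a continuity argument.

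The starting point is a density lemma: for $u\in U(M)_p$ and $\epsilon>0$, the spectral projection $P_\epsilon:=E^u(\{z\in\T:|z-1|>\epsilon\})$ satisfies $\tau_\infty(P_\epsilon)\leq \epsilon^{-p}\|u-1\|_p^p<\infty$ by Chebyshev's inequality, and the unitary $u_\epsilon:=uP_\epsilon+(1-P_\epsilon)\in U(P_\epsilon MP_\epsilon)+(1-P_\epsilon)$ satisfies $\|u-u_\epsilon\|_p\to 0$ by dominated convergence. Consequently the ``finite'' unitaries $\bigcup_e(U(eMe)+(1-e))$, ranging over finite projections $e\in M$, are $p$-norm dense in $U(M)_p$ for every $p\in[1,\infty)$; since each such unitary lies in $U(M)_1$, the group $U(M)_1$ is also $p$-norm dense in $U(M)_p$ for every $p\geq 1$.

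For Part (1), take $\chi\in\ex\Char(U(M)_1)$; for each finite projection $e$, apply Theorem~\ref{II1} to the II$_1$ factor $eMe$ (with tracial state $\tau_e$) after restricting $\chi$ along the embedding $u\mapsto u+(1-e)$ to obtain the unique and canonical decomposition
\[
\chi(u+1-e)=\sum_{i,j\geq 0}c^{(e)}_{i,j}\,\tau_e(u)^i\overline{\tau_e(u)}^j,\qquad u\in U(eMe).
\]
Compatibility between $e\leq f$ (via $u\mapsto u+(f-e)$) translates to a \emph{binomial thinning} relation on the family $\{c^{(e)}_{i,j}\}_e$ with parameter $\alpha=\tau_\infty(e)/\tau_\infty(f)$. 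The main obstacle will be to show that extremality of $\chi$ forces this coherent family to have the \emph{Poisson} form $c^{(e)}_{i,j}=e^{-(a+b)N}(aN)^i(bN)^j/(i!j!)$ with $N=\tau_\infty(e)$, for some $a,b\geq 0$; I plan to derive this via the Vershik--Kerov ergodic method, using that the uniqueness clause of Theorem~\ref{II1} makes the map $\chi\mapsto\{c^{(e)}\}$ preserve extremality, and that the extreme coherent families under binomial thinning are precisely the Poisson distributions. The density lemma then promotes $\chi=\chi_{a,b}$ from finite unitaries to all of $U(M)_1$.

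For Parts (2) and (3), take $\chi\in\ex\Char(U(M)_p)$ with $p>1$. Since $U(M)_1$ is $p$-norm dense in $U(M)_p$ and the GNS representation $\pi_\chi$ is strongly continuous, $\pi_\chi(U(M)_1)''=\pi_\chi(U(M)_p)''$ remains a factor, so $\chi|_{U(M)_1}\in\ex\Char(U(M)_1)=\{\chi_{a,b}\}$ by Part (1); hence $\chi$ is the unique continuous extension of some $\chi_{a,b}$ to $U(M)_p$, and it remains to decide which $(a,b)$ admit such an extension. Testing with $u_n=\exp(is\epsilon_n P_n)$, where $\tau_\infty(P_n)=\epsilon_n^{-1}$, $\epsilon_n\to 0$, and $s>0$ is arbitrary, one obtains $\|u_n-1\|_p=O(\epsilon_n^{1-1/p})\to 0$ for $p>1$ while $\chi_{a,b}(u_n)\to e^{is(a-b)}$; varying $s$ then forces $a=b$ whenever $p>1$. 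When $a=b$, the identity $\chi_{a,a}(u)=e^{-a\|u-1\|_2^2}$ combined with the interpolation $\|u-v\|_2\leq 2^{1-p/2}\|u-v\|_p^{p/2}$, valid for unitaries and $p\leq 2$, shows that $\chi_a$ extends continuously to $U(M)_p$ for every $p\in(1,2]$. Finally, for $p>2$, choosing $\tau_\infty(P_n)=\epsilon_n^{-2}$ gives $\|u_n-1\|_p=O(\epsilon_n^{1-2/p})\to 0$ while $\|u_n-1\|_2^2=1$, so $\chi_a(u_n)\to e^{-a}$; compatibility with continuity at $1$ then forces $a=0$, whence $\chi\equiv 1$.
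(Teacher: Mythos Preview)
Your approach is essentially the same as the paper's: both reduce Part~(1) to the II$_1$ classification on corners (Theorem~\ref{II1}) together with the Poisson/binomial-thinning identification (this is exactly Lemma~\ref{tail boundary}, invoked in the paper via ``the same argument as in the proof of Theorem~\ref{main3}''), and both handle Parts~(2) and~(3) by restricting to the dense subgroup $U(M)_1$ and determining which $\chi_{a,b}$ extend continuously. Your spectral-projection density argument and your explicit test sequences for the continuity obstruction are a bit more transparent than the paper's corresponding lemma and its terse factorization $\chi_{a,b}=\chi_c\chi_{a-c,b-c}$, but the logical structure is identical.
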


In view of Theorem \ref{main1}, \ref{main2}, and \ref{II1}, it would be tempting to conjecture 
that any $\chi\in \ex \Char(U(A))$ is of the form 
$$\chi=\psi(\prod_{i=1}^p\tau_i)(\prod_{j=1}^q\overline{\tau'_j}),\quad 
\tau_i,\tau_j'\in \ex T(A),\; \psi\in \Hom(U(A),\T),$$ 
for any simple unital $C^*$-algebra $A$. 
The first test case beyond the class of $C^*$-algebras discussed here is the Jiang-Su algebra $\mathcal{Z}$ (see \cite{JS}), 
which is left as an open problem. 
Another possible challenge for the future is to determine $\ex\Char(U_\to(A))$ for natural non-simple AF algebras, such as 
the gauge invariant CAR algebra. 
Since  the GICAR has a quotient isomorphic to $\K+\C 1$, the classification list should be considerably complicated.

The authors would like to thank Benoit Collins for informing them of the formula Lemma \ref{IZ},(2). 

\section{Preliminaries}

\subsection{Characters and representations}
A representation $(\pi,\cH)$ of a topological group $G$ consists of a continuous homomorphism $\pi$ 
from $G$ to the unitary group $U(\cH)$ of a Hilbert space $\cH$ equipped with the strong operator topology. 
We often call $\pi$ a representation, and $\cH$ the representation space of $\pi$, which is sometimes denoted by 
$\cH_\pi$. 
A cyclic representation $(\pi, \cH,\Omega)$ of $G$ is a representation $(\pi,\cH)$ with a unit vector 
$\Omega\in \cH$ such that $\pi(G)\Omega$ is a total set in $\cH$.  

For a character $\chi\in \Char(G)$, there exists a unique (up to unitary equivalence) 
cyclic representation $(\pi_\chi,\cH_\chi,\Omega_\chi)$ of $G$ satisfying $\chi(g)=\inpr{\pi(g)\Omega}{\Omega}$. 
We call it the cyclic representation of $G$ associated with $\chi$. 
The von Neumann algebra $M=\pi_\chi(G)''$ is always finite, and $\tau(x)=\inpr{x\Omega_\chi}{\Omega_\chi}$ 
gives a faithful normal tracial state on $M$. 
The von Neumann algebra $M$ is a factor if and only if $\chi\in \ex \Char(G)$. 

If $(\pi,\cH)$ is a representation of $G$ generating a finite von Neumann algebra $M=\pi(G)''$ with 
a faithful normal trace $\tau$, then $\chi(g)=\tau\circ \pi(g)$ gives a character $\chi\in \Char(G)$, and 
$\pi$ is quasi-equivalent to $\pi_\chi$. 

We often use the following well known fact without mentioning it. 

\begin{lemma} Let $G$ be a topological group, let $H$ be a closed subgroup of $G$, and let $\chi\in \Char(G)$. 
Then the restriction $\pi_\chi|_{H}$ of $\pi_\chi$ to $H$ is quasi-equivalent to the cyclic representation 
$\pi_{\chi|_H}$ associated with the restriction $\chi|_H$ of $\chi$ to $H$. 
\end{lemma}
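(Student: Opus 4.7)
The plan is to realize $\pi_{\chi|_H}$ as the cyclic subrepresentation of $\pi_\chi|_H$ generated by $\Omega_\chi$, and then use the tracial and separating properties of $\Omega_\chi$ to upgrade this containment into a quasi-equivalence.

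First I would set $M=\pi_\chi(G)''$ and $N=\pi_\chi(H)''\subseteq M$. As observed just before the lemma, $\Omega_\chi$ is a cyclic and separating trace vector for $M$ with respect to the faithful normal tracial state $\tau(\cdot)=\inpr{\cdot\,\Omega_\chi}{\Omega_\chi}$. The key elementary observation is that $\Omega_\chi$ remains separating for the von Neumann subalgebra $N$, and that $\tau|_N$ is still a faithful normal tracial state on $N$ for which $\Omega_\chi$ is a trace vector (although no longer cyclic in general).

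Next, let $p\in N'$ denote the orthogonal projection of $\cH_\chi$ onto $\overline{N\Omega_\chi}$. The subrepresentation of $H$ on $p\cH_\chi$ obtained from $\pi_\chi|_H$ is cyclic with cyclic vector $\Omega_\chi$, and its diagonal matrix coefficient is $\inpr{\pi_\chi(h)\Omega_\chi}{\Omega_\chi}=\chi|_H(h)$. By uniqueness of the GNS construction, this subrepresentation is unitarily equivalent to $\pi_{\chi|_H}$, and in particular $\pi_{\chi|_H}(H)''$ is spatially isomorphic to $Np\subseteq B(p\cH_\chi)$.

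To finish, I would argue that the normal $*$-homomorphism $\rho:N\to Np$ defined by $\rho(x)=xp$ is an isomorphism, which at the von Neumann algebra level intertwines $\pi_\chi|_H$ with $\pi_{\chi|_H}$ and therefore yields quasi-equivalence. Surjectivity is automatic. For injectivity, if $xp=0$ for some $x\in N$, then $x\Omega_\chi=0$ since $\Omega_\chi\in p\cH_\chi$, and the separating property of $\Omega_\chi$ for $N$ forces $x=0$. The only point worth care is this separating property, but it is inherited directly from $M$; beyond this there is no substantive obstacle, the argument being the standard packaging of the fact that a finite von Neumann algebra with a faithful trace vector restricts nicely to subalgebras.
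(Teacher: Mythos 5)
Your proof is correct and follows essentially the same route as the paper: compress $\pi_\chi|_H$ to $\overline{N\Omega_\chi}$, identify the compression with $\pi_{\chi|_H}$ by GNS uniqueness, and show the induction map $N\ni x\mapsto xp$ is an isomorphism. The only (harmless) difference is in the injectivity step, where you apply the separating property of $\Omega_\chi$ for $N$ directly to an $x$ with $xp=0$, while the paper phrases the same fact via $\overline{N'\cK}=\cH_\chi$, i.e.\ the central support of $p$ in $N'$ being $1$.
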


\begin{proof} Let $M=\pi_\chi(G)''$, $N=\pi_\chi(H)''$, $\cK=\overline{N\Omega_\chi}$, and let $p$ be the 
projection from $\cH$ onto $\cK$. 
Then $p\in N'$, and $\pi_{\chi|_H}$ is unitarily equivalent to $(p\pi_\chi|_H,\cK)$. 
Since $\Omega_\chi$ is a separating vector for $M$, we have $\overline{N'\cK}\supset \overline{M'\cK}=\cH_\chi$. 
Thus the map $N\ni x\mapsto px\in pN$ is an isomorphism, which shows the statement. 
\end{proof}

\subsection{Stratila-Voiculescu AF algebras}
Let $G=\varinjlim G_n$ be an inductive limit of second countable compact groups with $G_0=\{e\}$. 
The main purpose of this section is to show that $\ex \Char(G)$ has a Polish topology and every $\omega\in \Char(G)$ 
has an integral expression $\omega=\int_{\ex \Char(G)}\chi d\nu(\chi)$ with a unique Borel probability measure $\nu$ on 
$\ex \Char(G)$. 
This was first proved by Voiculescu \cite{V76} for $U(\infty)$. 
Olshanski \cite{Ol2003} gave a new proof to Voiculescu's result, which in fact works in the general case. 
The main technical problem here is that $\Char(G)$ is not compact in general, and Olshanski gave a 
nice compactification, which is a Choquet simplex.  
We give another proof using the Stratila-Voiculescu AF algebra $\fA(G)$ of $G$, whose tracial simplex $T(\fA(G))$ 
is essentially the same as Olshanski's compactification. 

For the relationship between locally compact groups and $C^*$-algebras, the reader is referred to \cite{P79}. 
The group $C^*$-algebra $C^*(G_n)$ is the universal $C^*$-algebra of the Banach $*$-algebra $L^1(G_n)$. 
Let $\lambda^{(n)}$ be the left regular representation of $G_n$ on $L^2(G_n)$. 
We concretely realize $C^*(G_n)$ as the $C^*$-algebra generated by 
$\lambda^{(n)}(f)=\int_{G_n}f(g)\lambda^{(n)}_gdg$ for $f\in L^1(G_n)$, where $dg$ is the Haar measure of $G_n$. 
The group von Neumann algebra $\cL(G_n)$ is the von Neumann algebra generated by $\lambda^{(n)}_{G_n}$, 
the weak closure of $C^*(G_n)$ too, which coincides with the multiplier algebra of $C^*(G_n)$. 
For a Radon measure $\nu$ on $G_n$, we denote $\lambda^{(n)}(\nu)=\int_{G_n}\lambda^{(n)}d\nu(g)\in \cL(G_n)$. 
Let $\widehat{G_n}$ be the unitary dual of $G_n$. 
Then $C^*(G_n)$ (resp. $\cL(G_n)$) is isomorphic to the direct sum $\oplus_{\pi\in \widehat{G_n}}B(\cH_\pi)$ 
in the category of $C^*$-algebras (resp. von Neumann algebras). 
Since every representation of $C^*(G_n)$ is a direct sum of irreducible representations, it uniquely 
extends to a normal representation of $\cL(G_n)$. 

Since the restriction of $\lambda^{(n)}$ to $G_k$ for $k<n$ is quasi-equivalent to $\lambda^{(k)}$, 
we have a natural embedding $C^*(G_k)\subset \cL(G_k)\subset \cL(G_n)$, which is given by 
$\int_{G_k}f(h)\lambda^{(k)}_hdh \mapsto \int_{G_k}f(h)\lambda^{(n)}_hdh$ for $f\in L^1(G_k)$. 
Thus $\sum_{k=0}^nC^*(G_k)$ makes sense as a $*$-subalgebra of $\cL(G_n)$, and we denote by $\fA_n(G)$ its norm closure. 
We define $\fA(G)$ by the inductive limit $C^*$-algebra $\varinjlim \fA_n(G)$ of the inductive system 
$\{\fA_n(G)\}_{n=0}^\infty$. 
Since $\fA_n(G)$ is an AF algebra for each $n$, so is $\fA(G)$. 

For every representation of $G$, its restriction to $G_n$ induces a normal representation of $\cL(G_n)$, 
which further induces a representation of $\fA(G)$. 
On the other hand, every factor representation $\pi$ of $\fA(G)$ is induced by either a factor representation 
of $G$ as above, or by an irreducible representation of $G_n$, with possibly multiplicity, 
for some $n$ in the following sense. 
Let $J_m$ be the closed ideal generated by $\cup_{k=m+1}^\infty C^*(G_k)$, and let $n$ be the smallest integer 
with $\pi|_{J_n}= 0$. 
Then $\pi$ factors through $\fA(G)/J_n$, which is isomorphic to $\fA_n(G)$,  
and it comes from an irreducible representation of $G_n$ (\cite{SV75}). 

For any character $\chi$ of $G$, we can associate a normal tracial state $\tr_{\chi, n}$ of $\cL(G_n)$ by 
the relation $\tr_{\chi,n}(\lambda^{(n)}(\nu))=\int_{G_n}\chi(g)d\nu(g)$. 
Since $\tr_{\chi,n}$ is normal, the restriction of $\tr_{\chi,n}$ to $C^*(G_n)$ is a state too. 
Then $\tr_{\chi, n}$ is compatibly with the above inductive system, and we obtain a tracial state 
$\tr_{\chi}$ of $\fA(G)$. 
The cyclic representation $(\pi_\chi,\cH_\chi,\Omega_\chi)$ of $G$ associated with $\chi$ and 
the GNS cyclic representation $(\pi_{\tr_\chi},\cH_{\tr_\chi},\Omega_{\tr_\chi})$ of $\fA(G)$ associated with $\tr_\chi$ 
are identified via 
$$\int_{G_n}\pi_\chi(g)f(g)dg=\pi_{\tr_\chi}(\lambda^{(n)}(f)),\quad f\in L^1(G_n).$$

For $x\in \fA(G)$, we denote by $\widehat{x}$ the continuous function on $T(\fA(G))$ defined by 
$\widehat{x}(\tau)=\tau(x)$. 
The topology of $T(\fA(G))$ is the weakest topology making $\widehat{x}$ continuous for any $x\in \fA(G)$. 
We identify $\Char(G)$ with a subset of $T(\fA(G))$ by the correspondence $\chi\mapsto \tr_\chi$, 
and introduce the relative topology (and Borel structure) of $T(\fA(G))$ into $\Char(G)$. 
Then the topology of $\Char(G)$ is the weakest topology making 
$\chi\mapsto \int_{G_n} \chi(g)f(g)dg$ continuous for any $f\in L^1(G_n)$ and any $n$, 
that is, the relative topology of $\sigma(C_b(G),\cup_{n=0}^\infty L^1(G_n))$ (cf. \cite{HH05-1}).  
The inclusion $\Char(G)\subset T(\fA(G))$ provides us a nice compactification of $\Char(G)$, 
which allows us to apply Choquet theory.

We denote by $\fA(G)^{**}$ the second dual of $\fA(G)$, which is known to have a natural von Neumann algebra structure 
(see \cite{P79}). 
For a closed ideal $J$ of $\fA(G)$, we denote by $z_J$ the unit of the weak closure $J''\subset \fA(G)^{**}$ of $J$, 
which is a central projection in $\fA(G)^{**}$. 
In concrete terms, it is obtained as the strong limit in $\fA(G)^{**}$ of an approximate unit $\{u_k\}_{k=1}^\infty$ of $J$.  
We define a lower semicontinuous function $\widehat{z_J}$ on $T(\fA(G))$ by the pointwise limit of 
$\{\widehat{u_k}\}_{k=1}^\infty$, which does not depend on the particular choice of the approximate unit $\{u_k\}_{k=1}^\infty$.

\begin{lemma} Let $G=\varinjlim G_n$ be an inductive limit of second countable compact groups, and let 
$\fA(G)$ be the Stratila-Voiculescu AF algebra for $G$. 
\begin{itemize}
\item[(1)] $\Char(G)$ is a G$_\delta$ subset of $T(\fA(G))$, and in particular it is a Polish space. 
\item[(2)] $\ex \Char(G)=\Char(G)\cap \ex T(\fA(G))$. 
\item[(3)] For any character $\omega\in \Char(G)$, there exists a unique probability measure $\nu$ on $\ex \Char(G)$ 
satisfying 
$$\tr_\omega(x)=\int_{\ex \Char(G)}\tr_\chi(x) d\nu(\chi),\quad \forall x\in \fA(G).$$ 
Moreover, we have 
$$\omega(g)=\int_{\ex \Char(G)}\chi(g)d\nu(\chi),\quad \forall g\in G.$$ 
\end{itemize}
\end{lemma}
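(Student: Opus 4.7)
The plan is to identify, via the correspondence $\chi\mapsto \tr_\chi$, the subset $\Char(G)\subset T(\fA(G))$ with those $\tau$ whose restriction $\tau|_{C^*(G_n)}$ has norm $1$ for every $n$, and then to read off (1)--(3) from this characterization together with the fact that $T(\fA(G))$ is a metrizable Choquet simplex.

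First I would set up the characterization. Fix an increasing approximate unit $\{u_k^{(n)}\}_k$ of $C^*(G_n)$ and put $\mu_n(\tau):=\sup_k \tau(u_k^{(n)})=\|\tau|_{C^*(G_n)}\|$; as a supremum of continuous affine functions, $\mu_n$ is lower semi-continuous on $T(\fA(G))$. If $\tau=\tr_\chi$, approximating $\delta_e$ in $L^1(G_n)$ gives $\mu_n(\tau)=\chi(e)=1$. Conversely, since $C^*(G_n)\cong \bigoplus_{\pi\in\widehat{G_n}} B(\cH_\pi)$ is a $c_0$-direct sum of matrix algebras, $C^*(G_n)^{**}\cong \cL(G_n)$, so the condition $\mu_n(\tau)=1$ says exactly that the unique normal extension $\tilde\tau_n$ of $\tau|_{C^*(G_n)}$ to $\cL(G_n)$ is a tracial state. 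The associated $\chi_n(g):=\tilde\tau_n(\lambda^{(n)}_g)$ is then a continuous character of $G_n$, and approximating $\delta_g$ for $g\in G_n$ by $f_k\in L^1(G_n)$, combined with the identification $\lambda^{(n)}(f_k)=\lambda^{(n+1)}(f_k)$ inside $\fA(G)$, forces $\chi_{n+1}|_{G_n}=\chi_n$; hence the $\chi_n$ glue to a character $\chi\in \Char(G)$ with $\tr_\chi=\tau$.

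Assertion (1) then follows at once: each $\{\mu_n=1\}$ is $G_\delta$ in the Polish simplex $T(\fA(G))$ (write it as $\bigcap_k\{\mu_n>1-1/k\}$), and a countable intersection of $G_\delta$ sets is $G_\delta$, hence Polish. For (2), the same characterization exhibits $\Char(G)$ as a face of $T(\fA(G))$: given $\tau=t\tau_1+(1-t)\tau_2\in \Char(G)$ with $t\in(0,1)$ and $\tau_i\in T(\fA(G))$, the identity $1=t\mu_n(\tau_1)+(1-t)\mu_n(\tau_2)$ combined with $\mu_n\leq 1$ forces $\mu_n(\tau_i)=1$ for all $n$, so $\tau_i\in \Char(G)$; therefore $\ex\Char(G)=\Char(G)\cap \ex T(\fA(G))$.

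For (3), since $\fA(G)$ is a separable $C^*$-algebra, $T(\fA(G))$ is a metrizable Choquet simplex, so standard Choquet theory provides a unique probability measure $\nu$ on $\ex T(\fA(G))$ representing $\tr_\omega$. Monotone convergence along $\{u_k^{(n)}\}_k$ gives $\int \mu_n(\chi)\,d\nu(\chi)=\mu_n(\omega)=1$, so the face property forces $\nu$ to be supported on $\ex T(\fA(G))\cap \Char(G)=\ex \Char(G)$. Testing the identity $\tr_\omega(\lambda^{(n)}(f))=\int \tr_\chi(\lambda^{(n)}(f))\,d\nu(\chi)$ against $f\in L^1(G_n)$, applying Fubini, and using continuity of $\omega$ and of each $\chi$ on $G_n$ promote it to the pointwise formula on $G=\bigcup_n G_n$. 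The main technical subtlety throughout lies in the converse direction of the characterization---verifying that the normal extension really yields a continuous character of $G_n$ and that the family $\{\chi_n\}_n$ is automatically compatible; once this is in hand, the $G_\delta$, face, and Choquet-integral statements all reduce to routine manipulations with lower semi-continuous affine functions on a Choquet simplex.
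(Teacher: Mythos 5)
Your argument is correct, and it reaches the characterization of $\Char(G)$ inside $T(\fA(G))$ by a genuinely different route than the paper. The paper encodes membership in $\Char(G)$ through the ideals $J_n$ generated by $\bigcup_{k>n}C^*(G_k)$ and the lower semicontinuous functions $\widehat{z_{J_n}}$, and its converse direction is indirect: it first performs the extreme decomposition of the given trace in the simplex $T(\fA(G))$, invokes the Stratila--Voiculescu dichotomy (the GNS representation of an extreme trace either comes from a finite factor representation of $G$ or kills some $J_n$ and factors through $\fA_n(G)$) to see that the representing measure lives on traces of the form $\tr_\chi$, and then reassembles a continuous character by bounded convergence. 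You instead use the functions $\mu_n(\tau)=\|\tau|_{C^*(G_n)}\|$, i.e.\ non-degeneracy of $\pi_\tau$ on each $C^*(G_n)$, and prove the converse directly for an arbitrary trace with $\mu_n(\tau)=1$ for all $n$: the normal extension $\tilde\tau_n$ of $\tau|_{C^*(G_n)}$ to $\cL(G_n)=C^*(G_n)^{**}$ is a normal tracial state, $\chi_n(g)=\tilde\tau_n(\lambda^{(n)}_g)$ is a continuous central positive definite function, and compatibility follows by approximating $\delta_g$ in $L^1(G_n)$ and using that $\lambda^{(n)}(f)$ and $\lambda^{(n+1)}(f)$ are the same element of $\fA(G)$. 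This avoids both the extreme decomposition and the structure theory of factor representations of $\fA(G)$ at the characterization stage, so your proof of (1) and (2) is more self-contained, with Choquet theory entering only in (3) exactly as in the paper (representing measure on $\ex T(\fA(G))$, monotone convergence to push it onto $\ex\Char(G)$, Fubini plus continuity for the pointwise formula); since $\mu_n$, like $\widehat{z_{J_n}}$, is affine, lower semicontinuous and bounded by $1$, the $G_\delta$ and face arguments are word-for-word parallel. What the paper's setup buys in exchange is that the ideal-based functions $\widehat{z_J}$ are introduced once and reused later (e.g.\ in the Borel-subset argument of Section 5), whereas your $\mu_n$ serve only this lemma. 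The points you flag as the technical subtlety --- traciality and normality of $\tilde\tau_n$, continuity of $\chi_n$, and the compatibility of the family $\{\chi_n\}$ --- do all go through as sketched, so there is no gap.
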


\begin{proof} (1) Recall that $J_n$ is the closed ideal of $\fA(G)$ generated by $\cup_{k=n+1}^\infty C^*(G_k)$. 
Let $z_n=z_{J_n}\in \fA(G)^{**}$.
Then we have $0\leq \widehat{z_n}\leq 1$ and $\{\widehat{z_n}\}_{n=0}^\infty$ is a decreasing sequence. 
We claim 
\begin{equation}\label{characterization} 
\{\tr_{\chi}\in T(\fA(G));\; \chi\in \Char(G)\}
=\bigcap_{n=0}^\infty\{\tau \in T(\fA(G));\; \widehat{z_n}(\tau)=1\}.
\end{equation}
Since 
$$\{\tau \in T(\fA(G));\; \widehat{z_n}(\tau)=1\}=\bigcap_{m=1}^\infty
\{\tau \in T(\fA(G));\; \widehat{z_n}(\tau)>1-\frac{1}{m}\},$$
and $\widehat{z_n}$ is lower semicontinuous, the claim would imply that $\Char(G)$ is a G$_\delta$ 
subset of $T(\fA(G))$. 

Assume that $\tau_0\in T(\fA(G))$ does not belong to the right-hand side of (\ref{characterization}), 
and let $n$ be the smallest integer with $\widehat{z_n}(\tau_0)\neq 1$. 
Let $\tau_1(x)=\lim_{k\to\infty} \tau(x-xu_{n,k})/(1-\widehat{z_n}(\tau_0))$, 
where $\{u_{n,k}\}_{k=1}^\infty$ is a quasi-central approximate unit of $J_n$. 
Then $\tau_1$ is in $T(\fA(G))$ that factors through $\fA(G)/J_n$, and 
the GNS representation $\pi_{\tau_1}$ is contained in $\pi_{\tau_0}$. 
Since $\pi_{\tau_1}|_{J_n}=0$, the representation $\pi_{\tau_1}$ does not come from a representation of 
$G$, and neither does $\pi_{\tau_0}$.  
Hence the trace $\tau_0$ does not belong to the left-hand side. 

Assume that $\tau_2\in T(A)$ belongs to the right-hand side of (\ref{characterization}) now. 
Since $T(\fA(G))$ is a Choquet simplex, there exists a unique probability measure $\nu$ on $\ex T(\fA(G))$ 
satisfying $\tau_2=\int_{\ex T(A)}\tau d\nu(\tau)$. 
By monotone convergence theorem, we have 
$$1=\widehat{z_n}(\tau_2)=\int_{\ex T(\fA(G))}\widehat{z_n}(\tau)d\nu(\tau).$$ 
Thus $\nu$ is supported by 
$$C=\bigcap_{n=0}^\infty\{\tau \in \ex T(A);\; \widehat{z_n}(\tau)=1\}.$$
For any $\tau\in \ex T(A)$, the GNS representation $\pi_{\tau}$ comes from either an irreducible representation 
of $G_n$ for some $n$ or a finite factor representation of $G$. 
For $\tau\in C$, the former case does not occur, and there exists a unique character $\chi_{\tau}$ 
satisfying $\tau=\tr_{\chi_{\tau}}$.  
Let $\{f_{n,k}\}_{k=1}^\infty$ be an approximate unit of $C^*(G_n)$, and 
let $\tau_{2,n}$ be the normal extension of $\tau_2|_{C^*(G_n)}$ to $\cL(G_n)$. 
Then by the bounded convergence theorem, we have 
$$\tau_{2,n}(\lambda^{(n)}_g)=\lim_{k\to\infty}\tau_2(\lambda^{(n)}_gf_{n,k})=
\lim_{k\to\infty}\int_{C}\tau(\lambda^{(n)}_gf_{n,k})d\nu(\tau)=
\int_{C}\chi_{\tau}(g)d\nu(\tau).$$ 
We denote by $\chi(g)$ the function on $G$ defined by the last integral. 
The bounded convergence theorem implies that $\chi$ is continuous, and hence it is a character of $G$. 
Now we have $\tr_\chi=\tau_2$.  

(2)
Assume $\chi\in \ex \Char(G)$ and $\tr_\chi=(1-t)\tau_1+t\tau_2$ with $\tau_1,\tau_2\in T(\fA(G))$ and $0<t<1$. 
Then $(1-t)\widehat{z_n}(\tau_1)+t\widehat{z_n}(\tau_2)=1$ for all $n$, and $\widehat{z_n}(\tau_1)=\widehat{z_n}(\tau_2)=1$. 
This shows that $\tau_1$ and $\tau_2$ come from characters of $G$, and $\chi=\tau_1=\tau_2$.

(3) is already shown in the proof of (1). 
\end{proof}

The restriction $\chi|_{G_n}$ of $\chi\in \Char(G)$ to $G_n$ is decomposed as 
$$\chi(g)=\sum_{\pi\in \widehat{G_n}}c_{n,\pi}\frac{\Tr(\pi(g))}{\dim \pi},$$
with non-negative numbers $c_{n,\pi}$ satisfying $\sum_{\pi\in \widehat{G_n}}c_{n,\pi}=1$. 
We set $S_{n,\chi}=\{\pi\in \widehat{G_n};\; c_{n,\pi}\neq 0\}$. 

\begin{lemma}\label{kernel and quotient}
For $\chi\in \Char(G)$, the closed ideal $J_\chi=\ker \pi_{\tr_\chi}$ of $\fA(G)$ depends only on the sets $S_{n,\chi}$, $n\geq 0$. 
If $S_{n,\chi}$ is a finite set for any $n$, then the quotient algebra $\fA(G)/J_\chi$ is identified with the AF algebra 
$$\varinjlim \bigoplus_{\pi\in S_{n,\chi}} B(\cH_\pi).$$
For $\sigma\in S_{n-1,\chi}$ and $\pi\in S_{n,\chi}$, the connecting map from $B(\cH_\sigma)$ to $B(\cH_\pi)$ 
is given by the restriction $\pi|_{G_{n-1}}$. 
\end{lemma}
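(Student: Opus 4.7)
I would compute $J_\chi\cap \fA_n(G)$ at each finite stage $n$ and then use the AF-algebraic structure of $\fA(G)$ to piece these together into $J_\chi$.

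First, I would record the direct sum decompositions $C^*(G_n)=\bigoplus_{\pi\in\widehat{G_n}}B(\cH_\pi)$ (a $c_0$-direct sum of matrix algebras) and $\cL(G_n)=\bigoplus_{\pi\in\widehat{G_n}}B(\cH_\pi)$ (an $\ell^\infty$-direct sum) with minimal central projections $e_\pi$. Unpacking the definition of $c_{n,\pi}$ shows that the normal extension $\tr_{\chi,n}$ of $\tr_\chi|_{C^*(G_n)}$ to $\cL(G_n)$ is $\sum_{\pi}c_{n,\pi}\frac{\Tr(\cdot)}{\dim\pi}$, so its central support is $e_n:=\sum_{\pi\in S_{n,\chi}}e_\pi$ and its trace ideal in $\cL(G_n)$ equals $(1-e_n)\cL(G_n)=\bigoplus_{\pi\notin S_{n,\chi}}B(\cH_\pi)$.

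Second, checking on the dense subalgebra $\sum_{k=0}^n C^*(G_k)$ of $\fA_n(G)$ — using the explicit formula $\tr_\chi(\lambda^{(k)}(f))=\int_{G_k}\chi(g)f(g)dg$ on each generator — shows that $\tr_\chi|_{\fA_n(G)}=\tr_{\chi,n}|_{\fA_n(G)}$. Hence
$$J_\chi\cap \fA_n(G)=\{x\in \fA_n(G):\tr_\chi(x^*x)=0\}=\fA_n(G)\cap (1-e_n)\cL(G_n),$$
a subset depending only on $S_{n,\chi}$. Invoking the standard fact that every closed ideal $J$ of the AF algebra $\fA(G)=\varinjlim \fA_n(G)$ satisfies $J=\overline{\bigcup_n J\cap \fA_n(G)}$, we conclude that $J_\chi$ itself is determined by the sequence $\{S_{n,\chi}\}_{n\geq 0}$.

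Third, when every $S_{n,\chi}$ is finite the central projection $e_n$ is a finite sum of minimal central projections of $C^*(G_n)$ and thus lies in $C^*(G_n)\subset \fA_n(G)$; moreover the corner $e_nC^*(G_n)=\bigoplus_{\pi\in S_{n,\chi}}B(\cH_\pi)$ already sits inside $\fA_n(G)$. Therefore $x\mapsto e_n x$ defines a surjective $*$-homomorphism from $\fA_n(G)$ onto $\bigoplus_{\pi\in S_{n,\chi}}B(\cH_\pi)$ with kernel $J_\chi\cap \fA_n(G)$, so $\fA_n(G)/(J_\chi\cap \fA_n(G))\cong \bigoplus_{\pi\in S_{n,\chi}}B(\cH_\pi)$. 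Passing to the inductive limit yields $\fA(G)/J_\chi\cong \varinjlim\bigoplus_{\pi\in S_{n,\chi}}B(\cH_\pi)$. The connecting map is induced by $\fA_{n-1}(G)\subset \fA_n(G)$: an element of $B(\cH_\sigma)\subset \cL(G_{n-1})$ acts on each $\cH_\pi$, via the inclusion $\cL(G_{n-1})\subset \cL(G_n)$, as the block embedding dictated by $\pi|_{G_{n-1}}=\bigoplus_\sigma m_{\pi,\sigma}\sigma$; composing with the projection onto the $S_{n,\chi}$-summand recovers the asserted restriction map.

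The main technical hurdle is the identification $\tr_\chi|_{\fA_n(G)}=\tr_{\chi,n}|_{\fA_n(G)}$ in the second paragraph, since $\fA_n(G)$ is not weakly dense in $\cL(G_n)$ and the extension must be verified by direct computation on the generators; once this compatibility is in hand, the remainder of the argument reduces to routine computations in the direct sum decompositions and the AF-algebraic description of ideals.
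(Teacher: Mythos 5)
Your argument is correct and follows essentially the same route as the paper: reduce to the finite stages via $J_\chi=\varinjlim(J_\chi\cap\fA_n(G))$ and $\fA(G)/J_\chi\cong\varinjlim \fA_n(G)/(J_\chi\cap\fA_n(G))$, then identify the level-$n$ quotient with $\bigoplus_{\pi\in S_{n,\chi}}B(\cH_\pi)$ by passing to the normal extension on $\cL(G_n)$. The only (cosmetic) difference is that where the paper sandwiches $\pi_{\tr_\chi}(\fA_n(G))$ between $\pi_{\tr_\chi}(C^*(G_n))$ and the image of $\cL(G_n)$ under the normal extension of the representation, you cut by the central support projection $e_n$ of the normally extended trace.
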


\begin{proof} Since $\fA(G)$ is an inductive limit $C^*$-algebra, we have 
$J_\chi=\varinjlim (\fA_n(G)\cap J_{\chi})$ and $\fA(G)/J_\chi\cong \varinjlim (\fA_n(G)/(\fA_n(G)\cap J_\chi))$. 
Note that we have $C^*(G_n)\subset \fA_n(G)\subset \cL(G_n)$. 
Since $\pi_{\tr_\chi}|_{\fA_n(G)}$ extends to a normal representation $\pi_{\tr_\chi}^{(n)}$ of $\cL(G_n)$, and 
$$\pi_{\tr_{\chi}}(C^*(G_n))=\pi_{\tr_\chi}^{(n)}(\cL(G_n))\cong \bigoplus_{\pi\in S_{n,\chi}} B(\cH_\pi),$$
we get 
$$\fA_n(G)/(\fA_n(G)\cap J_\chi)\cong \pi_{\tr_{\chi}}(\fA_n(G))\cong \bigoplus_{\pi\in S_{n,\chi}} B(\cH_\pi).$$
\end{proof}

\subsection{Vershik-Kerov ergodic method}
The Vershik-Kerov's ergodic method introduced in \cite{VK81} is a powerful tool to investigate 
$\ex \Char(G)$ for an inductive limit group $G$, and it often provides an intuitive explanation to parameters in 
$\ex \Char(G)$. 
It is an easy consequence of the backward martingale convergence theorem (see \cite{K03}).  

\begin{theorem}[Vershik-Kerov] \label{ergodic method} 
Let $\{G_n\}_{n=0}^\infty$ be an inductive system of topological groups with countable $\ex\Char(G_n)$ for any $n$. 
We assume that any $\omega\in \Char(G_n)$ is uniquely decomposed as 
$$\omega=\sum_{\chi\in \ex \Char(G_n)}c_\chi\chi$$ 
with non-negative $c_\chi$ (and we say that $\omega$ contains $\chi$ if $c_\chi\neq 0$). 
Let $G=\varinjlim G_n$ be the inductive limit group. 
Then for any $\chi\in \ex\Char(G)$, there exist characters $\chi_n\in \ex \Char(G_n)$ such that 
the restriction of $\chi_{n+1}$ to $G_n$ contains $\chi_n$ for any $n$, and 
$\{\chi_n\}_{n=m}^\infty$ converges to $\chi$ uniformly on $G_m$ for any $m$. 
We may further assume that $\chi_n$ is contained in $\chi|_{G_n}$ for any $n$. 
\end{theorem}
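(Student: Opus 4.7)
The plan is to apply the backward martingale convergence theorem to a probability measure on the space of compatible paths in $\prod_n \ex\Char(G_n)$, in the spirit of \cite{VK81} and \cite{K03}.

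First, fix $\chi \in \ex\Char(G)$ and use the unique-decomposition hypothesis to write $\chi|_{G_n} = \sum_{\alpha} c_n(\alpha)\alpha$ with $c_n(\alpha) \ge 0$ and $\sum_\alpha c_n(\alpha) = 1$. Compatibility under further restriction yields $c_{n-1}(\beta) = \sum_\alpha c_n(\alpha) p(\alpha,\beta)$, where $p(\alpha,\beta)$ is the coefficient of $\beta \in \ex\Char(G_{n-1})$ in $\alpha|_{G_{n-1}}$. Regarding $p$ as a backward transition kernel, Kolmogorov's extension theorem on the countable product $\Omega = \prod_n \ex\Char(G_n)$ furnishes a probability measure $\mu_\chi$ supported on compatible paths $\omega = (\chi_n)_n$, i.e.\ those with $c_n(\chi_n) > 0$ and $p(\chi_n,\chi_{n-1}) > 0$, whose $n$-th marginal equals $c_n$.

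Next, I would set up the backward martingale. Fix $m$ and $g \in G_m$; for $n \ge m$ let $X_n(\omega) = \chi_n(g)$ and $\mathcal{F}_n = \sigma(\chi_n,\chi_{n+1},\ldots)$. The backward-Markov property and the definition of $p$ give that the conditional distribution of $\chi_m$ given $\mathcal{F}_n$ is precisely the decomposition of $\chi_n|_{G_m}$ into indecomposables, so $\mathbb{E}[X_m \mid \mathcal{F}_n] = \chi_n(g) = X_n$. The backward martingale convergence theorem therefore yields $X_n \to \mathbb{E}[X_m \mid \mathcal{F}_\infty]$ both $\mu_\chi$-a.s.\ and in $L^1$, where $\mathcal{F}_\infty = \bigcap_n \mathcal{F}_n$. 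The ergodic heart of the argument is to deduce that $\mathcal{F}_\infty$ is $\mu_\chi$-trivial from extremality of $\chi$: any $A \in \mathcal{F}_\infty$ with $0 < \mu_\chi(A) < 1$ would make $\mu_\chi(\cdot \mid A)$ and $\mu_\chi(\cdot \mid A^c)$ backward-Markov with the same kernel $p$, producing two distinct continuous characters of $G$ whose convex combination recovers $\chi$, contradicting indecomposability. Hence $\chi_n(g) \to \chi(g)$ $\mu_\chi$-a.s.\ for each fixed $g$.

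To conclude, a diagonal argument over a countable dense subset of $\bigcup_m G_m$ produces a single path $(\chi_n)_n$ on which $\chi_n(g) \to \chi(g)$ for all such $g$; such a path automatically satisfies that $\chi_n$ is contained in $\chi|_{G_n}$ by positivity of $c_n(\chi_n)$. To upgrade pointwise to uniform convergence on each $G_m$, I would exploit the countability of $\ex\Char(G_m)$: the pointwise convergence $\chi_n|_{G_m}(g) \to \chi|_{G_m}(g)$ combined with linear independence of the indecomposables $\alpha \in \ex\Char(G_m)$ forces pointwise convergence of the decomposition probabilities on $\ex\Char(G_m)$, and Scheff\'e's lemma upgrades this to total variation, hence to $\sup_{g \in G_m}|\chi_n(g) - \chi(g)| \to 0$. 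The main obstacle I anticipate is the tail-triviality step: one must verify carefully that conditioning on an $\mathcal{F}_\infty$-event preserves both the backward-Markov structure and the continuity needed to obtain honest elements of $\Char(G)$. This is precisely the ergodic-method linkage between extremality of $\chi$ and triviality of the tail $\sigma$-algebra, and is the non-formal heart of the argument.
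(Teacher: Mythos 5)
Your overall strategy---the path-space measure $\mu_\chi$, the reverse martingale, and the linkage between extremality and triviality of the tail---is exactly the backward-martingale argument the paper invokes (it gives no proof of this theorem, referring to \cite{K03}), and your construction of $\mu_\chi$ and the identity $\mathbb{E}[X_m\mid\mathcal{F}_n]=X_n$ are fine. One logical point needs repair before the main issue: the existence of the convex decomposition $\chi=\mu_\chi(A)\,\omega^A+\mu_\chi(A^c)\,\omega^{A^c}$ does not by itself contradict indecomposability, because the two conditional characters need not be distinct. Extremality rather forces $\omega^A=\chi$; one must then either deduce that $\mu_\chi(\cdot\mid A)=\mu_\chi$ (same kernel $p$, and same marginals by the uniqueness of the decomposition of $\chi|_{G_n}$, hence the same cylinder probabilities), so that $\mu_\chi(A)=\mu_\chi(A\mid A)=1$, or, more economically, note that $\mathbb{E}[X_m\mathbf{1}_A]=\mu_\chi(A)\chi(g)$ for every tail event $A$ already gives $\mathbb{E}[X_m\mid\mathcal{F}_\infty]=\chi(g)$ a.s., which is all the martingale limit requires.

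The genuine gap is in your last step. Pointwise convergence of $\chi_n|_{G_m}$ on a countable dense subset of $G_m$ does \emph{not} force convergence of the decomposition probabilities: take $G_m=\T$, $\chi_n(z)=z^{n!}$ and $\chi|_{G_m}=1$; then $\chi_n\to\chi$ at every root of unity (a dense set), the indecomposables $z\mapsto z^k$ are linearly independent, yet the coefficient vectors $\delta_{n!}$ do not converge to $\delta_0$ and the convergence is certainly not uniform. (Your diagonal argument also needs separability of each $G_m$, which is not among the hypotheses.) Both defects disappear if you run the reverse martingale directly on the coefficients rather than on the values $\chi_n(g)$: for fixed $m$ and $\beta\in\ex\Char(G_m)$, let $Y_n^\beta$ be the coefficient of $\beta$ in $\chi_n|_{G_m}$; uniqueness of the decompositions gives $\mathbb{E}[Y_n^\beta\mid\mathcal{F}_{n+1}]=Y_{n+1}^\beta$, so by backward martingale convergence together with the corrected triviality step, $Y_n^\beta\to c_m(\beta)$ $\mu_\chi$-a.s. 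Since $\ex\Char(G_m)$ is countable, this holds a.s. simultaneously for all $m$ and $\beta$; Scheff\'e then upgrades coordinatewise convergence of these probability vectors to $\ell^1$-convergence, and $\sup_{g\in G_m}|\chi_n(g)-\chi(g)|\le\sum_\beta|Y_n^\beta-c_m(\beta)|\to 0$ yields the uniform convergence, while the two containment statements hold a.s. because every cylinder whose $c_n$- or $p$-factor vanishes is $\mu_\chi$-null.
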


Assume that $G_n$ is a second countable compact group for any $n$, and $G_1=U(1)$, e.g. $G=U(\infty)$. 
Let $\{\chi_n\}_{n=0}^\infty$ be a sequence of indecomposable characters with $\chi_n\in \ex\Char(G_n)$ such that 
$\chi_n|_{G_{n-1}}$ contains $\chi_{n-1}$. 
To apply Vershik-Kerov ergodic method to a concrete situation, we have to figure out when 
the sequence $\{\chi_n\}_{n=0}^\infty$ converges to a character of $G$.  

The Fourier expansion $\chi_n(e^{it})=\sum_{k\in \Z}M^{(n)}(k)e^{ikt}$ of $\chi_n$ restricted to $G_1$ gives 
a sequence of finitely supported probability measures $\{M^{(n)}\}_{n=1}^\infty$ on $\Z$. 
Assume that $\{\chi_n\}_{n=0}^\infty$ converges. 
Then $\{M^{(n)}\}_{n=0}^\infty$ is a tight family.  
A crucial observation of Okounkov-Olshanski in \cite{OO98} says that more is true. 
Namely they showed that the second moment sequence is bounded in the case of $U(\infty)$ by using the following easy, 
but nevertheless crucial observation \cite[Lemma 5.2]{OO98}.  
For a natural number $p$, we denote 
$$\langle k^p \rangle_{M^{(n)}}=\sum_{k\in \Z}k^pM^{(n)}(k),$$
if it exists. 

\begin{lemma}\label{OO} Let $\{M^{(n)}\}_{n=1}^\infty$ be a tight family of probability measures on $\Z$ having 4-th moments. 
If the second moment sequence $\{\langle k^2\rangle_{M^{(n)}}\}_{n=1}^\infty$ diverges, then 
the sequence $\{\langle k^4\rangle_{M^{(n)}}/\langle k^2\rangle_{M^{(n)}}^2\}_{n=1}^\infty$ diverges too. 
\end{lemma}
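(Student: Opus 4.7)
The plan is to exploit tightness to localize the bulk of each measure on a fixed compact set $\{|k|\le R\}$, whose contribution to the second moment is trivially bounded, and then control the remaining tail via Cauchy-Schwarz, which produces exactly a fourth-moment factor.

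More precisely, I would fix an arbitrary $\varepsilon>0$ and, using tightness of $\{M^{(n)}\}$, choose $R=R(\varepsilon)>0$ such that $M^{(n)}(\{|k|>R\})\le \varepsilon$ for every $n$. I would then decompose
$$\langle k^2\rangle_{M^{(n)}}=\sum_{|k|\le R}k^2 M^{(n)}(k)+\sum_{|k|>R}k^2 M^{(n)}(k),$$
bound the first sum by $R^2$, and apply the Cauchy-Schwarz inequality to the tail:
$$\sum_{|k|>R}k^2 M^{(n)}(k)\le \Bigl(\sum_{|k|>R}k^4 M^{(n)}(k)\Bigr)^{1/2}\Bigl(\sum_{|k|>R}M^{(n)}(k)\Bigr)^{1/2}\le \sqrt{\varepsilon\,\langle k^4\rangle_{M^{(n)}}}.$$
Combining these gives $\langle k^2\rangle_{M^{(n)}}-R^2\le \sqrt{\varepsilon\,\langle k^4\rangle_{M^{(n)}}}$.

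Since $\langle k^2\rangle_{M^{(n)}}\to\infty$ by hypothesis, for all sufficiently large $n$ we have $\langle k^2\rangle_{M^{(n)}}-R^2\ge \tfrac12\langle k^2\rangle_{M^{(n)}}$, and squaring the previous inequality yields
$$\frac{\langle k^4\rangle_{M^{(n)}}}{\langle k^2\rangle_{M^{(n)}}^2}\ge \frac{1}{4\varepsilon}.$$
As $\varepsilon>0$ was arbitrary, this proves $\langle k^4\rangle_{M^{(n)}}/\langle k^2\rangle_{M^{(n)}}^2\to\infty$.

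There is no real obstacle here beyond choosing the right way to split the sum; the only thing to notice is that one \emph{cannot} simply use $k^4\ge k^2\cdot R^2$ on the tail, since that would only give a linear bound, whereas the statement requires a quadratic one. The Cauchy-Schwarz step is precisely what converts the small tail probability $\varepsilon$ into the factor $\sqrt{\varepsilon}$ that is strong enough to dominate the diverging $\langle k^2\rangle$.
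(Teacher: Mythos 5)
Your proof is correct. Note that the paper itself offers no proof of Lemma \ref{OO}: it is quoted from \cite[Lemma 5.2]{OO98}, so there is no internal argument to compare against. Your route --- cutting the second moment at a tightness radius $R$ and applying Cauchy--Schwarz to the tail, so that the tail mass $\varepsilon$ enters as $\sqrt{\varepsilon\,\langle k^4\rangle_{M^{(n)}}}$ and hence $\langle k^2\rangle_{M^{(n)}}-R^2\leq\sqrt{\varepsilon\,\langle k^4\rangle_{M^{(n)}}}$ --- is exactly the standard argument for this fact; it is the contrapositive form of a Paley--Zygmund type second-moment estimate, and your closing remark about why the crude bound $k^4\geq R^2k^2$ on the tail is insufficient is well taken. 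One cosmetic point: in the application (the proof of Theorem \ref{main1}) ``diverges'' is really used in the sense ``is unbounded''; your argument covers this as well, since it applies verbatim along any subsequence on which $\langle k^2\rangle_{M^{(n)}}\to\infty$, and the dependence of $R$ and of the threshold index on $\varepsilon$ causes no harm because for each fixed $\varepsilon$ you obtain $\liminf_n \langle k^4\rangle_{M^{(n)}}/\langle k^2\rangle_{M^{(n)}}^2\geq 1/(4\varepsilon)$ before letting $\varepsilon\to 0$.
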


In the case where we have an estimate $\langle k^4\rangle_{M^{(n)}}=O(\langle k^2\rangle_{M^{(n)}}^2)$, the above lemma implies that 
the second moment sequence $\{\langle k^2 \rangle_{M^{(n)}}\}_{n=0}^\infty$ is bounded whenever $\{\chi_n\}_{n=0}^\infty$ converges. 
For $U(\infty)$, Okounkov-Olshanski \cite{OO98} used shifted Schur functions to obtain the estimate.  
For the unitary groups of unital simple AF algebras, it is more convenient to use the Harish-Chandara-Itzykson-Zuber integral instead 
(see Section 3.3). 

\section{The characters of the unitary group $U(d)$}
In this section, we deduce properties of the characters of the unitary group $U(d)$ that are necessary 
for the proof of Theorem \ref{main1}. 
In particular, we give several asymptotic estimates of the characters of $U(d)$. 

For combinatorics associated with the representation theory of $U(d)$, we use the notation and convention in \cite{M95}. 
In particular we don't specify the number of variables for a symmetric homogeneous function $f$. 
We often identify a vector in $\C^d$ with the corresponding diagonal matrix in $M_d(\C)$. 
For a diagonal matrix $A\in M_d(\C)$, we denote 
$f(A)=f(A_{11},A_{22},\ldots,A_{dd})$. 
More generally, if $A\in M_d(\C)$ is a diagonalizable matrix with eigenvalues $\alpha_1,\alpha_2,\ldots,\alpha_d$, 
we denote $f(A)=f(\alpha_1,\alpha_2,\ldots,\alpha_d)$. 

A signature $\Lambda=(\Lambda_1,\Lambda_2,\ldots,\Lambda_d)$ is a tuple of integers satisfying $\Lambda_i\geq \Lambda_{i+1}$ 
for any $1\leq i\leq d-1$. 
It is well known that the set of the equivalence classes $\widehat{U(d)}$ of the irreducible representations of $U(d)$ is naturally 
in one-to-one correspondence with the set of signatures (see \cite{Z}). 
For $\Lambda$, we denote by $(\pi_{\Lambda},\cH_\Lambda)$ the corresponding irreducible representation. 
We recall the Weyl character formula and Weyl dimension formula: 
$$\Tr(\pi_\Lambda(x))=\frac{\det(x_i^{\Lambda_j+d-j})_{i,j}}{\prod_{1\leq i<j\leq d}(x_i-x_j)},\quad x=(x_1,x_2,\ldots,x_d)\in\T^d,$$
$$\dim \pi_\Lambda=\prod_{1\leq i<j\leq d}\frac{\Lambda_i-\Lambda_j+j-i}{j-i}.$$
We set 
$$\chi_{\Lambda}(U)=\frac{\Tr(\pi_\Lambda(U))}{\dim \pi_{\Lambda}}.$$
When there is a possibility of confusion, we use the notation $\pi^{(d)}_{\Lambda}$ and $\chi^{(d)}_\Lambda$ 
instead of $\pi_\Lambda$ and $\chi_\Lambda$. 
In what follows, we identify $\widehat{U(d)}$ with the set of signatures (of length $d$). 
Then we have $\ex\Char(U(d))=\{\chi_{\Lambda}\}_{\Lambda\in \widehat{U(d)}}$. 

A signature $\Lambda$ is characterized by a pair of partitions, or Yong diagrams, as follows. 
We choose $p,q\in \N$ satisfying $\Lambda_p>0\geq \Lambda_{p+1}$ and $\Lambda_{q-1}\geq 0>\Lambda_q$. 
Then $\lambda=(\Lambda_1,\Lambda_2,\ldots,\Lambda_p)$ and $\mu=(-\Lambda_d,-\Lambda_{d-1},\ldots,-\Lambda_q)$ are partitions, 
and $\Lambda$ is uniquely determined by the pair $(\lambda,\mu)$ and $d$. 
Following \cite{King75}, we use the notation $\{\overline{\mu};\lambda\}$ for $\Lambda$ too. 
When the negative part $\mu$ is empty, we have 
$\chi_{\{\overline{\emptyset};\lambda\}}(U)=s_\lambda(U)/s_\lambda(1_d)$ for $U\in U(d)$, where $s_\lambda$ is the Schur polynomial.

\subsection{Branching rules}

We regard $U(d_1)\times U(d_2)$ as a subgroup 
$$\{\left(
\begin{array}{cc}
u &0  \\
0 &v 
\end{array}
\right);\; 
u\in U(d_1),\; v\in U(d_2)\}$$
of $U(d_1+d_2)$. 

\begin{lemma} Let the notation be as above. 
\begin{itemize}
\item [(1)] Let $\{\overline{\mu_i};\lambda_i\}\in \widehat{U(d)}$ for $i=1,2,3$. 
If $\pi_{\{\overline{\mu_3};\lambda_3\}}$ is contained in $\pi_{\{\overline{\mu_1};\lambda_1\}}\otimes \pi_{\{\overline{\mu_2};\lambda_2\}}$, 
we have $|\lambda_3|\leq |\lambda_1|+|\lambda_2|$, $|\mu_3|\leq |\mu_1|+|\mu_2|$, and 
$|\lambda_3|-|\mu_3|=|\lambda_1|+|\lambda_2|-|\mu_1|-|\mu_2|$.
\item [(2)] Let $\{\overline{\mu};\lambda\}\in \widehat{U(d_1+d_2)}$, and let 
$\{\overline{\mu_i};\lambda_i\}\in \widehat{U(d_i)}$ for $i=1,2$. 
If $\pi_{\{\overline{\mu_1};\lambda_1\}}\times \pi_{\{\overline{\mu_2};\lambda_2\}}$ is contained in 
the restriction of $\pi_{\{\overline{\mu};\lambda\}}$ to $U(d_1)\times U(d_2)$, we have 
$|\lambda_1|+|\lambda_2|\leq |\lambda|$, $|\mu_1|+|\mu_2|\leq |\mu|$, and 
$|\lambda_1|+|\lambda_2|-|\mu_1|-|\mu_2|=|\lambda|-|\mu|$.
\end{itemize}
\end{lemma}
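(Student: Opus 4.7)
The plan is to deduce both parts from a single fact: the set of weights of $\pi_\Lambda$ lies in the convex hull of the Weyl-group orbit $\fS_d\cdot\Lambda\subset\Z^d$, together with the convexity of $x\mapsto\max(x,0)$ and the matching of central characters. The central character identity $\pi_\Lambda(z1_d)=z^{|\lambda|-|\mu|}$ will handle the equality constraint in each part, while the convex-hull description will handle the inequalities.

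The first step is to record a convexity lemma: for any $w=\sum_\sigma c_\sigma\sigma\Lambda$ in the convex hull,
$$\sum_i\max(w_i,0)\;\le\;\sum_\sigma c_\sigma\sum_i\max((\sigma\Lambda)_i,0)\;=\;|\lambda|,$$
since the positive-part sum is $\fS_d$-invariant and equals $|\lambda|$ on $\Lambda$ itself; applying the same bound to $-w$ gives $\sum_i\max(-w_i,0)\le|\mu|$. Consequently every weight of $\pi_\Lambda$ satisfies both bounds.

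For part (1), if $\pi_{\{\overline{\mu_3};\lambda_3\}}$ occurs in $\pi_{\{\overline{\mu_1};\lambda_1\}}\otimes\pi_{\{\overline{\mu_2};\lambda_2\}}$, the highest weight $\Lambda_3$ is in particular a weight of the tensor product, so by the tensor-product weight-space decomposition $\Lambda_3=w_1+w_2$ for some weights $w_i$ of $\pi_{\{\overline{\mu_i};\lambda_i\}}$. Applying the elementary inequality $\max(a+b,0)\le\max(a,0)+\max(b,0)$ coordinatewise and then the convexity lemma yields
$$|\lambda_3|\;=\;\sum_i\max((w_1+w_2)_i,0)\;\le\;\sum_i\max((w_1)_i,0)+\sum_i\max((w_2)_i,0)\;\le\;|\lambda_1|+|\lambda_2|,$$
and symmetrically $|\mu_3|\le|\mu_1|+|\mu_2|$. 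The equality $|\lambda_3|-|\mu_3|=(|\lambda_1|-|\mu_1|)+(|\lambda_2|-|\mu_2|)$ comes from the multiplicativity of central characters under tensor product.

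For part (2), the maximal tori of $U(d_1)\times U(d_2)$ and of $U(d_1+d_2)$ both coincide with $\T^{d_1+d_2}$, so if $\pi_{\{\overline{\mu_1};\lambda_1\}}\times\pi_{\{\overline{\mu_2};\lambda_2\}}$ appears in the restriction of $\pi_{\{\overline\mu;\lambda\}}$, the concatenation $(\Lambda_1,\Lambda_2)$ is literally a weight of $\pi_{\{\overline\mu;\lambda\}}$. The convexity lemma applied in the ambient $U(d_1+d_2)$ then gives $|\lambda_1|+|\lambda_2|\le|\lambda|$ and $|\mu_1|+|\mu_2|\le|\mu|$, and coordinate-sum invariance of weights yields the remaining equality. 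I expect the only real conceptual obstacle to be spotting this unified framework: once one realizes that the sum-of-positive-parts functional $\Lambda\mapsto|\lambda|$ is convex on the weight polytope, both parts collapse to a few lines, and neither Koike-type stable-limit formulas nor Littlewood-Richardson calculations are needed.
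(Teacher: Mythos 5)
Your argument is correct, but it is a genuinely different route from the paper's. The paper disposes of both parts by citing King's explicit branching formulas ((3.4), (3.5) and (5.12) in the reference \cite{King75}), i.e.\ the stable Littlewood--Richardson-type rules for rational representations of $U(d)$, from which the stated inequalities and the equality are read off. You instead use only two standard structural facts: that the weights of $\pi_\Lambda$ lie in the convex hull of the Weyl orbit $\mathfrak{S}_d\cdot\Lambda$, and that all weights of $\pi_\Lambda$ have the same coordinate sum (equivalently, the central character $z1_d\mapsto z^{|\lambda|-|\mu|}$). Since the functional $w\mapsto\sum_i\max(w_i,0)$ is convex and $\mathfrak{S}_d$-invariant with value $|\lambda|$ on the orbit, every weight $w$ of $\pi_\Lambda$ satisfies $\sum_i\max(\pm w_i,0)\leq|\lambda|$ resp.\ $|\mu|$; combining this with the subadditivity $\max(a+b,0)\leq\max(a,0)+\max(b,0)$ and the fact that $\Lambda_3$ (resp.\ the concatenation $(\Lambda_1,\Lambda_2)$, which is indeed a weight for the common diagonal torus) must occur as a weight of the ambient representation yields exactly the claimed inequalities, and the central character (or coordinate-sum) matching gives the equality. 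This is sound: the only inputs are the weight-polytope theorem and elementary convexity, so your proof is self-contained and avoids the rational Littlewood--Richardson combinatorics entirely. What the paper's citation buys in exchange is the full decomposition with multiplicities, which the authors reuse elsewhere (e.g.\ the exact description of which constituents occur, needed in Section 5 for the characterization of $\ex_\varphi^{p,q}\Char(U_\to(A))$); for the present lemma, where only the numerical constraints on $|\lambda|$ and $|\mu|$ are asserted, your lighter argument suffices and is arguably more transparent.
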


\begin{proof} (1) follows from \cite[(3.4),(3.5)]{King75}, and (2) follows from \cite[(5.12)]{King75}. 
\end{proof}

Repeated use of the above lemma implies the following. 

\begin{lemma}\label{branching rule}
Let  
$$A_1=\bigoplus_{j=1}^{N_1}A_{1,j}\subset A_2=\bigoplus_{k=1}^{N_2}A_{2,k}$$
be a unital inclusion of finite dimensional $C^*$-algebras with $A_{1,j}\cong M_{d_{1,j}}(\C)$ and 
$A_{2,k}\cong M_{d_{2,k}}(\C)$. 
We identify $U(A_i)$ with $\prod_{j=1}^{N_j}U(d_{i,j})$ for $i=1,2$.
Let 
$$\pi_i=\pi_{\{\overline{\mu^{(i,1)}};\lambda^{(i,1)}\}}\times 
\pi_{\{\overline{\mu^{(i,2)}};\lambda^{(i,2)}\}}\times \cdots \times 
\pi_{\{\overline{\mu^{(i,N_i)}};\lambda^{(i,N_i)}\}}\in \widehat{U(A_i)}.$$
If the restriction of $\pi_2$ to $U(A_1)$ contains $\pi_1$, then 
$$\sum_{j=1}^{N_1}|\lambda^{(1,j)}|\leq \sum_{k=1}^{N_2}|\lambda^{(2,k)}|,$$
$$\sum_{j=1}^{N_1}|\mu^{(1,j)}|\leq \sum_{k=1}^{N_2}|\mu^{(2,k)}|,$$
$$\sum_{j=1}^{N_1}|\lambda^{(1,j)}|-\sum_{j=1}^{N_1}|\mu^{(1,j)}|= \sum_{k=1}^{N_2}|\lambda^{(2,k)}|-\sum_{k=1}^{N_2}|\mu^{(2,k)}|.$$
\end{lemma}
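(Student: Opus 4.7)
The plan is to factor the inclusion $U(A_1)\subset U(A_2)$ through a convenient intermediate product group so that parts (1) and (2) of the preceding lemma can be applied one at a time. Let $(m_{j,k})$ be the Bratteli matrix of the unital inclusion $A_1\subset A_2$, so that each $A_{2,k}\cong M_{d_{2,k}}(\C)$ contains $m_{j,k}$ copies of $A_{1,j}$ and the unitality gives $d_{2,k}=\sum_j m_{j,k}d_{1,j}$. I introduce
$$B=\prod_{k=1}^{N_2}\prod_{j=1}^{N_1}U(d_{1,j})^{\times m_{j,k}},$$
which embeds in $U(A_2)=\prod_k U(d_{2,k})$ as a block-diagonal subgroup inside each $k$-factor, and which contains $U(A_1)=\prod_j U(d_{1,j})$ via the diagonal embedding that places each $u_j\in U(d_{1,j})$ into every one of its $\sum_k m_{j,k}$ slots.

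Since the restriction of $\pi_2$ to $U(A_1)$ factors through $B$, the assumed occurrence of $\pi_1$ in $\pi_2|_{U(A_1)}$ produces an irreducible $B$-subrepresentation of $\pi_2|_B$, which I label
$$\bigotimes_{k,j,l}\pi_{\{\overline{\nu^{(j,l,k)}};\kappa^{(j,l,k)}\}},\qquad 1\leq l\leq m_{j,k},$$
whose restriction to the diagonal $U(A_1)\subset B$ contains $\pi_1$. For each fixed $k$, a straightforward induction on the number of blocks extends part (2) of the preceding lemma from two summands to any finite number; applying this to $\prod_{j,l}U(d_{1,j})\hookrightarrow U(d_{2,k})$ yields, for every $k$,
\begin{align*}
\sum_{j,l}|\kappa^{(j,l,k)}|&\leq |\lambda^{(2,k)}|, & \sum_{j,l}|\nu^{(j,l,k)}|&\leq |\mu^{(2,k)}|, \\
\sum_{j,l}\bigl(|\kappa^{(j,l,k)}|-|\nu^{(j,l,k)}|\bigr)&=|\lambda^{(2,k)}|-|\mu^{(2,k)}|.
\end{align*}

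Next, for each $j$, the signature $\{\overline{\mu^{(1,j)}};\lambda^{(1,j)}\}$ must appear in the tensor product $\bigotimes_{k,l}\pi^{(d_{1,j})}_{\{\overline{\nu^{(j,l,k)}};\kappa^{(j,l,k)}\}}$ obtained via the diagonal embedding into the $j$-th factor of $U(A_1)$. Iterating part (1) of the preceding lemma over the tensor factors gives
\begin{align*}
|\lambda^{(1,j)}|&\leq\sum_{k,l}|\kappa^{(j,l,k)}|, & |\mu^{(1,j)}|&\leq\sum_{k,l}|\nu^{(j,l,k)}|, \\
|\lambda^{(1,j)}|-|\mu^{(1,j)}|&=\sum_{k,l}\bigl(|\kappa^{(j,l,k)}|-|\nu^{(j,l,k)}|\bigr).
\end{align*}
Summing these three relations over $j$ and then combining with the $k$-summed bounds above yields the three assertions of the lemma.

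The argument is essentially multi-index bookkeeping, so I do not expect a serious obstacle. The only work beyond the two-factor case already handled is the inductive extension of each part of the preceding lemma to finitely many factors, which is immediate. The mildest subtlety is the coherent choice of the intermediate signatures $(\nu^{(j,l,k)},\kappa^{(j,l,k)})$ realizing both containments simultaneously, but this is automatic since restriction from $U(A_2)$ to $U(A_1)$ factors through $B$, so any irreducible subrepresentation of $\pi_2|_{U(A_1)}$ sits inside at least one irreducible summand of $\pi_2|_B$.
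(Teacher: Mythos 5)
Your proposal is correct and is essentially the paper's own argument: the paper proves this lemma simply by ``repeated use'' of the preceding lemma, i.e.\ factoring the inclusion $U(A_1)\subset U(A_2)$ through block-diagonal embeddings (handled by part (2)) and diagonal/multiplicity embeddings (handled by part (1), since restriction to the diagonal becomes a tensor product), exactly as you spell out with the intermediate group $B$ and the multi-index bookkeeping.
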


\subsection{Asymptotic estimate}
We denote by $p_r$ the $r$-th power sum $\sum_i x_i^r$. 

\begin{lemma} Let $\lambda$ be a partition of a natural number $n$. 
Then there exist numbers $c^\lambda_{i_1,i_2,\ldots,i_n}$ depending only on $\lambda$ satisfying 
$$s_\lambda=\sum_{i_1+2i_2+\cdots+ni_n=n}c^\lambda_{i_1,i_2,\ldots,i_n}p_1^{i_1}p_2^{i_2}\cdots p_n^{i_n}.$$
Moreover, 
$$c^{\lambda}_{n,0,\ldots,0}=\prod_{1\leq i<j\leq l(\lambda)}\frac{\lambda_i-\lambda_j+j-i}{j-i}
\prod_{1\leq i\leq l(\lambda)}\frac{(l(\lambda)-i)!}{(l(\lambda)+\lambda_i-i)!}.$$ 
\end{lemma}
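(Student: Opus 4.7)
The plan is to invoke the classical Frobenius formula expressing Schur functions in terms of power sums. For the existence of the expansion, I note that both $\{s_\lambda\}_{\lambda\vdash n}$ and $\{p_1^{i_1}p_2^{i_2}\cdots p_n^{i_n} : i_1+2i_2+\cdots+ni_n=n\}$ are $\Q$-bases of the space of homogeneous symmetric functions of degree $n$ (see \cite{M95}). Hence there exist uniquely determined rational coefficients $c^\lambda_{i_1,\ldots,i_n}$ as in the statement, and they are intrinsic to $\lambda$ because $p_r$ and $s_\lambda$ live in the inverse limit ring of symmetric functions (so the expansion does not depend on the number of variables).

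To compute the coefficient of $p_1^n$, I would apply Frobenius's formula
\begin{equation*}
s_\lambda = \sum_{\mu \vdash n}\frac{\chi^\lambda(\mu)}{z_\mu}\,p_\mu,
\end{equation*}
where $\chi^\lambda$ is the irreducible character of $\fS_n$ labeled by $\lambda$, $z_\mu$ is the size of the centralizer of a permutation of cycle type $\mu$, and $p_\mu = p_{\mu_1}p_{\mu_2}\cdots$. Only $\mu=(1^n)$ contributes to the $p_1^n$ term; since $z_{(1^n)} = n!$ and $\chi^\lambda(1^n) = f^\lambda$, the dimension of the Specht module $S^\lambda$, one obtains $c^\lambda_{n,0,\ldots,0}=f^\lambda/n!$.

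The final step is to rewrite $f^\lambda/n!$ in the prescribed form. Using the Frame--Robinson--Thrall formula
\begin{equation*}
f^\lambda=\frac{n!\,\prod_{1\le i<j\le\ell}(\lambda_i-\lambda_j+j-i)}{\prod_{i=1}^{\ell}(\lambda_i+\ell-i)!},\qquad \ell=l(\lambda),
\end{equation*}
(deducible from the hook length formula $f^\lambda=n!/\prod_{x\in\lambda}h(x)$ via the standard product rewriting of $\prod h(x)$, or directly from the Weyl denominator identity) together with the elementary Vandermonde-type identity $\prod_{1\le i<j\le\ell}(j-i)=\prod_{i=1}^{\ell}(\ell-i)!$, I distribute the two Vandermonde-like products appropriately to produce exactly
\begin{equation*}
\frac{f^\lambda}{n!}=\prod_{1\le i<j\le\ell}\frac{\lambda_i-\lambda_j+j-i}{j-i}\prod_{i=1}^{\ell}\frac{(\ell-i)!}{(\ell+\lambda_i-i)!}.
\end{equation*}
The only difficulty is algebraic bookkeeping when matching the two forms of the Vandermonde products; there is no conceptual obstacle, and the argument is independent of the operator-algebraic setting of the rest of the paper.
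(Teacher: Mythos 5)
Your proof is correct, but it takes a genuinely different route from the paper. The paper never identifies $c^\lambda_{n,0,\ldots,0}$ intrinsically: it evaluates $s_\lambda$ at $1_d$, notes that $p_1^{i_1}\cdots p_n^{i_n}(1_d)=d^{i_1+\cdots+i_n}$ so that $s_\lambda(1_d)=c^\lambda_{n,0,\ldots,0}d^n+O(d^{n-1})$, computes $s_\lambda(1_d)$ exactly via the Weyl dimension formula, and matches leading terms as $d\to\infty$. You instead identify the coefficient exactly as $c^\lambda_{n,0,\ldots,0}=\chi^\lambda(1^n)/z_{(1^n)}=f^\lambda/n!$ via the Frobenius expansion $s_\lambda=\sum_{\mu\vdash n}z_\mu^{-1}\chi^\lambda(\mu)p_\mu$, and then convert $f^\lambda/n!$ to the stated product using the determinantal (Frame--Robinson--Thrall) formula together with $\prod_{1\le i<j\le\ell}(j-i)=\prod_{i=1}^{\ell}(\ell-i)!$; I checked the bookkeeping and it does produce exactly the displayed expression. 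Your route buys more: it shows $c^\lambda_{n,0,\ldots,0}=\dim\Pi_\lambda/n!$ on the nose (which meshes well with the appearance of $\dim\Pi_\lambda$ in the Harish-Chandra--Itzykson--Zuber expansion, Lemma \ref{IZ},(2)), and it also supplies the existence/uniqueness of the expansion, which the paper simply asserts; the cost is importing the Frobenius character formula and the symmetric-group dimension formula, whereas the paper gets by with only the Weyl dimension formula it has already stated, at the price of an asymptotic rather than exact argument. The one point worth stating explicitly in your write-up is that the expansion is taken in the ring of symmetric functions in infinitely many variables (so the $c^\lambda_{i_1,\ldots,i_n}$ depend only on $\lambda$) and then specialized to $d$ variables, since for $d<n$ the power sums $p_1,\ldots,p_n$ in $d$ variables are not algebraically independent; you do note this, and it is all that is needed for Corollary \ref{asymp1}.
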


\begin{proof} It suffices to show the second statement. 
We consider the case where the number of the variables $d$ is sufficiently large.  
On one hand, we have 
$$s_\lambda(1_d)=c^\lambda_{n,0,\ldots,0}d^n+O(d^{n-1}),\quad (d\to\infty).$$
On the other hand, the Weyl dimension formula implies  
\begin{align*}
\lefteqn{s_\lambda(1_d)=
\prod_{1\leq i<j\leq l(\lambda)}\frac{\lambda_i-\lambda_j+j-i}{j-i}
\prod_{1\leq i\leq l(\lambda)<j\leq d}\frac{\lambda_i+j-i}{j-i}   } \\
&=\prod_{1\leq i<j\leq l(\lambda)}\frac{\lambda_i-\lambda_j+j-i}{j-i}
\prod_{1\leq i\leq l(\lambda)}\frac{(d+\lambda_i-i)!(l(\lambda)-i)!}{(l(\lambda)+\lambda_i-i)!(d-i)!}\\
 &=d^n\prod_{1\leq i<j\leq l(\lambda)}\frac{\lambda_i-\lambda_j+j-i}{j-i}
\prod_{1\leq i\leq l(\lambda)}\frac{(l(\lambda)-i)!}{(l(\lambda)+\lambda_i-i)!}+O(d^{n-1}),
\end{align*}
which shows the statement. 
\end{proof}

\begin{cor}\label{asymp1} For a partition $\lambda$, there exist positive constants $C_\lambda$ and $C'_\lambda$ 
depending only on $\lambda$ such that for any $U\in U(d)$ with $l(\lambda)\leq d$ we have  
$$|s_\lambda(U)-c^\lambda_{|\lambda|,0,\ldots,0}\Tr(U)^{|\lambda|}|\leq C_\lambda d^{|\lambda|-1},$$
$$|\frac{s_\lambda(U)}{s_\lambda(1_d)}-(\frac{1}{d}\Tr(U))^{|\lambda|}|\leq \frac{C'_\lambda}{d}.$$
\end{cor}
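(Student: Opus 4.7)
The plan is to derive both inequalities directly from the expansion of $s_\lambda$ in power sums provided by the preceding lemma.

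For the first inequality, I would write
\[
s_\lambda(U) - c^\lambda_{n,0,\ldots,0}\Tr(U)^n = \sum_{(i_1,\ldots,i_n)\neq (n,0,\ldots,0)} c^\lambda_{i_1,\ldots,i_n}\, p_1(U)^{i_1}p_2(U)^{i_2}\cdots p_n(U)^{i_n},
\]
where $n=|\lambda|$ and the sum runs over tuples with $i_1+2i_2+\cdots+ni_n=n$. Since $U$ has $d$ eigenvalues of modulus $1$, we have $|p_r(U)|\leq d$ for every $r$, and hence
\[
|p_1(U)^{i_1}\cdots p_n(U)^{i_n}|\leq d^{i_1+i_2+\cdots+i_n}.
\]
For any tuple other than the leading one, at least one $i_r$ with $r\geq 2$ is positive, so $i_1+i_2+\cdots+i_n \leq n-1$. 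Taking absolute values and setting $C_\lambda=\sum|c^\lambda_{i_1,\ldots,i_n}|$ (summed over non-leading tuples) gives the first inequality.

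For the second inequality I would use the first to control both $s_\lambda(U)$ and $s_\lambda(1_d)$. Writing $R(U)=s_\lambda(U)-c^\lambda_{n,0,\ldots,0}\Tr(U)^n$, a short algebraic manipulation gives
\[
\frac{s_\lambda(U)}{s_\lambda(1_d)}-\Bigl(\frac{\Tr U}{d}\Bigr)^n
=\frac{d^n R(U)-\Tr(U)^n R(1_d)}{s_\lambda(1_d)\,d^n}.
\]
Applying $|R(U)|,|R(1_d)|\leq C_\lambda d^{n-1}$ and $|\Tr U|\leq d$, the numerator is bounded by $2C_\lambda d^{2n-1}$, so the expression is bounded by $2C_\lambda d^{n-1}/s_\lambda(1_d)$. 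It remains to bound $s_\lambda(1_d)$ from below by a constant multiple of $d^n$. The previous lemma already shows $s_\lambda(1_d)=c^\lambda_{n,0,\ldots,0}d^n+O(d^{n-1})$ with positive leading coefficient; equivalently the first inequality applied at $U=1_d$ yields $s_\lambda(1_d)\geq c^\lambda_{n,0,\ldots,0}d^n-C_\lambda d^{n-1}$, which is at least $\tfrac{1}{2}c^\lambda_{n,0,\ldots,0}d^n$ once $d\geq 2C_\lambda/c^\lambda_{n,0,\ldots,0}$. Combining these bounds gives the desired $O(1/d)$ estimate for all sufficiently large $d$.

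For the finitely many remaining values of $d$ with $l(\lambda)\leq d < 2C_\lambda/c^\lambda_{n,0,\ldots,0}$, both $s_\lambda(U)/s_\lambda(1_d)$ and $(\Tr U/d)^n$ have modulus at most $1$, so their difference is bounded by $2$; enlarging $C'_\lambda$ so that $C'_\lambda/d\geq 2$ on this finite range accommodates those cases. I do not anticipate a genuine obstacle: the computation is essentially bookkeeping, and the only care needed is to make the lower bound on $s_\lambda(1_d)$ uniform in $d\geq l(\lambda)$, which is handled by the trivial bound in the low range.
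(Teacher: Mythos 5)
Your argument is correct and follows the route the paper intends: the corollary is stated without proof precisely because it follows from the power-sum expansion of $s_\lambda$ in the preceding lemma via $|p_r(U)|\leq d$ (giving the non-leading terms order $d^{|\lambda|-1}$) together with the asymptotics $s_\lambda(1_d)=c^\lambda_{|\lambda|,0,\ldots,0}d^{|\lambda|}+O(d^{|\lambda|-1})$ with positive leading coefficient, which is exactly your computation. Your handling of small $d$ via the trivial bound $|s_\lambda(U)/s_\lambda(1_d)|\leq 1$ (it is a normalized character) is a fine way to make the constant uniform over all $d\geq l(\lambda)$.
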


\begin{lemma}\label{asymp2} For two partitions $\lambda$ and $\mu$, there exist positive constants $C_{\lambda,\mu}$ and $C'_{\lambda,\mu}$ 
depending only on $\lambda$ and $\mu$ such that for any $U\in U(d)$ we have 
$$|\Tr(\pi_{\{\overline{\mu};\lambda\}}(U))-\overline{s_{\mu}(U)}s_\lambda(U)|\leq C_{\lambda,\mu}d^{|\lambda|+|\mu|-2},$$
$$|\chi_{\{\overline{\mu};\lambda\}}(U)-(\frac{\Tr(U)}{d})^{|\lambda|}(\frac{\overline{\Tr(U)}}{d})^{|\mu|}|\leq \frac{C'_{\lambda,\mu}}{d}.$$
\end{lemma}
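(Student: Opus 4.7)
The plan is to express $s_\lambda(U)\overline{s_\mu(U)}$ as the character of the tensor product $\pi_\lambda\otimes \pi_\mu^*$, decompose it into irreducibles, isolate the summand $\pi_{\{\overline{\mu};\lambda\}}$, and control the remainder via the Weyl dimension formula. Concretely, I would invoke the Newell--Littlewood (Koike) identity
$$s_\lambda(U)\,\overline{s_\mu(U)} \;=\; \sum_{\alpha,\beta,\tau}c^\lambda_{\alpha,\tau}\,c^\mu_{\beta,\tau}\,\Tr\bigl(\pi_{\{\overline{\beta};\alpha\}}(U)\bigr),$$
valid for $d\geq l(\lambda)+l(\mu)$, where $c^\lambda_{\alpha,\tau}$ are Littlewood--Richardson coefficients. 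The unique $\tau=\emptyset$ contribution is $c^\lambda_{\lambda,\emptyset}c^\mu_{\mu,\emptyset}\Tr(\pi_{\{\overline{\mu};\lambda\}}(U))=\Tr(\pi_{\{\overline{\mu};\lambda\}}(U))$, while every remaining $(\alpha,\beta,\tau)$ satisfies $|\alpha|+|\beta|=|\lambda|+|\mu|-2|\tau|\leq|\lambda|+|\mu|-2$, consistent with the constraints of Lemma \ref{branching rule}.

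The first inequality then follows from $|\Tr(\pi_{\{\overline{\beta};\alpha\}}(U))|\leq \dim\pi_{\{\overline{\beta};\alpha\}}$ combined with the Weyl dimension formula, which shows $\dim\pi_{\{\overline{\beta};\alpha\}}$ is a polynomial in $d$ of degree $|\alpha|+|\beta|$; since only finitely many $(\alpha,\beta,\tau)$ occur with coefficients independent of $d$, summing gives
$$\bigl|s_\lambda(U)\,\overline{s_\mu(U)}-\Tr(\pi_{\{\overline{\mu};\lambda\}}(U))\bigr|\leq C_{\lambda,\mu}\,d^{|\lambda|+|\mu|-2}.$$
The finitely many $d<l(\lambda)+l(\mu)$ --- where one of $s_\lambda,s_\mu$ may vanish or $\{\overline{\mu};\lambda\}$ fails to exist as a signature of length $d$ --- are absorbed into the constant since then $d$ is bounded.

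For the second inequality, combining the first with Corollary \ref{asymp1} (expanding $s_\lambda(U)=c^\lambda_{|\lambda|,0,\ldots,0}\Tr(U)^{|\lambda|}+O(d^{|\lambda|-1})$ and its conjugate for $s_\mu$, and using $|\Tr(U)|\leq d$) gives
$$\Tr(\pi_{\{\overline{\mu};\lambda\}}(U))=c^\lambda_{|\lambda|,0,\ldots,0}\,c^\mu_{|\mu|,0,\ldots,0}\,\Tr(U)^{|\lambda|}\overline{\Tr(U)}^{|\mu|}+O_{\lambda,\mu}(d^{|\lambda|+|\mu|-1}).$$
An asymptotic expansion of the Weyl dimension formula applied to $\{\overline{\mu};\lambda\}$ --- whose ``cross'' factors of the form $\tfrac{\lambda_i+\mu_k+d-k+1-i}{d-k+1-i}$ each tend to $1$ while the remaining factors reproduce the asymptotics of $\dim\pi_\lambda$ and $\dim\pi_\mu$ --- yields $\dim\pi_{\{\overline{\mu};\lambda\}}=c^\lambda_{|\lambda|,0,\ldots,0}\,c^\mu_{|\mu|,0,\ldots,0}\,d^{|\lambda|+|\mu|}(1+O(1/d))$. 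Dividing and using $|\Tr(U)/d|\leq 1$ produces the $O(1/d)$ bound.

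The main obstacle is the first step: simultaneously identifying the coefficient $1$ of $\Tr(\pi_{\{\overline{\mu};\lambda\}}(U))$ in the decomposition of $\pi_\lambda\otimes \pi_\mu^*$ and forcing every other summand into the strictly smaller range $|\alpha|+|\beta|\leq|\lambda|+|\mu|-2$. Once the Newell--Littlewood identity is in hand the remainder reduces to elementary asymptotic expansion in $d$; without it one would need to establish multiplicity one via highest-weight theory (the tensor $v_\lambda\otimes v_{\pi_\mu^*}$ has weight $\{\overline{\mu};\lambda\}$, manifestly the unique highest one provided $d\geq l(\lambda)+l(\mu)$) and bound the error via Lemma \ref{branching rule} alone.
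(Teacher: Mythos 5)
Your proposal is correct and follows essentially the same route as the paper: the Newell--Littlewood/Koike decomposition of $s_\lambda(U)\overline{s_\mu(U)}$ into irreducible characters $\Tr(\pi_{\{\overline{\beta};\alpha\}}(U))$ is exactly the content of King's formula (3.4) that the paper invokes, and the remainder is controlled by the Weyl dimension formula just as you do. The only (cosmetic) difference is in the second inequality: the paper gets the dimension comparison $|s_\mu(1_d)s_\lambda(1_d)-\dim\pi_{\{\overline{\mu};\lambda\}}|\leq C_{\lambda,\mu}d^{|\lambda|+|\mu|-2}$ for free by evaluating the first bound at $U=1_d$ and then applies Corollary \ref{asymp1}, whereas you re-derive the asymptotics of $\dim\pi_{\{\overline{\mu};\lambda\}}$ directly from the Weyl dimension formula; both work.
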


\begin{proof} Thanks to \cite[(3.4)]{King75} and the Weyl dimension formula, we see that there exists 
a positive constant $C_{\lambda,\mu}$ satisfying 
$$|\Tr(\pi_{\{\overline{\mu};\lambda\}}(U))-\overline{s_{\mu}(U)}s_\lambda(U)|\leq C_{\lambda,\mu}d^{|\lambda|+|\mu|-2},$$
for any $U\in U(d)$. 
Thus
\begin{align*}
\lefteqn{|\chi_{\{\overline{\mu};\lambda\}}(U)-\frac{\overline{s_\mu(U)}s_\lambda(U)}{s_\mu(1_d)s_\lambda(1_d)}|} \\
 &=\frac{|s_\mu(1_d)s_\lambda(1_d)\Tr(\pi_{\{\overline{\mu};\lambda\}}(U))-\dim \pi_{\{\overline{\mu};\lambda\}}\overline{s_\mu(U)}s_\lambda(U))|}
 {\dim \pi_{\{\overline{\mu};\lambda\}}s_\mu(1_d)s_\lambda(1_d)} \\
 &\leq \frac{|s_\mu(1_d)s_\lambda(1_d)-  \dim \pi_{\{\overline{\mu};\lambda\}}||\Tr(\pi_{\{\overline{\mu};\lambda\}}(U))|}
 {\dim \pi_{\{\overline{\mu};\lambda\}}s_\mu(1_d)s_\lambda(1_d)} \\
 &+\frac{\dim \pi_{\{\overline{\mu};\lambda\}}|\Tr(\pi_{\{\overline{\mu};\lambda\}}(U))-\overline{s_\mu(U)}s_\lambda(U))|}
 {\dim \pi_{\{\overline{\mu};\lambda\}}s_\mu(1_d)s_\lambda(1_d)} \\
 &\leq \frac{2C_{\lambda,\mu}d^{|\lambda|+|\mu|-2}}{s_\mu(1_d)s_\lambda(1_d)}.
\end{align*}
Now the second statement follows from Corollary \ref{asymp1}. 
\end{proof}

\subsection{Moment formula}
Let $\{E_{ij}\}_{1\leq i,j\leq d}$ be the canonical system of matrix units in $M_d(\C)$. 
For an even number $2\leq r\leq d$, we set 
$$F=\sum_{i=1}^{r/2}E_{ii}-\sum_{i=r/2+1}^rE_{ii},$$
Let $\Lambda\in \widehat{U(d)}$ be a signature. 
Since $\chi_\Lambda(e^{\sqrt{-1}tF})$ is a positive definite function on $\T$, the Fourier expansion 
$$\chi_\Lambda(e^{\sqrt{-1}tF})=\sum_{n\in \Z}M(k)e^{\sqrt{-1}kt},$$
gives a probability distribution $\{M(k)\}_{k\in \Z}$ on $\Z$. 
For $p\in \N$, we set
$$m(\Lambda,r,p)=\langle k^p\rangle_M=\sum_{k\in \Z}k^pM(k)=\frac{1}{\sqrt{-1}^p}\frac{d^p}{dt^p}\chi_\Lambda(e^{\sqrt{-1}tF})|_{t=0}.$$
Note that the odd moments vanish as $\chi_{\Lambda}(e^{\sqrt{-1}tF})$ is an even function. 

Our purpose in this section is to obtain the estimate $m(\Lambda,r,4)=O(m(\Lambda,r,2)^2)$ for any 
$r\geq 2d/3$ by using the following Harish-Chandara-Itzykson-Zuber integral.  

\begin{lemma}\label{IZ}
Let $A,B\in M_d(\C)$ be Hermitian matrices with eigenvalues $\{\alpha_i\}_{i=1}^d$ and $\{\beta_i\}_{i=1}^d$ 
respectively, and let $dU$ be the normalized Haar measure of $U(d)$. 
\begin{itemize}
\item[(1)]
 $$\int_{U(d)}e^{\sqrt{-1}\Tr(UAU^{-1}B)}dU
=\frac{\prod_{i=1}^{d-1}i!}{\sqrt{-1}^{d(d-1)/2}}\frac{\det(e^{\sqrt{-1}\alpha_i\beta_j})}{\Delta(A)\Delta(B)}. $$
where 
$$\Delta(A)=\prod_{1\leq i<j\leq d}(\alpha_i-\alpha_j).$$
\item[(2)] Let $n\in \N$. 
$$\int_{U(d)}\Tr(UAU^{-1}B)^ndU=
\sum_{\lambda\vdash n}\frac{\dim \Pi_\lambda s_\lambda(A)s_\lambda(B)}{s_\lambda(1_d)},$$
where $\lambda$ runs over all the partitions of $n$, and $\Pi_\lambda$ is the irreducible representation of 
the symmetric group $\fS_n$ corresponding to the 
partition $\lambda$.
\end{itemize}
\end{lemma}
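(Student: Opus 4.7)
We treat (2) first, which is a direct application of Schur-Weyl duality, and then (1), the classical Harish-Chandra-Itzykson-Zuber identity.

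For (2), begin with the elementary identity $\Tr(XY)^n = \Tr_{V^{\otimes n}}(X^{\otimes n}Y^{\otimes n})$ on $V=\C^d$, specialized to $X=UAU^{-1}$ and $Y=B$. Interchanging trace and integral, the task reduces to evaluating $\int_{U(d)} U^{\otimes n}A^{\otimes n}(U^{-1})^{\otimes n}\,dU$. Under Schur-Weyl duality, $V^{\otimes n} = \bigoplus_{\lambda\vdash n,\, l(\lambda)\leq d} V_\lambda\otimes W_\lambda$ with $V_\lambda = \pi_\lambda^{(d)}$ and $W_\lambda = \Pi_\lambda$. The operator $A^{\otimes n}$ commutes with the $\fS_n$-action, so on each isotypic component it acts as $\pi_\lambda(A)\otimes\id_{W_\lambda}$; Schur's lemma then yields
$$\int_{U(d)} \pi_\lambda(U)\pi_\lambda(A)\pi_\lambda(U^{-1})\,dU = \frac{s_\lambda(A)}{s_\lambda(1_d)}\,\id_{V_\lambda}.$$
Pairing with $B^{\otimes n}$, which similarly restricts to $\pi_\lambda(B)\otimes\id_{W_\lambda}$, and using $\Tr\pi_\lambda(B)=s_\lambda(B)$ together with $\dim W_\lambda = \dim\Pi_\lambda$, assembles the sum claimed in (2).

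For (1), both sides are entire in $\alpha_i,\beta_j$, so we may restrict to the regular locus where all eigenvalues are distinct. Conjugating $A$ or $B$ by a permutation matrix leaves the left side invariant, so $\Delta(A)\Delta(B)\cdot\mathrm{LHS}$ is alternating in each of $\{\alpha_i\}$ and $\{\beta_j\}$. The classical route is Harish-Chandra's formula for the Fourier transform of a coadjoint orbit on a compact semisimple Lie group, which applied to $U(d)$ writes the left side as a sum over the Weyl group $\fS_d$ equal to $\det(e^{\sqrt{-1}\alpha_i\beta_j})/(\Delta(A)\Delta(B))$ up to a group-theoretic constant. The normalization $C_d = \prod_{i=1}^{d-1}i!/\sqrt{-1}^{d(d-1)/2}$ can be pinned down by letting $B\to 0$: the left side tends to $1$, while expanding the determinant via l'H\^opital's rule in the collapsing $\beta_j$'s produces $\prod_{i=1}^{d-1}i!$ after cancelling $\Delta(B)$ and the phase factor.

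The main obstacle is (1): a self-contained derivation requires Harish-Chandra's orbital integral formula, or equivalently a Duistermaat-Heckman (stationary-phase) argument. We would invoke this classical identity rather than reprove it in full, whereas (2) admits the clean Schur-Weyl derivation above and is what we actually need for the fourth-moment estimate $m(\Lambda,r,4)=O(m(\Lambda,r,2)^2)$ motivating this section.
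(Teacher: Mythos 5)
Your proposal is correct, but it takes a different route from the paper, which disposes of both identities by citation alone: part (1) is quoted from Itzykson--Zuber (their formula (3.4)) and part (2) from the same paper (formula (3.27)), with no argument given. Your Schur--Weyl derivation of (2) is a genuine, self-contained alternative: the identity $\Tr(XY)^n=\Tr_{V^{\otimes n}}(X^{\otimes n}Y^{\otimes n})$, the decomposition $V^{\otimes n}=\bigoplus_{\lambda\vdash n,\ l(\lambda)\leq d}V_\lambda\otimes W_\lambda$, the fact that $A^{\otimes n}$ acts as $\pi_\lambda(A)\otimes\id_{W_\lambda}$ on each isotypic block, and the Schur-lemma averaging $\int\pi_\lambda(U)\pi_\lambda(A)\pi_\lambda(U^{-1})\,dU=\frac{s_\lambda(A)}{s_\lambda(1_d)}\id_{V_\lambda}$ assemble exactly the claimed sum (the partitions with $l(\lambda)>d$ contribute nothing, consistent with the convention in the statement). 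What this buys is independence of (2) from (1) and from any orbital-integral machinery. For (1) you do essentially what the paper does, namely invoke the classical Harish-Chandra/HCIZ formula; your sketch of the normalization check via $B\to 0$ and the collapse $\det(e^{\sqrt{-1}\alpha_i\beta_j})\sim\sqrt{-1}^{d(d-1)/2}\Delta(A)\Delta(B)/\prod_{k=1}^{d-1}k!$ is consistent, so this is an acceptable level of citation. One small correction to your closing remark: part (1) is not dispensable downstream. The paper uses it (together with the Weyl character formula) in Lemma \ref{moment-integral} to express $\chi_\Lambda(e^{\sqrt{-1}tF})$ as a ratio of orbital integrals, which is how the moments $m(\Lambda,r,p)$ get identified with the quantities $J(\rho_d+\hLambda,r,p)$ that (2) then evaluates; so both parts enter the fourth-moment estimate, not (2) alone.
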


\begin{proof} (1) follows from \cite[(3.4)]{IZ} and (2) follows from \cite[(3.27)]{IZ}. 
\end{proof}

We set 
$$J(B,r,n)=\int_{U(d)}\Tr(UFU^{-1}B)^ndU
=\sum_{\lambda\vdash n}\frac{\dim \Pi_\lambda s_\lambda(F)s_\lambda(B)}{s_\lambda(1_d)}.$$
Let $\widehat{B}=B-\frac{\Tr(B)}{d}1_d$. 
Since $\Tr(F)=0$, we have $J(B,r,n)=J(\widehat{B},r,n)$. 

\begin{lemma}\label{moment-integral}
Let $\Lambda\in \widehat{U(d)}$ be a signature, and let 
$$\rho_d=((d-1)/2,(d-3)/2,\ldots,-(d-1)/2). $$
Then 
$$m(\Lambda,r,2)=J(\rho_d+\hLambda,r,2)-J(\rho_d,r,2),$$
$$m(\Lambda,r,4)=J(\rho_d+\hLambda,r,4)-6m(\Lambda,r,2)J(\rho_d,r,2)-J(\rho_d,r,4).$$
\end{lemma}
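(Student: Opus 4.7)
The plan is to express $\chi_\Lambda(e^{\sqrt{-1}tF})$ as a ratio of two Harish-Chandra-Itzykson-Zuber integrals whose Taylor expansions at $t=0$ generate the sequences $J(\rho_d+\hLambda,r,n)$ and $J(\rho_d,r,n)$, and then read off $m(\Lambda,r,2)$ and $m(\Lambda,r,4)$ from a convolution identity for the coefficients.

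First I would establish the key identity
\[
\chi_\Lambda(e^{\sqrt{-1}tF})=\frac{\int_{U(d)}e^{\sqrt{-1}t\Tr(UFU^{-1}(\rho_d+\Lambda))}dU}{\int_{U(d)}e^{\sqrt{-1}t\Tr(UFU^{-1}\rho_d)}dU}.
\]
Writing the Weyl character formula in its $\rho_d$-shifted form,
\[
\Tr(\pi_\Lambda(e^{\sqrt{-1}tF}))=\frac{\det(e^{\sqrt{-1}tF_i(\Lambda_j+(\rho_d)_j)})}{\det(e^{\sqrt{-1}tF_i(\rho_d)_j})},
\]
I would apply Lemma \ref{IZ}(1) separately to numerator and denominator, with $A=F$ and $B$ equal to $\rho_d+\Lambda$ or $\rho_d$. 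The universal prefactor $\prod_{i=1}^{d-1}i!/\sqrt{-1}^{d(d-1)/2}$ and the Vandermonde factor $\Delta(tF)$ then cancel from the ratio, and the Weyl dimension formula identifies the remaining $\Delta(\rho_d+\Lambda)/\Delta(\rho_d)$ with $\dim\pi_\Lambda$. Lemma \ref{IZ}(1) formally requires distinct eigenvalues of $F$, whereas our $F$ has spectrum in $\{+1,-1,0\}$; I would handle this by perturbing $F$ within the Hermitian matrices, observing that $\Delta(tF)$ has already cancelled before any limit is taken, and using joint continuity of both sides in the eigenvalues of $F$. This spectral degeneracy is where I expect the main technical obstacle to lie.

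Next, expanding numerator and denominator as power series in $t$ and invoking the observation (just after Lemma \ref{IZ}) that $J(B,r,n)$ depends only on the traceless part of $B$, so that $J(\rho_d+\Lambda,r,n)=J(\rho_d+\hLambda,r,n)$, multiplication by the denominator yields the convolution
\[
J(\rho_d+\hLambda,r,n)=\sum_{p+q=n}\binom{n}{p}m(\Lambda,r,p)J(\rho_d,r,q),\qquad n\geq 0.
\]
Choosing $V\in U(d)$ that interchanges the $\pm 1$-eigenspaces of $F$ gives $VFV^{-1}=-F$; conjugation invariance of $\chi_\Lambda$ then makes $t\mapsto\chi_\Lambda(e^{\sqrt{-1}tF})$ even (as the text already notes), while the Haar-invariant substitution $U\mapsto UV^{-1}$ in the definition of $J(\rho_d,r,n)$ shows that $J(\rho_d,r,n)=0$ for odd $n$.

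Finally, I would specialize to $n=2$ and $n=4$. With $m(\Lambda,r,0)=J(\rho_d,r,0)=1$ and all odd-index terms eliminated by the symmetry above, only $(p,q)=(0,2),(2,0)$ contribute at $n=2$, giving $J(\rho_d+\hLambda,r,2)=J(\rho_d,r,2)+m(\Lambda,r,2)$, and only $(p,q)=(0,4),(2,2),(4,0)$ contribute at $n=4$ with binomial coefficients $1,6,1$, giving $J(\rho_d+\hLambda,r,4)=J(\rho_d,r,4)+6m(\Lambda,r,2)J(\rho_d,r,2)+m(\Lambda,r,4)$. Solving these for $m(\Lambda,r,2)$ and $m(\Lambda,r,4)$ yields the two stated formulas; once the key identity of the first step is in hand, this final extraction is entirely formal.
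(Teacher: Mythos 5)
Your proposal is correct and follows essentially the same route as the paper: both establish the product identity $\chi_\Lambda(e^{\sqrt{-1}tF})\int_{U(d)}e^{\sqrt{-1}t\Tr(UFU^{-1}\rho_d)}dU=\int_{U(d)}e^{\sqrt{-1}t\Tr(UFU^{-1}(\Lambda+\rho_d))}dU$ from Lemma \ref{IZ},(1) together with the Weyl character/dimension formulas, and then differentiate at $t=0$, using parity and $J(B,r,n)=J(\widehat{B},r,n)$ to extract the two moment formulas. The only differences are cosmetic: the paper applies the HCIZ formula once and normalizes by the $\Lambda=0$ case to remove the sine-product prefactor (which also sidesteps the degenerate spectrum of $F$, since that prefactor extends continuously), whereas you apply it separately to numerator and denominator and handle the degeneracy by perturbation.
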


\begin{proof}
Lemma \ref{IZ},(1) and the Weyl character formula (or \cite[Theorem 2]{HC}) imply
$$\chi_{\Lambda}(e^{\sqrt{-1}A})=\prod_{1\leq i<j\leq d}\frac{\frac{\alpha_i-\alpha_j}{2}}{\sin\frac{\alpha_i-\alpha_j}{2}}
\int_{U(d)}e^{\sqrt{-1}\Tr(UAU^{-1}(\Lambda+\rho_d))}dU.$$
When $\Lambda=0$, the left-hand side is $1$. 
Thus we get
$$\chi_\Lambda(e^{\sqrt{-1}tF})\int_{U(d)}e^{\sqrt{-1}t\Tr(UFU^{-1}\rho_d)}dU=\int_{U(d)}e^{\sqrt{-1}t\Tr(UFU^{-1}(\Lambda+\rho_d))}dU.$$
Differentiating the both sides by $t$, we get the statement. 
\end{proof}

Easy computation shows 
$$s_{(2)}=\frac{p_2+p_1^2}{2},\quad s_{(1,1)}=\frac{-p_2+p_1^2}{2},$$
$$s_{(4)}=\frac{p_4}{4}+\frac{p_3p_1}{3}+\frac{p_2^2}{8}+\frac{p_2p_1^2}{4}+\frac{p_1^4}{24},$$
$$s_{(1,1,1,1)}=-\frac{p_4}{4}+\frac{p_3p_1}{3}+\frac{p_2^2}{8}-\frac{p_2p_1^2}{4}+\frac{p_1^4}{24},$$
$$s_{(3,1)}=-\frac{p_4}{4}-\frac{p_2^2}{8}+\frac{p_2p_1^2}{4}+\frac{p_1^4}{8},$$
$$s_{(2,1,1)}=\frac{p_4}{4}-\frac{p_2^2}{8}-\frac{p_2p_1^2}{4}+\frac{p_1^4}{8},$$
$$s_{(2,2)}=-\frac{p_3p_1}{3}+\frac{p_2^2}{4}+\frac{p_1^4}{12},$$
and if $\Tr(B)=0$, we get 
$$s_{(2)}(B)=\frac{\Tr(B^2)}{2},\quad s_{(1,1)}(B)=\frac{-\Tr(B^2)}{2},$$
$$s_{(4)}(B)=\frac{\Tr(B^4)}{4}+\frac{\Tr(B^2)^2}{8},\quad s_{(1,1,1,1)}(B)=-\frac{\Tr(B^4)}{4}+\frac{\Tr(B^2)^2}{8},$$
$$s_{(3,1)}(B)=-\frac{\Tr(B^4)}{4}-\frac{\Tr(B^2)^2}{8},\quad s_{(2,1,1)}(B)=\frac{\Tr(B^4)}{4}-\frac{\Tr(B^2)^2}{8},$$
$$s_{(2,2)}(B)=\frac{\Tr(B^2)^2}{4}.$$
We also have 
$$s_{(2)}(1_d)=\frac{d(d+1)}{2},\quad s_{(1,1)}(1_d)=\frac{d(d-1)}{2},$$
$$s_{(4)}(1_d)=\frac{d(d+1)(d+2)(d+3)}{24},\quad s_{(1,1,1,1)}(1_d)=\frac{d(d-1)(d-2)(d-3)}{24},$$
$$s_{(3,1)}(1_d)=\frac{(d-1)d(d+1)(d+2)}{8},\quad s_{(2,1,1)}(1_d)=\frac{(d-2)(d-1)d(d+1)}{8},$$
$$s_{(2,2)}(1_d)=\frac{(d-1)d^2(d+1)}{12},$$
$\dim \Pi_{(2)}=\dim \Pi_{(1,1)}=1$, $\dim \Pi_{(4)}=\dim\Pi_{(1,1,1,1)}=1$, 
$\dim\Pi_{(2,2)}=2$, $\dim \Pi_{(3,1)}=\dim\Pi_{(2,1,1)}=3$. 
Thus we obtain the following lemma. 

\begin{lemma}\label{2moment} When $\Tr(B)=0$ and $d\geq 4$, we have 
$$J(B,r,2)=\frac{r\Tr(B^2)}{d^2-1},$$
$$m(\Lambda,r,2)=\frac{r\Tr(2\hLambda\rho_d+\hLambda^2)}{d^2-1},$$
\begin{align*}
J(B,r,4)&=3r\frac{(d^4-6d^2+18)r-2d(2d^2-3)}{d^2(d^2-1)(d^2-4)(d^2-9)}\Tr(B^2)^2  \\
 &-6r\frac{(2d^2-3)r-d(d^2+1)}{d(d^2-1)(d^2-4)(d^2-9)}\Tr(B^4). 
\end{align*}
\end{lemma}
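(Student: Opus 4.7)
The plan is to evaluate each $J(B,r,n)$ by direct substitution into the expansion of Lemma~\ref{IZ},(2),
$$J(B,r,n)=\sum_{\lambda\vdash n}\frac{\dim\Pi_\lambda\,s_\lambda(F)\,s_\lambda(B)}{s_\lambda(1_d)},$$
combined with the explicit formulas for $s_\lambda(B)$, $s_\lambda(F)$ and $s_\lambda(1_d)$ listed immediately before the statement of the lemma. The key simplification is that both $F$ and $B$ are traceless: $\Tr(F)=r/2-r/2=0$ by the definition of $F$, and $\Tr(B)=0$ by assumption. Consequently the compact expressions for $s_\lambda$ in terms of $\Tr(\cdot^2)$ and $\Tr(\cdot^4)$ apply verbatim, and one reads off $s_\lambda(F)$ for each $\lambda$ using $\Tr(F^2)=\Tr(F^4)=r$.

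For $n=2$, only the partitions $(2)$ and $(1,1)$ contribute. Substituting gives two fractions which, over the common denominator $d^2-1$, add up to $r\Tr(B^2)/(d^2-1)$. The moment formula $m(\Lambda,r,2)$ then follows immediately from Lemma~\ref{moment-integral}: both $\rho_d$ and $\hLambda$ are traceless, so expanding $(\rho_d+\hLambda)^2$ and applying the $n=2$ formula twice yields
$$J(\rho_d+\hLambda,r,2)-J(\rho_d,r,2)=\frac{r\Tr(2\hLambda\rho_d+\hLambda^2)}{d^2-1}.$$

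For $n=4$ all five partitions of $4$ contribute. Since each $s_\lambda(B)$ is an $\R$-linear combination of the two invariants $\Tr(B^4)$ and $\Tr(B^2)^2$, and each $s_\lambda(F)$ is an $\R$-linear combination of $r$ and $r^2$, the final $J(B,r,4)$ is $\R$-linear in $(\Tr(B^4),\Tr(B^2)^2)$ with coefficients that are rational in $d$ and polynomial of degree two in $r$. The approach is to collect the two coefficients separately over the common denominator $d^2(d^2-1)(d^2-4)(d^2-9)$; the hypothesis $d\geq 4$ guarantees that none of the individual denominators $s_\lambda(1_d)$ vanish. The main obstacle is purely book-keeping: five partitions, each with its own sign, dimension $\dim\Pi_\lambda\in\{1,2,3\}$, and polynomial factor, must be combined and the resulting numerators checked to simplify to the two displayed quantities $3r\bigl((d^4-6d^2+18)r-2d(2d^2-3)\bigr)$ and $-6r\bigl((2d^2-3)r-d(d^2+1)\bigr)$. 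There is no conceptual difficulty beyond patience with the algebra.
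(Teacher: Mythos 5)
Your proposal is correct and follows the same route as the paper: the paper likewise obtains the lemma by substituting the tabulated values of $s_\lambda(B)$, $s_\lambda(F)$ (using $\Tr(F)=0$, $\Tr(F^2)=\Tr(F^4)=r$), $s_\lambda(1_d)$ and $\dim\Pi_\lambda$ into the expansion of Lemma \ref{IZ},(2), and gets $m(\Lambda,r,2)$ from Lemma \ref{moment-integral} exactly as you describe. The remaining $n=4$ bookkeeping you defer is also left implicit in the paper, so there is nothing further to add.
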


We use the following easy lemma for our main estimate Lemma \ref{2-4estimate}. 

\begin{lemma} Let $a,b\in \R^d$ be vectors satisfying $a_i\geq a_{i+1}$ and $b_i\geq b_{i+1}$ for any $1\leq i\leq d-1$. 
If  $\sum_i^na_i=0$, then $\sum_{i=1}^da_ib_i\geq 0$.  
\end{lemma}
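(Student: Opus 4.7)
The plan is to prove the inequality by an Abel summation (summation-by-parts) argument, reducing the problem to the nonnegativity of the partial sums of $a$.

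First I would introduce the partial sums $S_i=\sum_{j=1}^i a_j$, with the convention $S_0=0$, and rewrite
\begin{equation*}
\sum_{i=1}^d a_i b_i = \sum_{i=1}^d (S_i - S_{i-1}) b_i = \sum_{i=1}^{d-1} S_i(b_i - b_{i+1}) + S_d\, b_d.
\end{equation*}
The hypothesis $\sum_i a_i = 0$ says exactly that $S_d = 0$, so the last term vanishes and we are left with $\sum_{i=1}^{d-1} S_i(b_i - b_{i+1})$.

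Next, since $b$ is non-increasing, each factor $b_i - b_{i+1}$ is nonnegative. It therefore suffices to prove $S_i \geq 0$ for every $1\leq i\leq d-1$. This is where the monotonicity of $a$ enters: because $a_1\geq \cdots \geq a_i$ and $a_{i+1}\geq \cdots \geq a_d$, we have the two bounds $S_i \geq i\, a_i$ and $S_d - S_i \leq (d-i)\, a_{i+1} \leq (d-i)\, a_i$. Combining these gives $(d-i)S_i \geq i(S_d - S_i)$, i.e.\ $d\cdot S_i \geq i \cdot S_d = 0$, hence $S_i \geq 0$.

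Putting the two observations together yields $\sum_{i=1}^d a_i b_i \geq 0$, as desired. There is no real obstacle here; the only thing to watch is the bookkeeping of the telescoping identity and getting the direction of the inequality $S_i\geq 0$ correct from the ordering of $a$.
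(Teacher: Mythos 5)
Your proof is correct and follows essentially the same route as the paper: Abel summation with the partial sums $S_i$, using $S_d=0$ and $b_i-b_{i+1}\geq 0$. The only difference is that you also spell out the verification that $S_i\geq 0$ (via $S_i\geq ia_i$ and $S_d-S_i\leq (d-i)a_i$), a fact the paper simply asserts, so your write-up is if anything slightly more complete.
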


\begin{proof} Let $S_0=0$ and $S_i=\sum_{j=1}^ia_j$. 
Then $S_i\geq 0$, $S_d=0$, and 
\begin{align*}
\sum_{i=1}^da_ib_i &=\sum_{i=1}^d(S_i-S_{i-1})b_i=\sum_{i=1}^dS_ib_i-\sum_{i=1}^{d-1}S_ib_{i+1} \\
 &=S_db_d+\sum_{i=1}^{d-1}S_i(b_i-b_{i+1})=\sum_{i=1}^{d-1}S_i(b_i-b_{i+1})\geq 0.
\end{align*}
\end{proof}

In particular, we have $\Tr(\hLambda\rho_d)\geq 0$, $\Tr(\hLambda^3\rho_d)\geq 0$, $\Tr(\hLambda\rho_d^3)\geq 0$. 

\begin{lemma}\label{2-4estimate} 
There exist positive constants $C_1,C_2>0$, independent of $\Lambda$, such that whenever $r\geq 2d/3$ and $d\geq 4$, 
we have $m(\Lambda,r,4)\leq C_1m(\Lambda,r,2)^2+C_2m(\Lambda,r,2)$. 
\end{lemma}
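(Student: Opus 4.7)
The plan is to express $m(\Lambda,r,4)$ as an explicit polynomial in $M:=m(\Lambda,r,2)$ plus a non-positive correction, and then to bound the resulting expression coefficient-by-coefficient. Write $\alpha_r$ and $\beta_r$ for the coefficients of $\Tr(B^2)^2$ and $\Tr(B^4)$ in the formula of Lemma~\ref{2moment}. By Lemma~\ref{moment-integral},
$$m(\Lambda,r,4)=J(\rho_d+\hLambda,r,4)-6MJ(\rho_d,r,2)-J(\rho_d,r,4).$$
Since $\rho_d$ and $\hLambda$ commute (both being diagonal), binomial expansion gives $\Tr((\rho_d+\hLambda)^2)=P+2A_1+A_2$ and $\Tr((\rho_d+\hLambda)^4)=\Tr(\rho_d^4)+4B_3+6B_{22}+4B_{13}+B_4$, where $P=\Tr(\rho_d^2)$, $A_1=\Tr(\hLambda\rho_d)$, $A_2=\Tr(\hLambda^2)$, $B_3=\Tr(\hLambda\rho_d^3)$, $B_{22}=\Tr(\hLambda^2\rho_d^2)$, $B_{13}=\Tr(\hLambda^3\rho_d)$, $B_4=\Tr(\hLambda^4)$. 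Substituting into the formula for $J$ from Lemma~\ref{2moment} presents $m(\Lambda,r,4)$ as the sum of an $\alpha_r$-part, a $\beta_r$-part, and the $-6MJ(\rho_d,r,2)$ correction.

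The first move is to dispose of the $\beta_r$-part. All four of $B_3,B_{13},B_{22},B_4$ are non-negative: $B_{22}$ and $B_4$ trivially, and $B_3,B_{13}$ by the previous lemma (for $B_3$ take $(a,b)=(\hLambda,\rho_d^3)$; for $B_{13}$ swap the roles and take $(a,b)=(\rho_d,\hLambda^3)$, using that $\sum_i\rho_{d,i}=0$ and that cubing preserves the ordering on $\hLambda$). On the other hand, an elementary check shows $(2d^2-3)r-d(d^2+1)\geq 0$ whenever $r\geq 2d/3$ and $d\geq 4$, hence $\beta_r\leq 0$. Consequently $\beta_r[4B_3+6B_{22}+4B_{13}+B_4]\leq 0$, and the whole $\beta_r$-piece may be dropped from the upper bound.

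The second move is to factor $M$ out of what remains. Using $2A_1+A_2=(d^2-1)M/r$ and $(P+2A_1+A_2)^2-P^2=(2A_1+A_2)(2P+2A_1+A_2)$, the $\alpha_r$-part combines with $-6MrP/(d^2-1)$ to yield
$$m(\Lambda,r,4)\leq \frac{\alpha_r(d^2-1)^2}{r^2}M^2+\left(\frac{2\alpha_r(d^2-1)P}{r}-\frac{6rP}{d^2-1}\right)M.$$
Substituting the explicit formula for $\alpha_r$ together with the short direct calculation $P=d(d^2-1)/12$ turns both coefficients into rational functions of $r$ and $d$.

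The remaining step, where the actual work lies, is to show that both coefficients are bounded uniformly for $r\in[2d/3,d]$, $d\geq 4$. The $M^2$ coefficient reduces to $3(d^2-1)\bigl[(d^4-6d^2+18)-2d(2d^2-3)/r\bigr]/[d^2(d^2-4)(d^2-9)]$, which is monotone in $r$ and easily bounded. The $M$ coefficient is subtler: each of $2\alpha_r(d^2-1)P/r$ and $6rP/(d^2-1)$ is individually of order $d^2$, and only after a genuine cancellation does the difference collapse to the $O(1)$ expression $\bigl[3(d^2+1)(d^2-3)r-d(d^2-1)(2d^2-3)\bigr]/[d(d^2-4)(d^2-9)]$. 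The main obstacle is carrying out and verifying this cancellation cleanly; once it is done, the suprema of the two coefficients over the admissible range furnish the universal constants $C_1$ and $C_2$.
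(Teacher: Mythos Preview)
Your argument is correct and follows essentially the same route as the paper's proof: both use Lemmas~\ref{moment-integral} and~\ref{2moment}, discard the $\beta_r$-contribution via the non-negativity of $\Tr(\hLambda\rho_d^3)$, $\Tr(\hLambda^2\rho_d^2)$, $\Tr(\hLambda^3\rho_d)$, $\Tr(\hLambda^4)$ together with the sign of $(2d^2-3)r-d(d^2+1)$ for $r\geq 2d/3$, and then bound the remaining $\alpha_r$-part as a quadratic in $M$. The only cosmetic difference is that the paper replaces $\alpha_r$ early on by its upper bound $3r^2(d^4-6d^2+18)/[d^2(d^2-1)(d^2-4)(d^2-9)]$ (dropping the negative $-2d(2d^2-3)$ term), which avoids the cancellation you carry out at the end; your more careful bookkeeping yields a slightly sharper but equivalent $O(1)$ estimate for the $M$-coefficient.
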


\begin{proof} Lemma \ref{moment-integral} and Lemma \ref{2moment} imply 
\begin{align*}
\lefteqn{m(\Lambda,r,4)=J(\rho_d+\hLambda,r,4)-J(\rho_d,r,4)-6m(\Lambda,r,2)J(\rho_d,r,2)} \\
 &=3r\frac{(d^4-6d^2+18)r-2d(2d^2-3)}{d^2(d^2-1)(d^2-4)(d^2-9)}(\Tr((\hLambda+\rho_d)^2)^2-\Tr(\rho_d^2)^2) \\
 &-6r\frac{(2d^2-3)r-d(d^2+1)}{d(d^2-1)(d^2-4)(d^2-9)}\Tr((\hLambda+\rho_d)^4-\Tr(\rho_d^4))-6m(\Lambda,r,2)\frac{r\Tr(\rho_d^2)}{d^2-1}\\
 &\leq 3r\frac{(d^4-6d^2+18)r}{d^2(d^2-1)(d^2-4)(d^2-9)}(\Tr(2\hLambda\rho_d+\hLambda^2)^2+2\Tr(2\hLambda\rho_d+\hLambda^2)\Tr(\rho_d^2)) \\
 &-6r\frac{(2d^2-3)r-d(d^2+1)}{d(d^2-1)(d^2-4)(d^2-9)}\Tr(\hLambda^4+4\hLambda^3\rho_d+6\hLambda^2\rho_d^2+4\hLambda\rho_d^3)\\
 &-6m(\Lambda,r,2)\frac{r\Tr(\rho_d^2)}{d^2-1}.
\end{align*}
Since the second term is negative, we have  
\begin{align*}
\lefteqn{m(\Lambda,r,4) \leq 3r^2\frac{(d^4-6d^2+18)}{d^2(d^2-1)(d^2-4)(d^2-9)}} \\
 &\times (\frac{(d^2-1)^2m(\Lambda,r,2)^2}{r^2}+\frac{2(d^2-1)m(\Lambda,r,2)\Tr(\rho_d^2)}{r}) -6m(\Lambda,r,2)\frac{r\Tr(\rho_d^2)}{d^2-1}\\
 &\leq 3m(\Lambda,r,2)^2\frac{(d^2-1)(d^4-6d^2+18)}{d^2(d^2-4)(d^2-9)}\\
 &+36m(\Lambda,r,2)\Tr(\rho_d^2)r\frac{d^4-2d^2-3}{d^2(d^2-1)(d^2-4)(d^2-9)}.\\
\end{align*}
Since $\Tr(\rho_d^2)=d(d^2-1)/12$, we get 
\begin{align*}
\lefteqn{m(\Lambda,r,4)}\\
 &\leq 3m(\Lambda,r,2)^2\frac{(d^2-1)(d^4-6d^2+18)}{d^2(d^2-4)(d^2-9)}
+3m(\Lambda,r,2)\frac{r(d^4-2d^2-3)}{d(d^2-4)(d^2-9)}\\
 &\leq 3m(\Lambda,r,2)^2\frac{(d^2-1)(d^4-6d^2+18)}{d^2(d^2-4)(d^2-9)}
+2m(\Lambda,r,2)\frac{d^4-2d^2-3}{(d^2-4)(d^2-9)},
\end{align*}
which shows the statement. 
\end{proof}

\section{Schur-Weyl duality for the hyperfinite II$_1$ factor}

For a tracial state $\tau$ on a $C^*$-algebra or a von Neumann algebra, we denote 
$\|x\|_\tau=\tau(x^*x)^{1/2}$. 
When $\tau$ is unique, we denote $\|x\|_2=\|x\|_\tau$. 
The following is an analogue of the Schur-Weyl duality theorem (cf. \cite{K73}). 

\begin{theorem} \label{Schur-Weyl1} 
Let $\cR_0$ be the hyperfinite II$_1$ factor acting on $L^2(\cR_0)$ with a cyclic and separating 
trace vector $\Omega\in L^2(\cR_0)$, and let $J$ be the canonical conjugation defined by $Jx\Omega=x^*\Omega$ 
for $x\in \cR_0$. 
For non-negative integers $p,q$ with $(p,q)\neq (0,0)$, 
$$\{u^{\otimes p}\otimes (JuJ)^{\otimes q}\in \cR_0^{\otimes p}\otimes \cR_0'^{\otimes q};\; u\in U(\cR_0)\}''=
(\cR_0^{\otimes p})^{\fS_p}\otimes (\cR_0'^{\otimes q})^{\fS_q},$$
where the symmetric group $\fS_p$ (resp. $\fS_q$) acts on $\cR_0^{\otimes p}$ (resp. $\cR_0'^{\otimes q}$) 
as the permutations of tensor components. 
In particular, the above von Neumann algebra is isomorphic to the hyperfinite II$_1$ factor. 
\end{theorem}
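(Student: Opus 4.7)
The inclusion $\{u^{\otimes p}\otimes(JuJ)^{\otimes q}:u\in U(\cR_0)\}''\subset(\cR_0^{\otimes p})^{\fS_p}\otimes(\cR_0'^{\otimes q})^{\fS_q}$ is immediate, since each generator is fixed by the tensor-factor permutation action of $\fS_p\times\fS_q$. The plan for the reverse inclusion is to reduce to finite-dimensional Schur--Weyl duality via the AFD presentation $\cR_0=(\bigcup_n M_{d_n}(\C))''$ and then to decouple in the limit.

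First I would establish the pure symmetric case $\{u^{\otimes p}:u\in U(\cR_0)\}''=(\cR_0^{\otimes p})^{\fS_p}$ (and, by the symmetry $x\mapsto JxJ$, the analogous $p=0$ statement). For $u\in U(M_{d_n})$, using the canonical splitting $L^2(\cR_0)=L^2(M_{d_n})\otimes L^2(M_{d_n}'\cap\cR_0)$, the operator $u^{\otimes p}$ acts as $u^{\otimes p}\otimes 1$, and classical Schur--Weyl duality for $U(d_n)$ gives $\{u^{\otimes p}:u\in U(d_n)\}''=(M_{d_n}^{\otimes p})^{\fS_p}$; polarization fills in all symmetric tensors. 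Then $\bigcup_n U(M_{d_n})$ is SOT-dense in $U(\cR_0)$ (via polar decomposition of trace-preserving conditional expectations), and the $\fS_p$-equivariant expectations $E_n\colon\cR_0^{\otimes p}\to M_{d_n}^{\otimes p}$ yield $\overline{\bigcup_n(M_{d_n}^{\otimes p})^{\fS_p}}^{\mathrm{SOT}}=(\cR_0^{\otimes p})^{\fS_p}$.

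Next, I would show the diagonal generators produce the tensor product of the two separately generated algebras, not a diagonal subalgebra. Using $\cR_0\cong M\,\bar{\otimes}\,N$ with $M,N\cong\cR_0$, for $u=v\otimes 1_N$ the generator decomposes as $(v^{\otimes p}\otimes(J_MvJ_M)^{\otimes q})\otimes 1_{L^2(N)^{\otimes(p+q)}}$ after reorganizing $L^2(\cR_0)^{\otimes(p+q)}=L^2(M)^{\otimes(p+q)}\otimes L^2(N)^{\otimes(p+q)}$, and symmetrically for $u=1_M\otimes w$; these produce commuting pieces on disjoint tensor factors, while the ``mixing'' unitaries in $U(\cR_0)\setminus(U(M)\cdot U(N))$ are what is needed to pass from the over-symmetrised $\fS_p\times\fS_p\times\fS_q\times\fS_q$-fixed piece one would get from the product subgroup to the correct diagonal $\fS_p\times\fS_q$-fixed target. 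Finally, since $\fS_p$ acts outerly on the hyperfinite II$_1$ factor $\cR_0^{\otimes p}$ by tensor permutation, $(\cR_0^{\otimes p})^{\fS_p}$ is a II$_1$ subfactor of index $p!$, hence itself hyperfinite II$_1$; symmetrically for the $J$-side, and the tensor product stays hyperfinite II$_1$.

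The main obstacle is the decoupling step. In finite dimensions the analogous equality fails outright: the commutant of $U(d)$ on $(\C^d)^{\otimes p}\otimes((\C^d)^*)^{\otimes q}$ is the walled Brauer algebra $B_{p,q}(d)$, strictly larger than $\C[\fS_p\times\fS_q]$ because of contraction generators between primal and dual slots. Consequently, for each finite $M_{d_n}$, $\{u^{\otimes p}\otimes(JuJ)^{\otimes q}:u\in U(M_{d_n})\}''$ is \emph{strictly} smaller than $(M_{d_n}^{\otimes p})^{\fS_p}\otimes((JM_{d_n}J)^{\otimes q})^{\fS_q}$, so the naive approximation strategy of Step 1 does not transfer. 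The theorem therefore exploits infinite-dimensionality essentially: as $d_n\to\infty$, the Brauer contractions become weakly negligible, thanks to the asymptotic commutativity of the AFD tower, and the coupled generators fill out the full fixed-point algebra in the SOT limit. Quantifying this collapse of contractions, together with handling the non-compactness of $U(\cR_0)$ (which precludes a direct weak-density argument as in Step 1), is the technical heart of the proof.
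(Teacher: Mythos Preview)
Your diagnosis of the obstacle is exactly right: for finite $d$ the commutant of $\{u^{\otimes p}\otimes\bar u^{\otimes q}\}$ on $(\C^d)^{\otimes p}\otimes(\overline{\C^d})^{\otimes q}$ is the walled Brauer algebra, strictly larger than $\C[\fS_p\times\fS_q]$, so with $A_n=M_{d_n}(\C)$ one has $P_n:=\{u^{\otimes p}\otimes(JuJ)^{\otimes q}:u\in U(A_n)\}''\subsetneq Q_n:=(A_n^{\otimes p})^{\fS_p}\otimes((JA_nJ)^{\otimes q})^{\fS_q}$ at every finite stage. But your proposal stops at naming the obstacle rather than resolving it, and the tensor-splitting manoeuvre in your second paragraph does not help. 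Taking $u=v\otimes 1_N$ still produces the \emph{coupled} generator $v^{\otimes p}\otimes(J_MvJ_M)^{\otimes q}$ on the $M$-slots, so you have merely reduced the problem for $\cR_0$ to the identical problem for $M\cong\cR_0$; this is circular, and the subsequent appeal to ``mixing unitaries'' to pass from $\fS_p^2\times\fS_q^2$-invariants to $\fS_p\times\fS_q$-invariants presupposes what is to be proved.

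The paper quantifies the collapse of the extra Brauer idempotents directly, without any decoupling step. Decompose $(\C^{d_n})^{\otimes p}\otimes(\overline{\C^{d_n}})^{\otimes q}=\bigoplus_{\lambda\vdash p,\,\mu\vdash q}\cH_\lambda\otimes\overline{\cH_\mu}\otimes\cK_\lambda\otimes\cK_\mu$, so that $Q_n=\bigoplus_{\lambda,\mu}B(\cH_\lambda\otimes\overline{\cH_\mu})\otimes 1$. In each block $\cH_\lambda\otimes\overline{\cH_\mu}$ the irreducible $U(d_n)$-module $\pi_{\{\overline\mu;\lambda\}}$ occurs with multiplicity one, and it occurs in no other block; let $e_{\lambda,\mu}$ be the projection onto it and $e_n=\bigoplus_{\lambda,\mu}e_{\lambda,\mu}\otimes 1$. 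Then $e_nQ_ne_n\subset P_n$, because after compression the blocks carry pairwise inequivalent irreducible $U(d_n)$-actions. The dimension estimate $s_\lambda(1_d)s_\mu(1_d)-\dim\pi_{\{\overline\mu;\lambda\}}=O(d^{|\lambda|+|\mu|-2})$ of Lemma~\ref{asymp2} then gives $\|1-e_n\|_2\to 0$, and for $x$ in the unit ball of $Q_n$ one has $\|x-e_nxe_n\|_2\le 2\|1-e_n\|_2$. Hence $\bigcup_n Q_n$ is asymptotically contained in $\bigcup_n P_n$ in $\|\cdot\|_2$, and the two weak closures coincide. This is exactly the ``contractions become weakly negligible'' phenomenon you anticipated, made concrete: the complement $1-e_n$ is precisely where the walled-Brauer contractions live, and its normalized trace tends to zero.
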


\begin{proof} Since $\cR_0$ is hyperfinite, there exists an increasing sequence of finite dimensional von Neumann subalgebras 
$\{A_n\}_{n=1}^\infty$ whose union is dense in $\cR_0$. 
We may further assume that $A_n=M_{2^n}(\C)$. 
We set  
$$P=\{u^{\otimes p}\otimes (JuJ)^{\otimes q}\in \cR_0^{\otimes p}\otimes \cR_0'^{\otimes q};\; u\in U(\cR_0)\}''$$
$$P_n=\{u^{\otimes p}\otimes (JuJ)^{\otimes q}\in \cR_0^{\otimes p}\otimes \cR_0'^{\otimes q};\; u\in U(A_n)\}'',$$
$$Q=(\cR_0^{\otimes p})^{\fS_p}\otimes (\cR_0'^{\otimes q})^{\fS_q},$$
$$Q_n=(A_n^{\otimes p})^{\fS_p}\otimes ((JA_nJ)^{\otimes q}))^{\fS_q}.$$
Then $\cup_{n=1}^\infty P_n$ is dense in $P$ and $\cup_{n=1}^\infty Q_n$ is dense in $Q$. 
We have an obvious inclusion $P_n\subset Q_n$. 
To prove $P=Q$, it suffices to show the following statement: there exists a decreasing sequence of positive numbers $\{a_n\}_{n=1}^\infty$ 
converging to 0 such that for any $x$ in the unit ball of $Q_n$ there exists $y$ in the unit ball of $P_n$ satisfying $\|x-y\|_2\leq a_n$. 

Let $H_n=\C^{2^n}$, and let $\overline{H_n}$ be its complex conjugate Hilbert space. 
We identify $A_n^{\otimes p}\otimes (JA_nJ)^{\otimes q}$ with $B(H_n^{\otimes p})\otimes B(\overline{H_n}^{\otimes q})$. 
Let $(\Pi_\lambda,\cK_\lambda)$ be the irreducible representation of $\fS_p$ corresponding to the partition $\lambda$ of $p$. 
Then we can make the following identification: 
$$H_n^{\otimes p}\otimes \overline{H_n}^{\otimes q}=\bigoplus_{\lambda\vdash p,\;\mu\vdash q} 
\cH_\lambda\otimes\overline{\cH_\mu}\otimes \cK_\lambda\otimes \cK_\mu,$$
$$Q_n=\bigoplus_{\lambda\vdash p,\;\mu\vdash q} 
B(\cH_\lambda\otimes\overline{\cH_\mu})\otimes 1_{\cK_\lambda}\otimes 1_{\cK_\mu},$$
$$P_n=\{\bigoplus_{\lambda\vdash p,\;\mu\vdash q}
(\pi_\lambda\otimes\overline{\pi_\mu})(U)\otimes  1_{\cK_\lambda}\otimes 1_{\cK_\mu};
\; U\in U(2^n)\}''. $$

For a pair of partitions $(\lambda,\mu)$ with $|\lambda|=p$ and $|\mu|=q$,  
the irreducible representation $\pi_{\{\overline{\mu};\lambda\}}$ of $U(2^n)$ is contained in $\pi_\lambda\otimes \overline{\pi_\mu}$ 
with multiplicity 1, and it is not contained in  $\pi_{\lambda'}\otimes\overline{\pi_{\mu'}}$ with a different pair $(\lambda',\mu')$. 
Let $e_{\lambda,\mu}\in B(\cH_\lambda\otimes \overline{\cH_\mu})$ be the projection onto $\cH_{\{\overline{\mu};\lambda\}}$, 
and let  
$$e_n=\bigoplus_{\lambda\vdash p,\;\mu\vdash q} e_{\lambda,\mu}\otimes 1_{\cK_\lambda}\otimes 1_{\cK_\mu}.$$
Then $e_nQ_ne_n\subset P_n$. 
For $x$ in the unit ball $Q_n$, we have 
$$\|x-e_nxe_n\|_2\leq \|x(1-e_n)+(1-e_n)xe_n\|_2\leq 2\|x\|\|1-e_n\|_\tau\leq 2\|1-e_n\|_2.$$
Thus it suffices to show that the sequence $\{\|1-e_n\|_2\}_{n=1}^\infty$ converges to 0. 
Indeed, since 
\begin{align*}
\lefteqn{\|1-e_n\|_2^2=\frac{\Tr(1-e_n)}{2^{n(p+q)}}} \\
 &=\frac{1}{2^{n(p+q)}}
\sum_{\lambda\vdash p,\;\mu\vdash q}(s_\lambda(1_{2^n})s_\mu(1_{2^n})-\dim\pi_{\{\overline{\mu};\lambda\}})
\dim\cK_\lambda\dim\cK_\mu,
\end{align*}
the convergence follows from Lemma \ref{asymp2}.
\end{proof}

\begin{remark} Although we don't know if the statement of the above theorem holds for an arbitrary 
II$_1$ factor $R$, we see from Theorem \ref{II1} that the von Neumann algebra 
$$\{u^{\otimes p}\otimes (JuJ)^{\otimes q}\in R^{\otimes p}\otimes R'^{\otimes q};\; u\in U(R)\}''$$
is always a II$_1$ factor. 
\end{remark}

\begin{theorem} \label{Schur-Weyl2} 
Let $A$ be a unital $C^*$-algebra, and let $\tau_1,\tau_2,\cdots \tau_m\in \ex T(A)$ 
be distinct extreme traces. 
Let $(\pi_i,\cH_i,\Omega_i)$ be the GNS triple for $\tau_i$, and let $J_i$ be the canonical conjugation 
defined by $J_i\pi_i(x)\Omega_i=\pi_i(x)^*\Omega_i$. 
We assume that $R_i=\pi_i(A)''$ is isomorphic to the hyperfinite II$_1$ factor for every $1\leq i\leq m$. 
Let $(p_i,q_i)\in \Z_{\geq 0}\times \Z_{\geq 0}\setminus (0,0)$, and let 
$$\pi(u)=\bigotimes_{i=1}^m 
(\pi_i(u)^{\otimes p_i}\otimes (J_i\pi_i(u)J_i)^{\otimes q_i}).$$ 
Then we have
$$\{\pi(u);\; u\in U(A)\}''=\bigotimes_{i=1}^m 
((R_i^{\otimes p_i})^{\fS_{p_i}}\otimes (R_i'^{\otimes q_i})^{\fS_{q_i}}).$$
In particular, we have $\prod_{i=1}^m\tau_i^{p_i}\overline{\tau_i}^{q_i}\in \ex\Char(U(A))$.   
\end{theorem}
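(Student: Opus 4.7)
The plan is to bootstrap from the single-trace case (Theorem~\ref{Schur-Weyl1}) by exploiting that distinct extreme traces give pairwise disjoint factor representations. Set $\pi_i^{(p_i,q_i)}(v) = v^{\otimes p_i} \otimes (J_i v J_i)^{\otimes q_i}$ for $v \in U(R_i)$ and $B_i = (R_i^{\otimes p_i})^{\fS_{p_i}} \otimes (R_i'^{\otimes q_i})^{\fS_{q_i}}$, so Theorem~\ref{Schur-Weyl1} reads $\{\pi_i^{(p_i,q_i)}(v);v\in U(R_i)\}'' = B_i$, a hyperfinite II$_1$ factor. Assemble the group homomorphism $\tilde\pi: \prod_{i=1}^m U(R_i) \to U(\cH)$ defined by $\tilde\pi(v_1,\ldots,v_m) = \bigotimes_{i=1}^m \pi_i^{(p_i,q_i)}(v_i)$; this is continuous coordinatewise in the SOT, and $\pi = \tilde\pi\circ \pi'$ for $\pi'(u) = (\pi_1(u),\ldots,\pi_m(u))$.

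The first key step is to show that $\pi'(U(A))$ is SOT-dense in $\prod_{i=1}^m U(R_i)$. Since the $\tau_i$ are distinct extreme traces of $A$ and each $R_i$ is a factor, the GNS representations $\pi_1,\ldots,\pi_m$ are pairwise disjoint (distinct extreme traces cannot arise from quasi-equivalent factor representations), and a standard consequence is then $\pi'(A)'' = \bigoplus_{i=1}^m R_i$. Since each $U(R_i)$ is path-connected, every element of $\prod_i U(R_i) = U(\bigoplus_i R_i)$ has the form $e^{ih}$ for some self-adjoint $h \in \bigoplus_i R_i$. By Kaplansky density, $h$ is an SOT-limit of self-adjoints $\pi'(a_n)$ with $a_n \in A$ self-adjoint and $\|a_n\|\leq \|h\|$, and then $\pi'(e^{ia_n}) = e^{i\pi'(a_n)}$ converges to $e^{ih}$ in SOT because the exponential is SOT-continuous on norm-bounded sets of self-adjoints.

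The second key step is that $\tilde\pi\bigl(\prod_{i=1}^m U(R_i)\bigr)$ generates $\bigotimes_{i=1}^m B_i$ as a von Neumann algebra. This follows from Theorem~\ref{Schur-Weyl1} combined with the standard fact that if $M_i = G_i''$ on $\cH_i$ for sets $G_i$ of unitaries, then $\{\bigotimes_i g_i; g_i \in G_i\}'' = \bigotimes_i M_i$, which one proves by iteratively fixing all but one tensor slot and taking WOT-closed linear spans. Combining the two steps with the continuity of $\tilde\pi$ yields $\pi(U(A))'' \supset \bigotimes_{i=1}^m B_i$; the reverse inclusion is immediate because $\pi(u) \in \bigotimes_i B_i$ for every $u \in U(A)$. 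The tensor product $\bigotimes_{i=1}^m B_i$ of hyperfinite II$_1$ factors is itself a II$_1$ factor.

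For the final assertion, since $J_i\Omega_i = \Omega_i$, the vector $\Xi = \bigotimes_{i=1}^m \Omega_i^{\otimes(p_i+q_i)}$ is a trace vector for $\bigotimes_i B_i$, and the associated vector state of $\pi$ restricted to $U(A)$ is precisely $\prod_{i=1}^m \tau_i(u)^{p_i}\overline{\tau_i(u)}^{q_i}$. The projection onto the cyclic subspace $\overline{\pi(U(A))\Xi}$ lies in the commutant of the factor $\pi(U(A))''$, so the induced GNS representation of $U(A)$ is still a factor representation, and hence $\prod_{i=1}^m \tau_i^{p_i}\overline{\tau_i}^{q_i}$ is an indecomposable character. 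The main obstacle I anticipate is the SOT-density step, which requires passing from disjointness of the $\pi_i$ (at the algebra level) to joint density at the unitary level; the rest is essentially standard von Neumann algebra technology once the single-factor Schur--Weyl theorem has been invoked.
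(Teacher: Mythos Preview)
Your proof is correct and follows essentially the same strategy as the paper: exploit disjointness of the $\pi_i$ to reduce to the single-trace case of Theorem~\ref{Schur-Weyl1}. The only difference is packaging of the density step: the paper works in $A^{**}$ with the central covers $z_i$ of the $\pi_i$, using Kaplansky density to approximate $z_iu+1-z_i$ by unitaries in $A$ and thereby isolate the $i$-th tensor slot, whereas you argue directly in the finite von Neumann algebra $\bigoplus_i R_i$ via the exponential map and Kaplansky density for self-adjoints to get density of $\pi'(U(A))$ in the full product $\prod_i U(R_i)$; these are equivalent, and your version is arguably more explicit on the point you flagged as the main obstacle.
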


\begin{proof} Let $z_i\in A^{**}$ be the central cover of the representation $\pi_i$ (see \cite[3.8.1]{P79}), and let $u\in U(A)$. 
Thanks to the Kaplansky density theorem, there exists a net $\{u_\alpha\}_{\alpha\in \Lambda}$ in $U(A)$ 
converging to $z_iu+1-z_i$ in $U(A^{**})$ in the strong operator topology. 
Then we have the following strong limit: 
$$\lim_\alpha\pi_j(u_\alpha)^{\otimes p_j}\otimes (J_j\pi_j(u_\alpha)J_j)^{\otimes q_j}=
\left\{
\begin{array}{ll}
\pi_j(u)^{\otimes p_j}\otimes (J_j\pi_j(u)J_j)^{\otimes q_j} , &\quad j=i  \\
1 , &\quad j\neq i
\end{array}
\right.
.$$
Thus the statement follows from Theorem \ref{Schur-Weyl1}. 
\end{proof}

\section{Unital simple AF algebras}
Throughout this section, we fix an infinite dimensional unital simple AF algebra $A=\varinjlim A_n$, 
$A_n=\oplus_{i=1}^{N_n}A_{n,i}$ with $A_{n,i}\cong M_{d_{n,i}}(\C)$.  
We assume that $A_n\subset A_{n+1}$ is a unital embedding. 
Let $G_n=U(A_n)$ and $G_{n,i}=U(A_{n,i})\cong U(d_{n,i})$.  
Then $G_n=G_{n,1}\times G_{n,2}\times \cdots \times G_{n,N_n}$, and $U_\to(A)$ is the inductive limit of $\{G_n\}_{n=1}^\infty$ 
We may assume $G_0=\{e\}$.  

It is easy to show the following two lemmas. 

\begin{lemma}\label{dimension} Let $m_n=\inf_{1\leq i\leq N_n}d_{n,i}$. Then $\lim_{n\to\infty}m_n=\infty$. 
\end{lemma}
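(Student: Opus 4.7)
The plan is to show $m_n\to\infty$ by first proving that the sequence $\{m_n\}$ is monotone non-decreasing, and then using the standard simplicity criterion for unital AF algebras together with infinite-dimensionality to rule out boundedness.

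First I would establish monotonicity $m_n\leq m_{n+1}$ directly from the unital inclusion $A_n\subset A_{n+1}$. Let $m^{(n)}_{ij}\geq 0$ denote the partial multiplicity of $A_{n,i}$ in $A_{n+1,j}$. Unitality gives the dimension identity $d_{n+1,j}=\sum_{i}m^{(n)}_{ij}d_{n,i}$ for every $j$. Since $d_{n+1,j}\geq 1$, at least one $m^{(n)}_{ij}$ is positive, so $d_{n+1,j}\geq d_{n,i}\geq m_n$ for that $i$; taking the infimum over $j$ yields $m_{n+1}\geq m_n$.

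The next and crucial step is to exploit simplicity. The simplicity of the unital AF algebra $A$ is equivalent to the condition that for every $n$ there exists $N\geq n$ such that the total partial multiplicity from $A_{n,i}$ into $A_{N,j}$ is positive for every pair $(i,j)$. I would apply this in two distinct ways. Pointwise in $i$: fixing a single summand $A_{n,i}$ and taking $N$ as above gives $d_{N,j}\geq d_{n,i}$ for every $j$, hence $m_N\geq d_{n,i}$. Uniformly in $i$: choosing $N$ so that all pairs work simultaneously gives $d_{N,j}\geq\sum_{i}d_{n,i}\geq N_n$ for every $j$, hence $m_N\geq N_n$.

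To finish, assume for contradiction that $m_n$ is bounded by some constant $M$. The pointwise consequence forces $d_{n,i}\leq m_N\leq M$ for all $n,i$, while the uniform consequence forces $N_n\leq M$ for all $n$. Then $\dim A_n=\sum_{i}d_{n,i}^{2}\leq N_n M^{2}\leq M^{3}$ is uniformly bounded, which combined with injectivity of the connecting maps forces $A_n$ to stabilize and hence $A$ to be finite dimensional, contradicting the standing hypothesis. The main obstacle here is conceptual rather than computational: one must recognize that simplicity yields not only pointwise nontrivial inclusions but also a single $N$ that makes every $A_{n,i}$ connect to every $A_{N,j}$ simultaneously. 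Once this is in hand, the dimension count is immediate and no asymptotic or representation-theoretic input is needed.
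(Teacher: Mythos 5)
Your argument is correct. The paper gives no proof of this lemma (it is dismissed as "easy to show"), and what you write is essentially the intended standard argument: monotonicity of $m_n$ from the unital dimension identity $d_{n+1,j}=\sum_i m^{(n)}_{ij}d_{n,i}$, plus Bratteli's simplicity criterion to force unboundedness, using infinite-dimensionality only to rule out stabilization. The one point you flag but do not spell out -- upgrading the per-summand criterion (for each $i$ some level $N_i$ at which $A_{n,i}$ hits every summand) to a single $N$ working for all $i$ -- is indeed the only place needing a word of justification: since there are finitely many $i$, take $N=\max_i N_i$ and note that, by unitality, every summand at a level $\geq N_i$ receives at least one arrow from level $N_i$, so connectivity of $A_{n,i}$ to all summands propagates forward; with that half-line added, your proof is complete.
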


\begin{lemma} Let $\{\delta_n\}_{n=1}^\infty$ be a sequence of continuous homomorphisms $\delta_n:G_n\to \T$ such that 
the restriction of $\delta_{n+1}$ to $G_n$ is $\delta_n$ for any $n\in \N$. 
Then there exists $\varphi\in\Hom(K_0(A),\Z)$ satisfying $\delta_n=\det_\varphi|_{G_n}$. 
\end{lemma}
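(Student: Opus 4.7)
The plan is to use the classical fact that every continuous homomorphism from a finite-dimensional unitary group to $\T$ is a power of the determinant, and then translate the compatibility of the $\delta_n$'s into the defining relations of the inductive limit $K_0(A)=\varinjlim K_0(A_n)$.

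First, I would decompose $G_n=G_{n,1}\times\cdots\times G_{n,N_n}$ and examine $\delta_n$ on each factor $G_{n,i}\cong U(d_{n,i})$. Since $[U(d),U(d)]=SU(d)$, any continuous homomorphism $U(d)\to\T$ factors through $U(d)/SU(d)\cong\T$ via the determinant, and is thus of the form $u\mapsto(\det u)^k$ for a unique $k\in\Z$. Hence there exist integers $k_{n,i}$ with
\begin{equation*}
\delta_n(u_1,\ldots,u_{N_n})=\prod_{i=1}^{N_n}(\det u_i)^{k_{n,i}}.
\end{equation*}

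Next, I would translate the compatibility $\delta_{n+1}|_{G_n}=\delta_n$ into an arithmetic relation on $(k_{n,i})$ through the Bratteli diagram. Let $m_{ji}$ denote the multiplicity of the embedding $A_{n,i}\hookrightarrow A_{n+1,j}$, so that a minimal projection $p_{n,i}\in A_{n,i}$ has rank $m_{ji}$ when viewed inside $A_{n+1,j}$. Testing the identity $\delta_{n+1}|_{G_n}=\delta_n$ on the unitary $u=\lambda p_{n,i}+(1-p_{n,i})\in G_n$ with $\lambda\in\T$ gives $\delta_n(u)=\lambda^{k_{n,i}}$ on the one hand, while on the other hand $u$ has eigenvalue $\lambda$ of multiplicity $m_{ji}$ in $A_{n+1,j}$, so $\delta_{n+1}(u)=\lambda^{\sum_j m_{ji}k_{n+1,j}}$. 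Equating the exponents yields
\begin{equation*}
k_{n,i}=\sum_{j=1}^{N_{n+1}} m_{ji}\,k_{n+1,j}.
\end{equation*}

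Finally, defining $\varphi_n\in\Hom(K_0(A_n),\Z)$ by $\varphi_n([p_{n,i}])=k_{n,i}$, the relation above is exactly the compatibility condition with the connecting map $K_0(A_n)\to K_0(A_{n+1})$, which sends $[p_{n,i}]$ to $\sum_j m_{ji}[p_{n+1,j}]$. Therefore the $\varphi_n$ assemble into a well-defined $\varphi\in\Hom(K_0(A),\Z)$. A direct check using the spectral decomposition of $u\in G_{n,i}$ shows $\det_\varphi(u)=(\det u)^{k_{n,i}}$, whence $\det_\varphi|_{G_n}=\delta_n$ for every $n$. There is no substantive obstacle here: the argument is essentially a bookkeeping exercise converting the abelianization data of each $U(d_{n,i})$ into $K$-theoretic data dictated by the Bratteli diagram.
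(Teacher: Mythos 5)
Your proof is correct. The paper in fact omits the proof (it is dispatched with ``It is easy to show the following two lemmas''), and your argument --- identifying $\delta_n$ on each factor $U(d_{n,i})$ as $(\det)^{k_{n,i}}$ via the abelianization $U(d)/SU(d)\cong\T$, reading the relation $k_{n,i}=\sum_j m_{ji}k_{n+1,j}$ off the Bratteli multiplicities by testing on $\lambda p_{n,i}+(1-p_{n,i})$, and assembling the resulting $\varphi_n$ into $\varphi\in\Hom(K_0(A),\Z)$ through $K_0(A)=\varinjlim K_0(A_n)$ --- is exactly the standard argument the authors intend.
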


\begin{proof}[Proof of Theorem \ref{main1},(1)] 
Thanks to Theorem \ref{Schur-Weyl2}, we already know that the character 
$\mathrm{det}_\varphi(\prod_{i=1}^p\tau_i)(\prod_{j=1}^q\overline{\tau'_j})$ is indecomposable 
for any $\varphi\in \Hom(K_0(A),\Z)$ and $\tau_i,\tau'_j\in \ex T(A)$  

Let $\chi\in \ex\Char(U_{\to}(A))$. 
Then Theorem \ref{ergodic method} shows that there exist indecomposable characters $\chi_n\in \ex\Char(G_n)$ for $n\geq 0$ 
such that $\chi_n|_{G_{n-1}}$ contains $\chi_{n-1}$ and 
$$\chi(U)=\lim_{n\to\infty}\chi_n(U),\quad U\in U_\to(A),$$
where convergence is uniform on $G_m$ for every $m$. 
We denote by $Q_{n,i}$ the projection of $A_n$ onto $A_{n,i}$.  
Then there exists $\chi_{n,i}\in \ex\Char(G_{n,i})$ such that $\chi_n(U)=\prod_{i=1}^{N_n}\chi_{n,i}(Q_{n,i}(U))$. 
Identifying $G_{n,i}$ with $U(d_{n,i})$, we see that there exist signatures $\Lambda^{(n,i)}\in \widehat{U(d_{n,i})}$ with 
$\chi_{n,i}=\chi_{\Lambda^{(n,i)}}$. 

Thanks to Lemma \ref{dimension}, we see that $A_n$ has a unital copy of either $M_2(\C)$, $M_3(\C)$ or $M_2(\C)\oplus M_3(\C)$ 
for large $n$. 
Thus we may assume that $A_1$ is of one of the above forms. 
If $A_1=M_2(\C)$, we set 
$$F=\left(
\begin{array}{cc}
1 &0  \\
0 &-1 
\end{array}
\right)
\in A_1,$$ 
if $A_1=M_3(\C)$, we set 
$$F=\left(
\begin{array}{ccc}
1 &0 &0  \\
0 &-1 &0  \\
0 &0 &0 
\end{array}
\right)\in A_1,$$ 
and if $A_1=M_2(\C)\oplus M_3(\C)$, we set  
$$F=\left(
\begin{array}{cc}
1 &0  \\
0 &-1 
\end{array}
\right)\oplus 
\left(
\begin{array}{ccc}
1 &0 &0  \\
0 &-1 &0  \\
0 &0 &0 
\end{array}
\right)\in A_1.
$$ 
In any case, we have $\Tr(Q_{n,i}(F^2))\geq 2d_{n,i}/3$, and the conclusion of Lemma \ref{2-4estimate} applies to 
$\chi_{\Lambda_{n,i}}(e^{\sqrt{-1}t Q_{n,i}(F)})$.

As in Section 3.3, we define the $p$-th moments of the probability distributions on $\Z$ given by the Fourier expansions of 
$\chi_n(e^{\sqrt{-1}tF})$ and $\chi_{n,i}(e^{\sqrt{-1}tQ_{n,i}(F)})$ by 
$$m(n,p)=\frac{1}{\sqrt{-1}^p}\frac{d^p}{dt^p}\chi_n(e^{\sqrt{-1}tF})|_{t=0},$$
$$m(n,i,p)=\frac{1}{\sqrt{-1}^p}\frac{d^p}{dt^p}\chi_{n,i}(e^{\sqrt{-1}tQ_{n,i}(F)})|_{t=0}.$$
Since $\chi_{n,i}(e^{\sqrt{-1}tQ_{n,i}(F)})$ is an even function, we have 
$$m(n,2)=\sum_{i=1}^{N_n}m(n,i,2),$$
$$m(n,4)=\sum_{i=1}^{N_n}m(n,i,4)+6\sum_{1\leq i<j\leq N_n}m(2,i,2)m(2,j,2).$$
Thus Lemma \ref{2-4estimate} and Lemma \ref{dimension} show that when $n$ is sufficiently large, we have 
\begin{align*}
m(n,4)&\leq C_1\sum_{i=1}^{N_n}m(n,i,2)^2+C_2\sum_{i=1}^{N_n}m(n,i,2)+3m(n,2)^2\\
 &\leq (C_1+3)m(n,2)^2+C_2m(n,2). 
\end{align*}
Now Lemma \ref{OO} implies that $\{m(n,2)\}_{n=1}^\infty$ is bounded, say $m(n,2)\leq C$ for all $n\geq 1$, 
and Lemma \ref{2moment} implies that there exists $n_0\geq 1$ such such for any $n\geq n_0$, we have  
$$\sum_{i=1}^{N_n}\frac{\Tr(\Lambda^{(n,i)}\rho_{d_{n,i}})}{d_{n,i}}
=\sum_{i=1}^{N_n}\frac{\Tr(\widehat{\Lambda^{(n,i)}}\rho_{d_{n,i}})}{d_{n,i}}\leq C.$$

Let $\Lambda^{(n,i)}=(\Lambda^{(n,i)}_1,\Lambda^{(n,i)}_2,\ldots,\Lambda^{(n,i)}_{d_{n,i}})$. 
When $\Lambda^{(n,i)}_1=\Lambda^{(n,i)}_{d_{n,i}}$, we set $l_{n,i}=0$, and otherwise 
we set $l_{n,i}=\max\{j;\; \Lambda^{(n,i)}_j>\Lambda^{(n,i)}_{d_{n,i}+1-j}\}$. 
Then we have $l_{n,i}\leq d_{n,i}/2$. 
For $n\geq n_0$, 
\begin{align*}
C&\geq \sum_{i=1}^{N_n}\frac{\Tr(\Lambda^{(n,i)}\rho_{d_{n,i}})}{d_{n,i}} 
 =\sum_{i=1}^{N_n}\frac{1}{d_{n,i}}\sum_{j=1}^{l_{n,i}}(\frac{d_{n,i}+1}{2}-j)(\Lambda^{(n,i)}_j-\Lambda^{(n,i)}_{d_{n,i}+1-j})\\
 &\geq \sum_{i=1}^{N_n}\frac{1}{d_{n,i}}\sum_{j=1}^{l_{n,i}}(\frac{d_{n,i}+1}{2}-j)
 \geq \sum_{i=1}^{N_n}\frac{(d_{n,i}-l_{n,i})l_{n,i}}{2d_{n,i}}
 \geq \frac{1}{4}\sum_{i=1}^{N_n}l_{n,i}.
\end{align*}

We choose $n_1\geq n_0$ so that for any $n\geq n_1$ and $i$ we have $16C<d_{n,i}$. 
Assume $n\geq n_1$. 
Then $l_{n,i}<d_{n,i}/4$ for all $i$. 
Let $a_{n,i}=\Lambda^{(n,i)}_{l_{n,i}+1}$. 
Then we have $\Lambda^{(n,i)}_j=a_{n,i}$ for any $l_{n,i}+1\leq j\leq d_{n,i}-l_{n,i}$, 
and there exist partitions $\lambda^{(n,i)}$ and $\mu^{(n,i)}$ with $l_{n,i}=\max\{l(\lambda^{(n,i)}),l(\mu^{(n,i)})\}$ satisfying 
$$\chi_{n,i}(U)=(\det U)^{a_{n,i}}\chi_{\{\overline{\mu^{(n,i)}};\lambda^{(n,i)}\}}(U),\quad U\in U(A_{n,i}).$$ 
Setting 
$$\delta_n(U)=\prod_{i=1}^{N_n}(\det Q_{n,i}(U))^{a_{n,i}},\quad U\in U(A_n)$$
$$\chi'_n(U)=\prod_{i=1}^{N_n} \chi_{\{\overline{\mu^{(n,i)}};\lambda^{(n,i)}\}}(Q_{n,i}(U)),\quad U\in U(A_n).$$
we get $\chi_n=\delta_n\chi'_n$. 
We have 
\begin{align*}
C&\geq \sum_{i=1}^{N_n}\frac{1}{d_{n,i}}\sum_{j=1}^{l_{n,i}}(\frac{d_{n,i}+1}{2}-j)(\lambda^{(n,i)}_j+\mu^{(n,i)}_j)\\
 &\geq \sum_{i=1}^{N_n}\frac{1}{d_{n,i}}\sum_{j=1}^{l_{n,i}}(\frac{d_{n,i}+1}{2}-l_{n,i})(\lambda^{(n,i)}_j+\mu^{(n,i)}_j)
 \geq \frac{1}{4} \sum_{i=1}^{N_n}\sum_{j=1}^{l_{n,i}}(\lambda^{(n,i)}_j+\mu^{(n,i)}_j)\\
 &\geq \frac{1}{4}\sum_{i=1}^{N_n}(|\lambda^{(n,i)}|+|\mu^{(n,i)}|).
\end{align*}
We set $p_n=\sum_{i=1}^{N_n}|\lambda^{(n,i)}|$ and $q_n=\sum_{i=1}^{N_n}|\mu^{(n,i)}|$, which are bounded by $4C$.
 
We claim that the restriction $\delta_{n+1}|_{G_n}$ of $\delta_{n+1}$ to $G_n$ coincides with $\delta_n$ for any $n\geq n_1$, 
and in consequence the restriction $\chi'_{n+1}|_{G_n}$ contains $\chi'_n$. 
Indeed, the restriction $\chi'_{n+1}|_{G_n}$ contains $(\delta_{n+1}|_{G_n})^{-1}\delta_n\chi'_n$ as  
$\chi_{n+1}|_{G_n}$ contains $\chi_n$. 
On the other hand, we have $p_n+q_n\leq 4C$ and $p_{n+1}+q_{n+1}\leq 4C$. 
Now the claim follows from Lemma \ref{branching rule} and $16C<d_{n,i}$. 

Since $\delta_{n+1}|G_n=\delta_n$ for all $n\geq n_1$, there exists $\varphi\in \Hom(K_0(A),\Z)$ satisfying 
$\delta_n(U)=\det_\varphi U$ for any $U\in G_n$ with $n\geq n_1$. 
Lemma \ref{branching rule} implies that two sequences $\{p_n\}_{n=n_1}^\infty$ and 
$\{q_n\}_{n=n_1}^\infty$ are bounded and increasing, and hence eventually constants. 
Thus there exist non-negative integers $p,q$, and $n_2\geq n_1$ satisfying $p_n=p$ and $q_n=q$ for any $n\geq n_2$. 

We assume $n\geq n_2$ and set $I_n=\{i;\;|\lambda^{(n,i)}|+|\mu^{(n,i)}|\neq 0\}$. 
Then $\#I_n\leq 4C$. 
Let $C'_{\lambda,\mu}$ be as in Lemma \ref{asymp2}, and let $C'=\max\{C'_{\lambda,\mu};\;|\lambda|\leq p,\; |\mu|\leq q\}$. 
Let $\tau_{n,i}$ be the normalized trace of $A_{n,i}$. 
Then Lemma \ref{asymp2} implies 
$$|\chi'_n(U)-\prod_{i\in I_n}\tau_{n,i}(Q_{n,i}(U))^{|\lambda^{(n,i)}|}\overline{\tau_{n,i}(Q_{n,i}(U))}^{|\mu^{(n,i)}|}|\leq \frac{4CC'}{m_n},$$
for any $U\in G_n$. 
Thus for any $U\in U_\to(A)$, we have 
$$\chi(U)=\det{}_\varphi U\lim_{n\to\infty}\prod_{i\in I_n}\tau_{n,i}(Q_{n,i}(U))^{|\lambda^{(n,i)}|}\overline{\tau_{n,i}(Q_{n,i}(U))}^{|\mu^{(n,i)}|}.$$
Note that we can extend $\tau_{n,i}\circ Q_{n,i}$ to a state of $A$. 
Thus there exist states $\omega_{n,j}\in S(A)$, $\omega'_{n,k}\in S(A)$ for $1\leq j\leq p$, $1\leq k\leq q$ whose 
restrictions to $A_n$ are tracial states such that 
$$\chi(U)=\det{}_\varphi U\lim_{n\to\infty}\prod_{j=1}^p\omega_{n,i}(U)\prod_{k=1}^q
\overline{\omega'_{n,k}(U)}.$$
Since any cluster points of $\{\omega_{n,j}\}_{n=n_2}^\infty$ and $\{\omega'_{n,k}\}_{n=n_2}^\infty$ in the weak* topology 
are tracial states, there exist $\tau_j, \tau'_k\in T(A)$ for $1\leq j\leq p$, $1\leq k\leq q$ satisfying 
$$\chi(U)=\det{}_\varphi U\prod_{j=1}^p\tau_j(U)\prod_{k=1}^q
\overline{\tau'_k(U)}.$$
Since $\chi$ is indecomposable, we conclude $\tau_j,\tau'_k\in \ex T(A)$. 
\end{proof}

\begin{lemma}\label{continuity1} Let $\varphi\in \Hom (K_0(A),\Z)$. 
Then $\det_\varphi$ extends to a continuous homomorphism from $U(A)$ to $\T$ if and only if $\varphi=0$. 
\end{lemma}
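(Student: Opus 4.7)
The $\Leftarrow$ direction is immediate: when $\varphi=0$, the map $\det_\varphi$ is identically $1$, which is obviously continuous on $U(A)$.

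For the $\Rightarrow$ direction, my plan is to argue by contradiction: assuming $\varphi\neq 0$, I will exhibit a sequence $u_n\in U_\to(A)$ with $u_n\to 1$ in norm but $\det_\varphi(u_n)=-1$ for all $n$. Once such $u_n$ is constructed, any continuous homomorphic extension $\chi:U(A)\to\T$ of $\det_\varphi$ would satisfy $\chi(u_n)\to\chi(1)=1$, contradicting $\chi(u_n)=-1$. The building blocks for the $u_n$ are projections with large $\varphi$-value: given a projection $p\in A$ with $\varphi([p])=k\neq 0$, the unitary
\[
u=e^{\pi\sqrt{-1}/k}p+(1-p)\in U_\to(A)
\]
satisfies $\|u-1\|=|e^{\pi\sqrt{-1}/k}-1|$ and $\det_\varphi(u)=e^{\pi\sqrt{-1}}=-1$. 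So everything reduces to producing projections $p_n\in A$ with $|\varphi([p_n])|\to\infty$.

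This existence statement is the main obstacle, and I would establish it in two steps. First, I claim that for every $\delta>0$ there is a projection $p\in A$ with $\sup_{\tau\in T(A)}\tau(p)<\delta$ and $\varphi([p])\neq 0$. Indeed, suppose on the contrary that $\varphi([q])=0$ whenever $\sup_\tau\tau(q)<\delta$. Any projection in $A$ lies in some $A_m$, so it decomposes as a finite sum of orthogonal minimal projections of the matrix summands $A_{m,j}$. For each normalized trace $\tau$ on $A$, a minimal projection of $A_{m,j}$ has trace $\tau(1_{A_{m,j}})/d_{m,j}\leq 1/d_{m,j}$, and by Lemma \ref{dimension}, taking $m$ large enough forces $1/d_{m,j}<\delta$ uniformly in $j$. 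Thus every projection in $A$ can be written as a sum of projections each of $\sup_\tau\tau$-value $<\delta$, forcing $\varphi=0$ on all projection classes. Since $K_0(A)=\varinjlim K_0(A_n)$ is generated as a group by projection classes in the $A_n$'s, this contradicts $\varphi\neq 0$.

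Second, having fixed such a $p$ with $\sup_\tau\tau(p)<1/(n+1)$ for any target $n$, the simple dimension group $K_0(A)$ satisfies the comparison property that $n[p]<[1_A]$ holds in $K_0(A)$ whenever $n\tau(p)<1$ for all $\tau\in T(A)$; thus $n[p]\leq [1_A]$, and we may find orthogonal projections $p^{(1)},\ldots,p^{(n)}\in A$ each equivalent to $p$. Their sum $P_n=\sum_{i=1}^n p^{(i)}$ is a projection in $A$ with $\varphi([P_n])=n\varphi([p])$, so $|\varphi([P_n])|\geq n$. Letting $n\to\infty$ produces the required sequence of projections, completing the argument.
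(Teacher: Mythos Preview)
Your argument is correct in substance, though two small imprecisions should be tightened. First, the sentence ``any projection in $A$ lies in some $A_m$'' is false as stated; what is true (and what you actually need) is that every class in $K_0(A)=\varinjlim K_0(A_m)$ is represented by a projection in some $A_m$, which suffices to conclude $\varphi=0$. Second, the projection $P_n$ you build by stacking orthogonal copies need not lie in $\bigcup_m A_m$, so $\det_\varphi(u_n)$ is not literally defined; but since any projection in a unital AF algebra is unitarily equivalent to one in some $A_m$, and your hypothesized continuous extension is a homomorphism into the abelian group $\T$, you may conjugate $P_n$ into $\bigcup_m A_m$ without changing the value. With these fixes the proof is complete.

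The paper's proof rests on the same observation---unbounded values of $\varphi$ on projections obstruct norm continuity of $u\mapsto z^{\varphi([e])}$---but reaches the unboundedness far more directly. Instead of finding a trace-small projection $p$ with $\varphi([p])\neq 0$ and invoking strict comparison to stack $n$ copies, the paper simply notes that $1_{A_{n,i}}$ is already a sum of $d_{n,i}$ equivalent minimal projections, so $\varphi([1_{A_{n,i}}])=d_{n,i}\,\varphi([e_i])$. If the range of $\varphi$ on projections were bounded by $C$, then once $m_n>C$ (Lemma~\ref{dimension}) every minimal projection in $A_n$ would have $\varphi$-value $0$, forcing $\varphi=0$. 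This avoids the detour through strict comparison and the auxiliary construction of $P_n$; your route works but is more elaborate than necessary.
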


\begin{proof} For a projection $e\in \cup_{n=1}^\infty A_n$ and $z\in \T$, we have 
$\det_\varphi (ze+1_A-e)=z^{\varphi([e])}$. 
Thus if $\det_\varphi$ continuously extends to $U(A)$, the set $\{\varphi([e]);\;e\in \Proj(A)\}$ is bounded. 
Thus $m_n>\sup\{|\varphi([e])|;\;e\in \Proj(A)\}$ for sufficiently large $n$. 
Since $\varphi([1_{A_{n,i}}])$ is a multiple of $d_{n,i}$, we have $\varphi([e])=0$ for any $e\in \Proj(A_n)$ 
for sufficiently large $n$, which shows $\varphi=0$. 
\end{proof}

\begin{proof}[Proof of Theorem \ref{main1},(2)] Since $U_\to(A)$ is dense in $U(A)$ and the embedding map 
of $U_\to(A)$ into $U(A)$ is continuous, the statement follows from (1) and Lemma \ref{continuity1}. 
\end{proof}

\begin{example}
Let $A$ be the CAR algebra. 
Then $U_\to(A)$ is the group $U(2^\infty)$ discussed in \cite{B93}. 
In this case $T(A)$ is a singleton $\{\tau\}$, and $\Hom(K_0(A),\Z)=\{0\}$ as $K_0(A)\cong\Z[1/2]$. 
Thus 
$$\ex \Char(U(2^\infty))=\ex\Char U(A)=\{\tau^p\overline{\tau}^q;\; p,q\in \Z_{\geq 0}\}.$$
Our argument also works for the inductive limit group $\varinjlim SU(2^n)$ with the inclusion map 
$$SU(2^n)\ni U\mapsto \left(
\begin{array}{cc}
U &0  \\
0 &U 
\end{array}
\right)\in SU(2^{n+1}),
$$
and the conclusion is the same. 
\end{example}

\begin{example} For any irrational number $0<\theta<1$, there exists a unital simple AF algebra $B_\theta$ with 
$$(K_0(B_\theta),K_0(B_\theta)_+,[1])\cong (\Z+\theta \Z,(\Z+\theta\Z)\cap [0,\infty),1),$$
(see \cite[VI.3]{D96}). 
In this case $T(B_\theta)$ is a singleton $\{\tau\}$, and $\Hom(K_0(B_\theta),\Z)\cong \Z^2$. 
Thus we get $\ex\Char(U_\to(B_\theta))\cong \Z^2\times \Z_{\geq 0}^2$ and $\ex\Char(U(B_\theta))\cong \Z_{\geq 0}^2$. 
\end{example}

In the rest of this section, we discuss integral decomposition of characters in $\Char(U(A))$. 

\begin{lemma} Let $p,q$ be non-negative integers, and let 
$\varphi\in \Hom(K_0(A),\Z)$. 
We set 
$$\ex_{\varphi}^{p,q}\Char(U_\to(A))=\{\mathrm{det}_\varphi(\prod_{i=1}^p\tau_i)(\prod_{j=1}^q\overline{\tau'_j})
\in \Char(U_\to(A)); \; \tau_i,\tau'_j\in \ex T(A)\}.$$ 
$(1)$
Let $\chi\in \ex_{\varphi}^{p,q}\Char(U_\to(A))$. 
An indecomposable character of $G_n$ is contained in $\chi|_{G_n}$ if and only if it is of the form 
$\det_\varphi\prod_{i=1}^n\chi_{\{\overline{\mu^{(n,i)}},\lambda^{(n,i)}\}}\circ Q_{n,i}$ with 
$$\sum_{i=1}^{N_n}|\lambda^{(n,i)}|\leq p,\quad \sum_{i=1}^{N_n}|\mu^{(n,i)}|\leq q,$$
$$\sum_{i=1}^{N_n}|\lambda^{(n,i)}|-\sum_{i=1}^{N_n}|\mu^{(n,i)}|=p-q.$$
$(2)$ $\ex_{\varphi}^{p,q}\Char(U_\to(A))$ is a Borel subset of $\ex\Char(U_\to(A))$. 
\end{lemma}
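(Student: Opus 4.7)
Part (1). My plan is to expand $\chi|_{G_n}$ explicitly and read off its decomposition. Since $A$ is simple and unital, each $\tau_i \in \ex T(A)$ is faithful; its restriction to $A_{n,k} \cong M_{d_{n,k}}(\C)$ equals $c_{i,k}\tau_{n,k}$ with $c_{i,k} = \tau_i(1_{A_{n,k}}) > 0$ and $\tau_{n,k}$ the normalized trace. For $\det_\varphi$, the identity $[1_{A_{n,k}}] = d_{n,k}[p_{n,k}]$ for a rank-one projection $p_{n,k}\in A_{n,k}$ yields $\det_\varphi|_{G_{n,k}} = (\det)^{\alpha_{n,k}}$ with $\alpha_{n,k} = \varphi([p_{n,k}]) \in \Z$. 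Writing $U_k = Q_{n,k}(U)$ and expanding $\chi(U) = \det_\varphi(U)\prod_i \tau_i(U)\prod_j \overline{\tau'_j(U)}$ by distributing the products over $k$, I obtain
\[
\chi|_{G_n}(U) = \sum_{(n_k),(m_k)} C_{(n_k),(m_k)} \prod_{k=1}^{N_n}(\det U_k)^{\alpha_{n,k}}\tau_{n,k}(U_k)^{n_k}\overline{\tau_{n,k}(U_k)}^{m_k},
\]
summed over tuples $(n_k),(m_k)$ with $\sum_k n_k = p$, $\sum_k m_k = q$ and strictly positive coefficients $C_{(n_k),(m_k)}$. Classical $U(d)$ theory identifies $(\det)^{\alpha}\tau_d^n \overline{\tau_d}^m$ as a strictly positive combination of $\chi^{(d)}_\Lambda$ for $\Lambda = \alpha(1,\ldots,1) + \{\overline{\mu};\lambda\}$ with $(\lambda,\mu)$ ranging over those satisfying $|\lambda|\leq n$, $|\mu|\leq m$, $|\lambda|-|\mu|=n-m$, $l(\lambda)+l(\mu)\leq d$. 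Applying this block by block and summing over $k$ yields the ``only if'' direction of (1). For the ``if'' direction, given admissible $(\lambda^{(n,k)},\mu^{(n,k)})$ I set $n_k = |\lambda^{(n,k)}|+e_k$, $m_k = |\mu^{(n,k)}|+e_k$ with $e_k\geq 0$ satisfying $\sum_k e_k = p-\sum_k|\lambda^{(n,k)}| = q-\sum_k|\mu^{(n,k)}|$ (consistent by the equality $p-q=\sum_k(|\lambda^{(n,k)}|-|\mu^{(n,k)}|)$); the coefficient of the prescribed indecomposable character in $\chi|_{G_n}$ is then a product of strictly positive factors.

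Part (2). I would characterize $\ex^{p,q}_\varphi\Char(U_\to(A))$ inside $\ex\Char(U_\to(A))$ via the finite allowed set $E_n(p,q,\varphi)\subset \widehat{G_n}$ described in (1). For each $\eta\in\widehat{G_n}$ the multiplicity $c_\eta(\chi)$ of $\eta$ in the unique decomposition of $\chi|_{G_n}$ is obtained by integrating $\chi$ against a fixed continuous function on $G_n$, and is therefore continuous in $\chi\in\Char(U_\to(A))$ by definition of its topology. By (1), $\chi\in\ex^{p,q}_\varphi$ iff $S_{n,\chi} = E_n(p,q,\varphi)$ for every $n$; for fixed $n$ this is the intersection of an open condition ($c_\eta>0$ for each $\eta$ in the finite set $E_n$) with a closed condition ($c_\eta=0$ for $\eta\notin E_n$), hence a Borel condition. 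The countable intersection over $n$ remains Borel. The equivalence of the two descriptions uses Theorem~\ref{main1}(1) together with the observation that $\{E_n(p,q,\varphi)\}_n$ determines $(p,q,\varphi)$ once $n$ is large: the determinantal shift on the signatures reads off the $\alpha_{n,k}$ (and hence $\varphi$, by Lemma~\ref{dimension}), while $p,q$ are detected by the extremal values of $\sum_k|\lambda^{(n,k)}|$ and $\sum_k|\mu^{(n,k)}|$ attained on $E_n$.

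The main obstacle is the positivity bookkeeping in part (1): one must keep track that both the combinatorial coefficient $C_{(n_k),(m_k)}$ (positive because $A$ is simple, forcing $c_{i,k},c'_{j,k}>0$) and the local multiplicity in the $U(d_{n,k})$-decomposition of $(\det)^{\alpha_{n,k}}\tau_{n,k}^{n_k}\overline{\tau_{n,k}}^{m_k}$ (positive by classical invariant theory) remain strictly positive for the choice of $(n_k),(m_k)$ used in the ``if'' direction. In part (2), the delicate point is the uniqueness of $(p,q,\varphi)$ from the family $\{E_n(p,q,\varphi)\}$, which is where Lemma~\ref{dimension} is essential.
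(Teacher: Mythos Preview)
Your proof of part (1) is correct and essentially the same as the paper's, which dispatches it in one line by citing King's formula \cite[(3.4)]{King75} and the faithfulness of traces on a simple $C^*$-algebra; you have simply unpacked that citation into the explicit multinomial expansion and the classical $U(d)$ decomposition of $(\det)^\alpha\tau_d^n\overline{\tau_d}^m$.

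Your approach to part (2) is correct but genuinely different from the paper's. The paper works inside the Stratila--Voiculescu AF algebra $\fA(U_\to(A))$: it observes via Lemma~\ref{kernel and quotient} that the primitive ideal $J_\varphi^{p,q}=\ker\pi_{\tr_\chi}$ depends only on $(p,q,\varphi)$, then proves the identity
\[
\bigcup_{i=0}^{\min\{p,q\}}\ex_\varphi^{p-i,q-i}\Char(U_\to(A))=\{\chi\in \ex\Char(U_\to(A));\; \widehat{z_{J_\varphi^{p,q}}}(\tr_\chi)=0\},
\]
which exhibits the left-hand side as a closed (hence Borel) subset since $\widehat{z_J}$ is lower semicontinuous; the individual $\ex_\varphi^{p,q}$ is then a set-theoretic difference of two such unions. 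Your argument bypasses the AF-algebra machinery entirely: you read the Fourier coefficients $c_\eta(\chi)$ directly as continuous functionals on $\Char(U_\to(A))$ and encode $\ex_\varphi^{p,q}$ by the countably many locally closed conditions $S_{n,\chi}=E_n(p,q,\varphi)$. The cost of your route is that you must verify the injectivity of $(p,q,\varphi)\mapsto\{E_n(p,q,\varphi)\}_n$, which you correctly flag and which follows once $\min_k d_{n,k}$ exceeds $p+q$ (so that the plateau value of every signature in $E_n$ recovers $\alpha_{n,k}$, after which $p$ and $q$ are read off from the extremal entries). The paper's route avoids this bookkeeping by working with the nested unions, but in exchange invokes the structural results on $\fA(G)$ from Section~2.2. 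Both arguments are valid; yours is more elementary, the paper's is more structural and ties the Borel structure to the primitive ideal space.
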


\begin{proof} (1) The statement follows from \cite[(3.4)]{King75} and the fact that any trace in $T(A)$ is faithful as $A$ is simple. 

(2) Pick $\chi\in \ex_\varphi^{p,q}(\Char(U_\to(A)))$, and set $J_\varphi^{p,q}=\ker \pi_{\tr_\chi}$, 
which is a primitive ideal of $\fA(U_\to(A))$. 
(1) and Lemma \ref{kernel and quotient} show that $J_\varphi^{p,q}$ does not depend on the choice of $\chi$. 
We claim 
\begin{equation}\label{Borel}
\bigcup_{i=0}^{\min\{p,q\}}\ex_\varphi^{p-i,q-i}\Char(U_\to(A))=\{\chi\in \ex\Char(U_\to(A));\; 
\widehat{z_{J_\varphi^{p,q}}}(\tau_\chi)=0\},
\end{equation}
which would imply the statement. 

Let $\omega\in \ex_\varphi^{p-i,q-i}\Char(U_\to(A))$. 
(1) implies the inclusion relation $J_\varphi^{p,q}\subset J_\varphi^{p-i,q-i}$, and hence 
$0\leq \widehat{z_\varphi^{p,q}}\leq \widehat{z_\varphi^{p-i,q-i}}\leq 1$. 
Since $\widehat{z_\varphi^{p-i,q-i}}(\tr_\omega)=0$, we get $\widehat{z_\varphi^{p,q}}(\tr_\omega)=0$, 
and $\omega$ belongs to the right-hand side of (\ref{Borel}). 

Assume that $\omega$ belongs to the right-hand side of (\ref{Borel}) conversely. 
Then the trace $\tau_\omega$ factors through the primitive quotient $\fA(U_\to(A))/J_\varphi^{p,q}$ whose structure 
is described in Lemma \ref{kernel and quotient}. 
Thus the Vershik-Kerov ergodic method gives a sequence of indecomposable characters $\omega_n\in \ex \Char(G_n)$ converging $\omega$ 
such that each $\omega_n$ is as in (1). 
Now the proof of Theorem \ref{main1},(1) shows $\omega\in \bigcup_{i=0}^{\min\{p,q\}}\ex_\varphi^{p-i,q-i}\Char(U_\to(A))$. 
\end{proof}

We set 
$$\ex_0\Char(U_\to(A))=\bigcup_{p,q\geq 0}\ex_0^{p,q}\Char(U_\to(A)).$$

\begin{lemma}\label{continuity2} A character $\omega\in \Char(U_\to(A))$ extends to a character of $U(A)$ 
if and only if there exists a probability measure $\nu$ on $\ex_0\Char(U_\to(A))$ satisfying 
$$\omega=\int_{\ex_0\Char(U_\to(A))}\chi d\nu(\chi).$$
\end{lemma}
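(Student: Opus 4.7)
The plan is to prove both directions by combining the uniqueness of the integral decomposition of $\omega$ over $\ex\Char(U_\to(A))$ (from Section 2.2) with a central disintegration argument applied to the GNS representation of an extension.

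For the ``if'' direction, I would observe that each $\chi \in \ex_0 \Char(U_\to(A))$ is a finite product of traces from $\ex T(A)$ and their complex conjugates, all of which are norm-continuous on $A$; hence each such $\chi$ extends uniquely to a norm-continuous character on $U(A)$. Then I would define
$$\tilde\omega(u) := \int_{\ex_0 \Char(U_\to(A))} \chi(u) \, d\nu(\chi), \quad u \in U(A),$$
and use the uniform bound $|\chi(u)| \leq 1$ together with the dominated convergence theorem to verify that $\tilde\omega$ is norm-continuous on $U(A)$; positive definiteness and conjugation invariance pass through the integral, yielding $\tilde\omega \in \Char(U(A))$ as the required extension of $\omega$.

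For the ``only if'' direction, I would begin with the GNS triple $(\pi,\cH,\Omega)$ of a character $\tilde\omega \in \Char(U(A))$ extending $\omega$, and set $M := \pi(U(A))''$, a finite von Neumann algebra with faithful normal tracial state $\tau(\cdot) = \inpr{\cdot\Omega}{\Omega}$. Because $U_\to(A)$ is norm-dense in $U(A)$ and $\pi$ is SOT-continuous, one has $M = \pi(U_\to(A))''$, so $\pi|_{U_\to(A)}$ realizes the GNS of $\omega$, and $M$ has separable predual since $A$ is separable. The next step is to carry out the central disintegration $M = \int^\oplus_X M_x \, d\mu(x)$ with each $M_x$ a factor, together with $\tau = \int \tau_x \, d\mu(x)$ and $\pi = \int^\oplus \pi_x \, d\mu(x)$. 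After trimming the countably many null sets arising from a countable norm-dense subgroup of $U(A)$, for $\mu$-a.e. $x$ the map $\pi_x$ is an SOT-continuous factor representation of $U(A)$, and $\chi_x(u) := \tau_x(\pi_x(u))$ defines an element of $\ex\Char(U(A))$ satisfying
$$\tilde\omega(u) = \int_X \chi_x(u) \, d\mu(x), \quad u \in U(A).$$
Theorem \ref{main1}(2) identifies $\ex\Char(U(A))$ with $\ex_0 \Char(U_\to(A))$, so the pushforward $\nu' := \mu \circ (x \mapsto \chi_x)^{-1}$ is a probability measure supported on $\ex_0 \Char(U_\to(A))$. Restricting the integral to $U_\to(A)$ provides a second integral representation $\omega = \int \chi \, d\nu'$ over $\ex\Char(U_\to(A))$, and the uniqueness of such decompositions (from Section 2.2) forces $\nu = \nu'$; in particular $\nu$ is concentrated on $\ex_0 \Char(U_\to(A))$.

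The main obstacle will be making the central disintegration step rigorous, specifically arranging the fiber representations $\pi_x$ to be individually SOT-continuous homomorphisms on the full group $U(A)$ (rather than merely on a countable dense subgroup) for a $\mu$-conull set of $x$. Separability of $A$ is essential here, ensuring that countably many null sets suffice to cover all of $U(A)$. Once this standard but delicate construction is in place, the remainder of the argument is a clean combination of Theorem \ref{main1}(2) with the uniqueness of the integral decomposition.
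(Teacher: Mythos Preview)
Your ``if'' direction is fine and matches the paper. The ``only if'' direction via central disintegration is a natural idea, but the obstacle you flag is more serious than you suggest. Separability of $A$ lets you discard countably many null sets so that $\pi_x$ is a homomorphism on a countable dense subgroup $\Gamma\subset U(A)$, but this does \emph{not} by itself give SOT-continuity of $\pi_x$ on all of $U(A)$: a homomorphism defined on a dense subgroup need not extend continuously, and the direct-integral construction only gives you $\pi_x(u)$ defined $\mu$-a.e.\ for each fixed $u$, not a Borel map $u\mapsto\pi_x(u)$ for a.e.\ $x$. To invoke Theorem~\ref{main1}(2) you need $\chi_x\in\Char(U(A))$, i.e.\ genuinely norm-continuous, and closing this gap would require either a joint-measurability argument followed by automatic continuity (Pettis), or some substitute---neither of which is routine here.

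The paper bypasses this entirely with a much more elementary argument. Starting from the unique decomposition $\omega=\sum_{\varphi,p,q}\int_{\ex_\varphi^{p,q}\Char(U_\to(A))}\chi\,d\nu(\chi)$ and the explicit form $\chi=\det_\varphi\cdot(\text{trace product})$ from Theorem~\ref{main1}(1), one tests against unitaries $u=ze+1-e$ with $e\in\Proj(\cup_nA_n)$ and $z\in\T$. If $\nu(\ex_\varphi^{p,q})>0$ for some $\varphi\neq 0$, then (since $\varphi$ is unbounded on projections) one can choose $e$ and $z$ with $|z-1|$ arbitrarily small but $z^{\varphi([e])}=-1$; the trace-product factor stays close to $1$, forcing $1-\Re\chi(ze+1-e)\geq 1$ on $\ex_\varphi^{p,q}$, hence $1-\Re\omega(ze+1-e)\geq\nu(\ex_\varphi^{p,q})$, contradicting norm-continuity of the extension. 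This avoids any disintegration-theoretic subtleties and uses only the concrete classification already in hand.
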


\begin{proof} It is easy to see that $\omega\in \Char(U_\to(A))$ with a measure $\nu$ as in the statement 
continuously extends to $U(A)$. 

Let $\omega$ be a character of $U_\to(A)$ with the integral decomposition 
$$\omega=\int_{\ex \Char(U_\to(A))} \chi d\nu(\chi)=\sum_{\varphi\in \Hom(K_0(A),\Z)}\sum_{p,q\geq 0}
\int_{\ex_\varphi^{p,q}\Char(U_\to(A))}\chi d\nu(\chi),$$
with $\nu(\ex_\varphi^{p,q}\Char(U_\to(A)))>0$ for some $\varphi\in \Hom(K_0(A),\Z)\setminus \{0\}$ and $p,q\geq 0$. 
For any projection $e$ in $\cup_{n=1}^\infty A_n$ and $z\in \C$ with $|z|=1$, we have 
$$1-\Re \omega(ze+1-e)\geq \int_{\ex_\varphi^{p,q}\Char(U_\to(A))}(1-\Re\chi(ze+1-e)) d\nu(\chi).$$ 
We assume that $\omega$ extends continuously to $U(A)$, and deduce contradiction. 

Since $\omega$ is norm continuous, for any $0<\varepsilon<\nu(\ex_0^{p,q}\Char(U_\to(A)))$, 
there exists $\delta>$ such that whenever $z\in \T$ satisfies $|z-1|<\delta$, we have 
$1-\Re \omega(ze+1-e)<\varepsilon$. 
For any $\chi\in \ex_\varphi^{p,q}\Char(U_\to(A))$, there exist $\tau_i,\tau'_j\in \ex T(A)$ satisfying 
$$\chi(ze+1-e)=z^{\varphi([e])}\prod_{i=1}^p\tau_i(ze+1-e)\prod_{j=1}^q\overline{\tau'_j(ze+1-e)}.$$
Thus there exists $0<\delta_1<\delta$, depending only on $p$ and $q$, such that if $|z-1|<\delta_1$, 
then 
$$\Re \prod_{i=1}^p\tau_i(ze+1-e)\prod_{j=1}^q\overline{\tau'_j(ze+1-e)}>1/2.$$
Since $\varphi$ is not trivial, we can find $e$ and $z$ satisfying $|z-1|<\delta_1$ and 
$z^{\varphi([e])}=-1$. 
For such $e$ and $z$, we have 
$$\varepsilon>1-\Re \omega(ze+1-e)\geq \int_{\ex_\varphi^{p,q}\Char(U_\to(A))}d\nu(\chi)=\nu(\ex_\varphi^{p,q}\Char(U_\to(A))),$$
which is contradiction. 
\end{proof}

Since $U_\to(A)$ is dense in $U(A)$ and the embedding $U_\to(A)\subset U(A)$ is continuous, 
we identify $\chi\in \Char(U(A))$ with its restriction to $U_\to(A)$. 
Thanks to Lemma \ref{continuity2}, we can identify $\ex_0\Char(U_\to(A))$ with $\ex \Char(U(A))$, and 
regard the latter as a Borel space.  

\begin{cor} \label{decomposition} 
For any character $\omega\in \Char(U(A))$, there exists a unique probability measure 
$\nu$ on $\ex \Char(U(A))$ satisfying $\omega(g)=\int_{\ex \Char(U(A))}\chi(g)d\nu(\chi)$ 
for any $g\in U(A)$. 
\end{cor}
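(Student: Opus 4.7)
The plan is to deduce the corollary by importing the Choquet-style integral decomposition already established for $\ex\Char(U_\to(A))$ via the Stratila-Voiculescu AF algebra $\fA(U_\to(A))$, and then restricting attention to the Borel subset $\ex_0\Char(U_\to(A))$, which Lemma \ref{continuity2} identifies with $\ex\Char(U(A))$.

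First, given $\omega\in \Char(U(A))$, I would restrict it to the dense subgroup $U_\to(A)$. Since the inclusion $U_\to(A)\hookrightarrow U(A)$ is continuous, $\omega|_{U_\to(A)}$ is a character of the inductive limit group $U_\to(A)=\varinjlim U(A_n)$. Applying the decomposition lemma for Stratila-Voiculescu AF algebras produces a unique probability measure $\tilde\nu$ on $\ex\Char(U_\to(A))$ with
$$\omega(g)=\int_{\ex\Char(U_\to(A))}\chi(g)\,d\tilde\nu(\chi),\qquad g\in U_\to(A).$$

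Second, because $\omega$ extends continuously to $U(A)$, Lemma \ref{continuity2} furnishes \emph{some} probability measure supported on $\ex_0\Char(U_\to(A))$ that represents $\omega|_{U_\to(A)}$. Uniqueness of $\tilde\nu$ as a probability measure on $\ex\Char(U_\to(A))$ then forces $\tilde\nu$ itself to be supported on $\ex_0\Char(U_\to(A))$. Using the Borel identification $\ex_0\Char(U_\to(A))\cong \ex\Char(U(A))$ already recorded immediately before the corollary, I would transport $\tilde\nu$ to a probability measure $\nu$ on $\ex\Char(U(A))$.

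Third, I would extend the integral identity from $U_\to(A)$ to all of $U(A)$: for $g\in U(A)$, approximate by a sequence in $U_\to(A)$ in the norm topology and pass to the limit using dominated convergence, with the uniform bound $|\chi(\cdot)|\leq 1$ as the dominating function, together with norm-continuity of $\omega$ and of each $\chi\in \ex\Char(U(A))$. Uniqueness of $\nu$ then follows from uniqueness of $\tilde\nu$: any other representing probability measure $\nu'$ on $\ex\Char(U(A))$ would, via the Borel identification, produce a second measure on $\ex_0\Char(U_\to(A))\subset \ex\Char(U_\to(A))$ representing $\omega|_{U_\to(A)}$, contradicting the uniqueness part of the Stratila-Voiculescu decomposition.

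No step is expected to be genuinely difficult; the substance is already contained in Lemma \ref{continuity2}, in the Borel identification, and in the Stratila-Voiculescu decomposition lemma. The only point that needs any attention is confirming that the supporting argument for concentration of $\tilde\nu$ on $\ex_0\Char(U_\to(A))$ can indeed be cleanly upgraded from existence of \emph{some} such measure (as in Lemma \ref{continuity2}) to the specific measure $\tilde\nu$; this is achieved by invoking the uniqueness clause of the Stratila-Voiculescu decomposition.
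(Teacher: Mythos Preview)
Your proposal is correct and follows essentially the same route as the paper's proof: invoke Lemma~\ref{continuity2} together with the uniqueness clause of the Stratila--Voiculescu decomposition to obtain a unique representing measure on $\ex_0\Char(U_\to(A))\cong\ex\Char(U(A))$ valid on $U_\to(A)$, and then pass to all of $U(A)$ by bounded (dominated) convergence. The paper compresses your steps~1--4 into a single sentence citing Lemma~\ref{continuity2}, but the logic is identical.
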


\begin{proof}
Lemma \ref{continuity2} shows that for any $\omega\in \Char(U(A))$, 
there exists a unique probability measure $\nu$ on $\ex \Char(U(A))$ 
satisfying $\omega(g)=\int_{\ex \Char(U(A))}\chi(g)d\nu(\chi)$ for any $g\in U_\to(A)$. 
Now the statement follows from the bounded convergence theorem. 
\end{proof}

For later use, we set 
$$\ex^{p,q}\Char(U(A))=\{(\prod_{i=1}^p\tau_i)(\prod_{j=1}^q\overline{\tau'_j}); \; 
\tau_i,\tau'_j\in \ex T(A)\},$$
which is a Borel subset of $\ex\Char(U(A))$. 
\section{Unital simple $C^*$-algebras of tracial topological rank 0}

Throughout this section, we fix a separable infinite dimensional unital simple exact $C^*$-algebra $A$ 
of tracial topological rank 0. 
The reader is referred to \cite{L01} for the definition of tracial topological rank and the basics of 
this class of $C^*$-algebras. 
We recall that $A$ has real rank 0, stable rank 1, and that $K_0(A)$ is weakly unperforated. 
In consequence, we can identify $U(A)/U(A)_0$ with $K_1(A)$, and two projections in $A$ have the 
same $K_0$-class if and only if they are Murray-von Neumann equivalent. 
Since $A$ is exact, every quasi-trace on $A$ is a trace (see \cite{HT99} for the proof),  
and the trace space $T(A)$ is identified with the state space of 
the scaled ordered group $(K_0(A),K_0(A)_+,[1])$  (see \cite[Theorem 6.9.1]{Bl98}). 

When $K_0(A)$ is torsion-free, it is unperforated, and it is a simple dimension group 
(see \cite[Proposition 3.3.7, Theorem 3.3.18]{L01}, \cite[Theorem IV.7.2]{D96}). 
Thus there exists a unital embedding of a unital simple AF algebra $B$ into $A$ that 
induces an isomorphism of the $K_0$-groups. 
The restriction map from $T(A)$ to $T(B)$ is a bijection.

\begin{lemma} \label{density} 
For any $u\in U(A)$, there exists a sequence $\{u_n\}_{n=1}^\infty$ in $U(A)_0$ such that 
the sequence $\{\|u-u_n\|_\tau\}_{n=1}^\infty$ converges to 0 for any $\tau\in T(A)$.  
\end{lemma}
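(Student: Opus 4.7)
The plan is to use the tracial-rank-0 hypothesis to cut $u$ down to a finite-dimensional subalgebra (whose unitary group is automatically connected), while throwing away only a piece of small trace.

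First I would invoke the definition of tracial topological rank $0$: given $u\in U(A)$ and $\epsilon>0$, there exist a projection $p\in A$ and a finite-dimensional $C^*$-subalgebra $B\subset A$ with $1_B=p$ such that $\|pu-up\|<\epsilon$, $\mathrm{dist}(pup,B)<\epsilon$, and, using simplicity of $A$ together with compactness of $T(A)$, $\tau(1-p)<\epsilon$ for every $\tau\in T(A)$. Next I would pick $b\in B$ with $\|pup-b\|<\epsilon$. The approximate commutation $\|pu-up\|<\epsilon$ and $u^*u=1$ give $\|(pup)^*(pup)-p\|=O(\epsilon)$, so $\|b^*b-p\|=O(\epsilon)$. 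Taking the polar decomposition of $b$ inside the finite-dimensional $B$ (with unit $p$) yields a unitary $v_0\in U(B)$ with $\|b-v_0\|=O(\epsilon)$. Set $w=v_0+(1-p)\in U(A)$. Since $B$ is a finite direct sum of matrix algebras, $U(B)$ is path-connected, so $v_0$ can be joined to $p$ by a path in $U(B)$; adding $1-p$ gives a path in $U(A)$ from $w$ to $1$, and hence $w\in U(A)_0$.

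The trace-norm estimate is the last step. Writing
\[
u-w=(pup-v_0)+pu(1-p)+(1-p)up+(1-p)(u-1)(1-p),
\]
the first term is bounded in operator norm by $O(\epsilon)$. For the others, the elementary inequality
\[
\tau(x^*(1-p)x)\leq \|x\|^{2}\,\tau(1-p)
\]
together with $\|u\|=1$ and $\|u-1\|\leq 2$ gives $\|pu(1-p)\|_{\tau}$, $\|(1-p)up\|_{\tau}$ and $\|(1-p)(u-1)(1-p)\|_{\tau}$ are each $O(\sqrt{\tau(1-p)})=O(\sqrt{\epsilon})$. Hence $\|u-w\|_{\tau}=O(\sqrt{\epsilon})$, with a constant independent of $\tau\in T(A)$. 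Taking $\epsilon=1/n$ and letting $u_n$ be the resulting $w$ produces the desired sequence (in fact the convergence $\|u-u_n\|_\tau\to 0$ is uniform in $\tau\in T(A)$).

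The only genuine subtlety is arranging the uniform tracial bound $\tau(1-p)<\epsilon$ simultaneously for all $\tau\in T(A)$ in the first step; this is standard for simple unital $C^*$-algebras of tracial rank $0$, obtained by choosing the auxiliary positive contraction in Lin's definition so that $\sup_{\tau\in T(A)}\tau(a)<\epsilon$, which is possible because $A$ is simple unital with compact $T(A)$.
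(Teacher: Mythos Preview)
Your argument is correct and follows essentially the same route as the paper: invoke the tracial-rank-$0$ definition to get a projection $p$ and a finite-dimensional subalgebra containing an approximant of $pup$, perturb to a unitary in that subalgebra, adjoin $1-p$, and use connectedness of the unitary group of a finite-dimensional $C^*$-algebra to land in $U(A)_0$. The paper handles the ``only genuine subtlety'' you flag by explicitly building a decreasing sequence of projections $\{e_n\}$ with $[e_{n}]\geq 2[e_{n+1}]$ (so $\tau(e_n)\leq 2^{-n}$ for every $\tau$) and then using the form of the TR$0$ definition that gives $[1-p_n]\leq [e_n]$; this is exactly your suggested fix, made concrete.
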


\begin{proof} We can choose a decreasing sequence of non-zero projections $\{e_n\}_{n=0}^\infty$ in $A$ 
satisfying $e_0=1$, and $[e_{n}]\geq 2[e_{n+1}]$ in $K_0(A)$ for any $n$. 
Indeed, this is possible because $e_nAe_n$ has tracial topological rank 0 too and has a subalgebra isomorphic to $M_2(\C)$.  
Since $A$ has tracial topological rank 0, there exist a projection $p_n\in A$, a 
finite dimensional $C^*$-subalgebra $C_n\subset p_nAp_n$ with $p_n\in C_n$, and $v_n\in C_n$ satisfying 
$[1-p_n]\leq [e_n]$, $\|[u,p_n]\|<1/n$, and  $\|p_nup_n-v_n\|<1/n$. 
We get a desired $u_n$ by perturbing $1-p_n+v_n$.  
\end{proof}

Lemma \ref{density} shows that for any finite factor representation $(\pi,\cH)$ of $A$, the 
sequence $\{\pi(u_n)\}_{n=1}^\infty$ converges to $\pi(u)$ in the strong operator topology. 
Since $A$ is tracial AF, the factor $\pi(A)''$ is hyperfinite. 
Thus Lemma \ref{Schur-Weyl2} shows the following. 

\begin{cor} Let $\tau_i,\tau'_j\in \ex T(A)$ for $1\leq i\leq p$, $1\leq j\leq q$. 
Then 
$$(\prod_{i=1}^p\tau_i)(\prod_{j=1}^q\overline{\tau'_j})\in \ex \Char(U(A)_0).$$ 
\end{cor}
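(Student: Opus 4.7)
The plan is to reduce to Theorem \ref{Schur-Weyl2}, which already establishes that $(\prod_{i=1}^p\tau_i)(\prod_{j=1}^q\overline{\tau'_j})$ is an indecomposable character of the \emph{full} group $U(A)$. The only issue is to upgrade ``indecomposable on $U(A)$'' to ``indecomposable on $U(A)_0$'', and Lemma \ref{density} is tailored for exactly this.

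First, I would group the (not necessarily distinct) traces $\tau_1,\ldots,\tau_p,\tau'_1,\ldots,\tau'_q$ into the distinct extreme traces $\sigma_1,\ldots,\sigma_m$ appearing, with multiplicities $p_k=\#\{i:\tau_i=\sigma_k\}$ and $q_k=\#\{j:\tau'_j=\sigma_k\}$, where $(p_k,q_k)\ne(0,0)$. Let $(\pi_k,\cH_k,\Omega_k)$ be the GNS triples, $J_k$ the canonical conjugations, and
\[
\pi(u)=\bigotimes_{k=1}^m\bigl(\pi_k(u)^{\otimes p_k}\otimes (J_k\pi_k(u)J_k)^{\otimes q_k}\bigr).
\]
Since $A$ has tracial topological rank $0$, each $\pi_k(A)''$ is a hyperfinite II$_1$ factor (as noted just before the statement), so Theorem \ref{Schur-Weyl2} gives
\[
M:=\{\pi(u);\,u\in U(A)\}''=\bigotimes_{k=1}^m\bigl((R_k^{\otimes p_k})^{\fS_{p_k}}\otimes (R_k'^{\otimes q_k})^{\fS_{q_k}}\bigr),
\]
and this is a (hyperfinite) II$_1$ factor with faithful normal tracial state $\tau_M$ whose composition with $\pi$ recovers the given character $\chi=(\prod_i\tau_i)(\prod_j\overline{\tau'_j})$.

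The core step is then to show $\pi(U(A)_0)''=M$. For this, fix $u\in U(A)$ and pick, via Lemma \ref{density}, a sequence $\{u_n\}\subset U(A)_0$ with $\|u-u_n\|_{\sigma_k}\to 0$ for every $k$. In the GNS representation of $\sigma_k$ this means $\pi_k(u_n)\Omega_k\to \pi_k(u)\Omega_k$, and since the $\pi_k(u_n)$ are uniformly bounded and $\pi_k(A)\Omega_k$ is dense in $\cH_k$, a standard $\varepsilon/3$ argument yields $\pi_k(u_n)\to\pi_k(u)$ strongly. The same holds for $J_k\pi_k(u_n)J_k\to J_k\pi_k(u)J_k$ because $J_k$ is an isometric conjugate-linear involution. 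Strong convergence of a bounded sequence on each tensor factor implies strong convergence on the tensor product, so $\pi(u_n)\to\pi(u)$ strongly. Hence $\pi(u)\in\pi(U(A)_0)''$ for every $u\in U(A)$, and therefore $\pi(U(A)_0)''=M$.

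Finally, since $M$ is a factor and $\tau_M$ is a faithful normal trace, the character $\chi=\tau_M\circ\pi$ restricted to $U(A)_0$ corresponds to a factor representation with tracial vector state, and thus lies in $\ex\Char(U(A)_0)$. The only ``obstacle'' is really the verification that the density in the $\|\cdot\|_\tau$-seminorms given by Lemma \ref{density} promotes to strong convergence in each $\pi_k$ and in the tensor product representation; this is routine once one notes boundedness of the net $\{\pi(u_n)\}$ and density of $\pi_k(A)\Omega_k$ in $\cH_k$.
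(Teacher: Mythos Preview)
Your proof is correct and follows essentially the same route as the paper, which states the corollary as an immediate consequence of the two sentences preceding it: Lemma~\ref{density} gives $\pi_k(u_n)\to\pi_k(u)$ strongly in each GNS factor representation (hence $\pi(U(A)_0)''=\pi(U(A))''$), and tracial topological rank~$0$ forces each $\pi_k(A)''$ to be hyperfinite so that Theorem~\ref{Schur-Weyl2} applies. Your write-up simply unpacks these two lines in detail; the one cosmetic point is that the $\varepsilon/3$ step is cleanest if you use that $\Omega_k$ is cyclic for the \emph{commutant} (equivalently, $\pi_k(a)\Omega_k=J_k\pi_k(a^*)J_k\Omega_k$), which immediately gives $\|\pi_k(u_n-u)\pi_k(a)\Omega_k\|\le\|a\|\,\|u_n-u\|_{\sigma_k}$.
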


\begin{lemma}\label{invaraince} 
Assume that $K_0(A)$ is torsion free. 
Then for any $\omega\in \Char(U(A)_0)$, we have $\omega(vuv^*)=\omega(u)$ for any $u\in U(A)_0$ and $v\in U(A)$. 
\end{lemma}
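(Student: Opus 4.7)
The plan is to reduce the problem to the already-proved AF case by restricting $\omega$ to a carefully chosen AF subalgebra $B \subset A$ and constructing from the restriction a character of the \emph{full} unitary group $U(A)$ that agrees with $\omega$ on $U(A)_0$.

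Since $K_0(A)$ is torsion-free and hence a simple dimension group, the paragraph preceding Lemma~\ref{density} provides a unital AF subalgebra $B \subset A$ inducing an isomorphism on $K_0$ and a bijection $T(A) \cong T(B)$. I would first restrict $\omega$ to $U(B)$, apply Corollary~\ref{decomposition} to write $\omega|_{U(B)} = \int \chi\, d\nu(\chi)$ over $\ex\Char(U(B))$, and invoke Theorem~\ref{main1}(2) to see that each $\chi$ has the form $\prod_{i=1}^{p} \tau_i \prod_{j=1}^{q} \overline{\tau'_j}$ with $\tau_i, \tau'_j \in \ex T(B) = \ex T(A)$. Extending each trace uniquely to $A$ and using the same product formula, each $\chi$ extends to a character $\tilde\chi \in \Char(U(A))$ of the full unitary group (positive-definiteness via Theorem~\ref{Schur-Weyl2}, $U(A)$-conjugation invariance from the trace property). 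The integral $\tilde\omega := \int \tilde\chi\, d\nu$ is then a character of $U(A)$ with $\tilde\omega|_{U(B)} = \omega|_{U(B)}$.

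The crux is then to verify $\omega = \tilde\omega|_{U(A)_0}$: granted this, $U(A)$-conjugation invariance of $\tilde\omega$ immediately yields $\omega(vuv^*) = \tilde\omega(vuv^*) = \tilde\omega(u) = \omega(u)$. To prove the equality on $U(A)_0$, I would adapt Lemma~\ref{density} (using stable rank $1$ of $B$ to polar-decompose tracial approximants back into unitaries in $U(B)$) so that every $u \in U(A)_0$ is approximated by some sequence $u_n \in U(B)$ with $\|u - u_n\|_\tau \to 0$ for all $\tau \in T(A)$. The character $\tilde\omega$ is tracial $2$-norm continuous by Cauchy--Schwarz applied factor-by-factor and dominated convergence, so $\tilde\omega(u_n) \to \tilde\omega(u)$. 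One then propagates this to $\omega$ by using the positive-definiteness bound
\[
|\omega(u) - \omega(u_n)|^2 \leq 2\bigl(1 - \Re\,\omega(u_n^{-1} u)\bigr) = \|(1 - \pi_\omega(u_n^{-1}u))\Omega_\omega\|^2
\]
together with the $2$-norm continuity of $\omega|_{U(B)}$ at the identity, the latter following from the AF integral decomposition by dominated convergence.

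The main obstacle is this last transfer of tracial $2$-norm continuity from $U(B)$ to all of $U(A)_0$: while $\omega$ is continuous in the norm topology on $U(A)_0$ by hypothesis, the approximation produced by Lemma~\ref{density} only lives in the much weaker tracial $2$-norm topology, so one must control the GNS-induced $2$-norm on the finite von Neumann algebra $\pi_\omega(U(A)_0)''$ in terms of the tracial $2$-norms on $A$ coming through the AF restriction. Once this rigidity is in hand, the entire argument collapses to the observation that a character of $U(A)_0$ is determined by its restriction to $U(B)$, and invariance under $U(A)$-conjugation is inherited from the trace property of the extremal constituents.
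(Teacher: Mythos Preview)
Your overall framework---restrict $\omega$ to $U(B)$, decompose via Corollary~\ref{decomposition}, extend each extremal piece to $\tilde\chi$ on $U(A)$, and show $\omega$ equals the resulting integral on $U(A)_0$---is exactly the paper's strategy. The gap is precisely the one you flag: your approximation $u_n\to u$ is only in the tracial $2$-norms, and the bound $|\omega(u)-\omega(u_n)|^2\le 2(1-\Re\,\omega(u_n^{-1}u))$ requires controlling $\omega$ at $u_n^{-1}u\in U(A)_0\setminus U(B)$, so invoking ``$2$-norm continuity of $\omega|_{U(B)}$ at the identity'' is circular. You have not produced any mechanism for transferring $2$-norm continuity from $U(B)$ to $U(A)_0$, and there is no obvious one: $\omega$ is only assumed norm-continuous on $U(A)_0$.

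The paper sidesteps this entirely by working in the \emph{norm} topology. Since $A$ has tracial topological rank $0$, every $u\in U(A)_0$ is norm-approximated by unitaries $u'$ with finite spectrum (\cite[Theorem~4.2.8]{L01}). Writing $u'=\sum z_ip_i$ and using $K_0(B)=K_0(A)$, one finds projections $q_i\in B$ with $[q_i]=[p_i]$ and a unitary $w\in U(A)$ (which can be adjusted to lie in $U(A)_0$) conjugating the $p_i$ to the $q_i$. Thus there exist $w_n\in U(A)_0$ and $u_n\in U(B)$ with $\|w_nuw_n^*-u_n\|\to 0$. Now the $U(A)_0$-conjugation invariance of $\omega$ (which you already have, by definition of a character) plus norm continuity give $\omega(u)=\omega(w_nuw_n^*)\to\omega(u_n)$ directly, with no appeal to any $2$-norm continuity. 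The bounded convergence theorem then yields $\omega(u)=\int\tilde\chi(u)\,d\nu$, since each $\tilde\chi$ is norm-continuous and $U(A)$-conjugation-invariant, so $\tilde\chi(u_n)\to\tilde\chi(u)$ pointwise in $\chi$. The missing idea is: conjugate first, then approximate in norm.
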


\begin{proof} Let $\omega\in \Char(U(A)_0)$. 
We fix a unital embedding of a simple AF algebra $B$ into 
$A$ that induces an isomorphism on the $K_0$-groups. 
By \cite[Theorem 4.2.8]{L01}, for any $\varepsilon >0$ and any $u\in U(A)_0$, there exists 
$u'\in U(A)_0$ with finite spectrum satisfying $\|u-u'\|<\varepsilon$. 
Let $u'=\sum_{i=1}^nz_ip_i$ be the spectral decomposition of $u'$ with $z_i\in \T$ and $p_i\in \Proj(A)$. 
Since $K_0(A)=K_0(B)$, there exists a partition of unity $1=\sum_{i=1}^nq_i$ in $B$ consisting of projections satisfying $[p_i]=[q_i]$, 
and there exists $w\in U(A)$ satisfying $wp_iw^*=q_i$ for all $i$. 
Choosing a unitary $v\in U(q_1Aq_1)$ with $[v+1-q_1]=-[w]$ and replacing $w$ with $(v+1-q_1)w$ if necessary, we may further assume 
$w\in U(A)_0$.  
Therefore there exist $w_n\in U(A)_0$ and $u_n\in U(B)$ such that $\{\|w_nuw_n^*-u_n\|\}_{n=1}^\infty$ 
converges to 0, and so $\omega(u)=\lim_{n\to\infty}\omega(u_n)$.

Since the restriction map from $T(A)$ to $T(B)$ is an affine isomorphism, any $\tau\in T(B)$ has 
a unique extension $\tilde{\tau}\in T(A)$. 
Let 
$$\omega|_{U(B)}=\int_{\ex \Char(U(B))}\chi d\nu(\chi)=\sum_{p,q\geq 0}\int_{\ex^{p,q}\Char(U(B))}
\chi d\nu(\chi)$$ be the integral decomposition of $\omega|_{U(B)}$. 
For any $\chi\in \ex^{p,q}(\Char(U(B)))$, there exist 
$\tau_i,\tau'_j\in \ex T(B)$ for $1\leq i\leq p$, $1\leq j\leq q$ satisfying 
$\chi=(\prod_{i=1}^p\tau_i)(\prod_{j=1}^q\overline{\tau'_j})$. 
We set $\tilde{\chi}=(\prod_{i=1}^p\tilde{\tau_i})(\prod_{j=1}^q\overline{\tilde{\tau'_j}})$, 
whose restriction to $U(A)_0$ is in $\ex \Char(U(A)_0)$. 
By bounded convergence theorem, we have 
$$\omega(u)=\lim_{n\to\infty}\omega(u_n)=\lim_{n\to\infty}\int_{\ex\Char(U(B))}\tilde{\chi}(u_n)d\nu(\chi)
=\int_{\ex\Char(U(B))}\tilde{\chi}(u)d\nu(\chi).$$
Since $\tilde{\chi}(vuv^*)=\tilde{\chi}(u)$ holds for any $u\in U(A)_0$ and $v\in U(A)$, 
we get $\omega(vuv^*)=\omega(u)$. 
\end{proof}

In the above proof, the sequence $\{u_n\}_{n=1}^\infty$ depends only on $u$, and it does not 
depend on $\omega$. 
This means that $\omega$ is completely determined by $\omega|_{U(B)}$, and the integral expression 
$$\omega(u)=\int_{\ex\Char(U(B))}\tilde{\chi}(u)d\nu(\chi)$$
is unique. 
Therefore we get 

\begin{cor}\label{component} 
Assume that $K_0(A)$ is torsion free. 
Then 
$$\ex \Char(U(A)_0)=\{(\prod_{i=1}^p\tau_i)(\prod_{j=1}^q\overline{\tau'_j});\; 
\tau_i,\tau'_j\in \ex T(A),\; p,q\geq 0\}.$$
\end{cor}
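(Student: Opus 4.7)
The plan is to derive the two inclusions. The $\supseteq$ direction is already given by the Corollary following Lemma \ref{density} (an application of Theorem \ref{Schur-Weyl2}), so I focus on $\subseteq$. The key idea is to take the integral decomposition of $\omega|_{U(B)}$ for a suitable AF subalgebra $B$, lift it to a decomposition of $\omega$ on $U(A)_0$, and use indecomposability to collapse it to a point mass.

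Fix $\omega\in\ex\Char(U(A)_0)$ and a unital embedding $B\hookrightarrow A$ of a simple unital AF algebra inducing an isomorphism on $K_0$-groups. Then the restriction map $T(A)\to T(B)$ is an affine bijection; denote its inverse by $\tau\mapsto\widetilde{\tau}$. The remark preceding the statement gives the unique expression
$$\omega(u)=\int_{\ex\Char(U(B))}\widetilde{\chi}(u)\,d\nu(\chi),\qquad u\in U(A)_0,$$
where $\nu$ is the unique probability measure representing $\omega|_{U(B)}$ (Corollary \ref{decomposition} applied to $B$) and $\widetilde{\chi}=(\prod\widetilde{\tau_i})(\prod\overline{\widetilde{\tau'_j}})$ is the canonical lift of $\chi=(\prod\tau_i)(\prod\overline{\tau'_j})$, the latter having this form thanks to Theorem \ref{main1}(2). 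Each $\widetilde{\chi}|_{U(A)_0}$ is indecomposable by the Corollary following Lemma \ref{density}.

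The crucial step is to conclude that $\nu$ is a point mass. I would argue via the GNS representation: $\pi_\omega$ decomposes as a direct integral $\int^\oplus\pi_{\widetilde{\chi}}|_{U(A)_0}\,d\nu(\chi)$ of factor representations, and since the map $\chi\mapsto\widetilde{\chi}|_{U(A)_0}$ is injective (restricting to $U(B)$ recovers $\chi$), the summands are pairwise inequivalent and $L^\infty(\ex\Char(U(B)),\nu)$ embeds into the center of $\pi_\omega(U(A)_0)''$. Indecomposability of $\omega$ forces this center to be trivial, so $\nu=\delta_{\chi_0}$, giving $\omega=\widetilde{\chi_0}|_{U(A)_0}=(\prod\widetilde{\tau_i})(\prod\overline{\widetilde{\tau'_j}})|_{U(A)_0}$ in the desired form.

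The main obstacle is the direct-integral/factor argument, as $U(A)_0$ is not locally compact and standard disintegration theorems do not apply verbatim. A safer alternative is a direct splitting argument: if $\nu$ were not a point mass, one could split $\ex\Char(U(B))=E_1\sqcup E_2$ with $\nu(E_i)>0$ and define $\omega_i(u)=\nu(E_i)^{-1}\int_{E_i}\widetilde{\chi}(u)\,d\nu(\chi)\in\Char(U(A)_0)$, so that $\omega=\nu(E_1)\omega_1+\nu(E_2)\omega_2$; the uniqueness part of the representation formula for $\omega_i|_{U(B)}$ forces $\omega_1\neq\omega_2$, contradicting extremality of $\omega$.
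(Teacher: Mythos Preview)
Your splitting argument is correct and is precisely how the paper's terse remark preceding the Corollary should be read: uniqueness of the integral expression on $U(A)_0$ (inherited from Corollary~\ref{decomposition} on $U(B)$, via the fact that $\omega$ is determined by $\omega|_{U(B)}$) together with extremality forces $\nu$ to be a Dirac mass. You are right to set aside the direct-integral approach; it is unnecessary here and the elementary splitting suffices.
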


\begin{proof}[Proof of Theorem \ref{main2}] 
Let $\chi\in \ex \Char(U(A))$, and let $(\pi,\cH,\Omega)$ be the cyclic representation 
of $U(A)$ associated with $\chi$. 
Let $M=\pi(U(A))''$, and let $\tr(x)=\inpr{x\Omega}{\Omega}$ for $x\in M$. 
Then $M$ is a finite factor and $\tr$ is a trace on $M$. 
We first claim that $N=\pi(U(A)_0)''$ is a factor too. 

Let $z\in Z(N)$ be a non-zero central projection. 
For $u\in U(A)_0$, we set $\omega(u)=\tr(z\pi(u))/\tr(z)$. 
Then $\omega\in \Char(U(A)_0)$, and we have $\omega(vuv^{-1})=\omega(u)$ 
for any $u\in U(A)_0$ and $v\in U(A)$ thanks to Lemma \ref{invaraince}. 
This implies 
$$\tr(z\pi(u))=\tr(z\pi(vuv^{-1}))=\tr(\pi(v^{-1})z\pi(v)\pi(u)),$$
and $\tr((z-\pi(v^{-1})z\pi(v))\pi(u))=0$. 
Note that $z-\pi(v^{-1})z\pi(v)\in N$ as $v$ normalizes $U(A)_0$, and hence $N$. 
Since $\mathrm{span}\pi(U(A)_0)$ is dense in $N$ in the weak operator topology 
and $\tr$ is faithful on $N$, we get $z=\pi(v^{-1})z\pi(v)$ for any $v\in U(A)$. 
This implies that $z$ is in the center of $M$, and so $z=1$. 
Thus $N$ is a factor. 

Since $\pi|_{U(A)_0}$ is a finite factor representation, we have $\chi|_{U(A)_0}\in \ex \Char(U(A)_0)$, and 
$\pi|_{U(A)_0}$ is quasi-equivalent to the cyclic representation associated with 
$\chi|_{U(A)_0}\in \ex \Char(U(A)_0)$. 
Assume $N=\C$. 
Then $\chi|_{U(A)_0}$ is multiplicative, and Corollary \ref{component} implies that $\chi|_{U(A)_0}$ is trivial. 
Thus $\chi$ comes from 
$$\ex \Char(U(A)/U(A)_0)=\ex \Char(K_1(A))=\widehat{K_1(A)}.$$ 

Assume $N\neq \C$. 
Then Corollary \ref{component} implies that there exist distinct extreme traces 
$\tau_1,\tau_2,\cdots, \tau_m\in \ex T(A)$ and non-negative integers $p_i,q_i$ with $\sum_{i=1}^m(p_i+q_i)\neq 0$ 
satisfying $\chi|_{U(A)_0}=\prod_{i=1}^m\tau_i^{p_i}\overline{\tau_i}^{q_i}|_{U(A)_0}$. 
Let $(\pi_i,\cH_i,\Omega_i)$ be the GNS triple for $\tau_i$, and let $J_i$ be the canonical conjugation 
given by $J_i\pi_i(x)\Omega_i=\pi_i(x)^*\Omega_i$. 
For $u\in U(A)$, we set 
$$\sigma(u)=\bigotimes_{i=1}^m \pi_i(u)^{\otimes p_i}\otimes (J_i\pi_i(u)J_i)^{\otimes q_i}.$$
Then $\pi|_{U(A)_0}$ and  $\sigma|_{U(A)_0}$ are quasi-equivalent. 
Lemma \ref{density} implies that $\sigma(U(A)_0)''=\sigma(U(A))''$, which we denote by $P$. 
Then there exists an isomorphism $\theta$ from $P$ onto $N$ satisfying $\theta\circ \sigma(u)=\pi(u)$ 
for any $u\in U(A)_0$. 

For $v\in U(A)$, we set $\gamma(v)=\theta\circ \sigma(v^{-1})\pi(v)$. 
We claim that $\gamma$ is a homomorphism from $U(A)$ to $U(N'\cap M)$. 
Indeed, since $U(A)_0$ is a normal subgroup of $U(A)$,  we have the following 
for any $u\in U(A)_0$ and $v\in U(A)$: 
\begin{align*}
\lefteqn{\gamma(v)\pi(u)\gamma(v)^{-1}=\theta\circ\sigma(v^{-1})\pi(vuv^{-1})\theta\circ\sigma(v)}\\
 &=\theta\circ\sigma(v^{-1})\theta\circ \sigma(vuv^{-1})\theta\circ\sigma(v)
 =\theta\circ \sigma(u)=\pi(u),
\end{align*}
which shows that $\gamma(v)\in N'\cap M$. 
Let $v,w\in U(A)$. 
Then we have $\gamma(v)\theta\circ\sigma(w^{-1})=\theta\circ \sigma(w^{-1})\gamma(v)$ 
as $\gamma(v)\in N'\cap M$ and $\theta\circ \sigma(w)\in N$. 
Thus 
$$\gamma(v)\gamma(w)=\gamma(v)\theta\circ\sigma(w^{-1})\pi(w)=\theta\circ\sigma(w^{-1})\gamma(v)\pi(w)
=\gamma(vw),$$
which shows that $\gamma$ is a homomorphism. 

Since $\gamma(u)=1$ for any $u\in U(A)_0$, the homomorphism $\gamma:U(A)\rightarrow N'\cap M$ 
factors through $U(A)/U(A)_0=K_1(A)$, and $\gamma(U(A))''$ is commutative. 
Since $M$ is a factor and it is generated by $N$ and $\gamma(U(A))$, we obtain $\gamma(U(A))\subset \T$. 
Thus there exists $\psi\in \widehat{K_1(A)}$ satisfying $\gamma(v)=\psi([v])$ for any $v\in U(A)$. 
Now we have 
$$\chi(v)=\tr(\pi(v))=\tr(\theta\circ \sigma(v)\gamma(v))=\psi([v])\tr(\theta\circ \sigma(v)).$$
Since $\tr\circ \theta$ is a unique trace on $P$, we conclude
$$\chi(v)=\psi([v])\prod_{i=1}^m\tau_i^{p_i}(v)\overline{\tau_i(v)^{q_i}}.$$ 
\end{proof}

\begin{example} Let $0<\theta<1$ be an irrational number. 
The irrational rotation algebra $A_\theta$ is the universal $C^*$-algebra generated by two unitaries $u,v$ satisfying 
the relation $uv=e^{2\pi\sqrt{-1}\theta}vu$ (see \cite[Chapter VI]{D96}). 
It is a separable nuclear simple $C^*$-algebra with a unique tracial state and with $K$-theoretical data   
$$(K_0(A),K_0(A)_+,[1],K_1(A))\cong (\Z+\theta\Z,(\Z+\theta\Z)\cap [0,\infty),1,\Z^2).$$
Elliott-Evans \cite{EE} showed that $A_\theta$ is an AT algebra of real rank 0, and  
hence it has tracial topological rank 0 (see \cite[Theorem 4.3.5]{L01}). 
Thus $\ex\Char(U(A_\theta)_0)\cong \Z_{\geq 0}^2$ and $\ex\Char(U(A_\theta))\cong \T^2\times \Z_{\geq 0}^2$. 
One might wonder if $U(A_\theta)_0$ is isomorphic to the unitary group $U(B)$ of an AF algebra $B$, but this is never the case.  
Indeed, on one hand the homotopy group $\pi_2(U(B))$ is trivial for any AF algebra $B$ 
because $\pi_2(U(d))$ is trivial for any $d\geq 1$. 
On the other hand we have $\pi_2(U(A_\theta)_0)\cong K_1(A_\theta)\cong \Z^2$ (see \cite[Theorem II]{Z91}). 
\end{example}

\section{Stable simple AF algebras}
The purpose of this section is to prove Theorem \ref{main3}. 

\begin{lemma}
$$\lim_{t\to +\infty}e^{-t}\sum_{n=1}^\infty |\frac{t^{n-1}}{(n-1)!}-\frac{t^n}{n!}|=0.$$
\end{lemma}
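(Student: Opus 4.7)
The plan is to recognize the sum as the total variation of successive Poisson probabilities and then use unimodality together with Stirling's formula. Set
$$p_n(t)=e^{-t}\frac{t^n}{n!},\qquad n\geq 0,$$
so that $\{p_n(t)\}_{n=0}^\infty$ is the Poisson distribution with parameter $t$ and the quantity to be estimated is
$$S(t):=\sum_{n=1}^\infty |p_{n-1}(t)-p_n(t)|.$$

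First I would observe that $p_n(t)/p_{n-1}(t)=t/n$, which is $\geq 1$ for $n\leq t$ and $\leq 1$ for $n\geq t$. Hence for each fixed $t>0$ the sequence $n\mapsto p_n(t)$ is unimodal, with maximum attained at some index $n_0=n_0(t)$ with $n_0\in\{\lfloor t\rfloor,\lceil t\rceil\}$. For such a unimodal nonnegative sequence, whose boundary values are $p_0(t)=e^{-t}$ and $\lim_{n\to\infty}p_n(t)=0$, a standard telescoping computation gives
$$S(t)=\sum_{n=1}^{n_0}(p_n(t)-p_{n-1}(t))+\sum_{n=n_0+1}^{\infty}(p_{n-1}(t)-p_n(t))=2p_{n_0}(t)-e^{-t}.$$

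It remains to show $p_{n_0(t)}(t)\to 0$ as $t\to\infty$. Applying Stirling's formula $n!\sim\sqrt{2\pi n}\,(n/e)^n$ to $n=n_0(t)$ (which satisfies $n_0(t)\sim t$), one gets
$$p_{n_0(t)}(t)=e^{-t}\frac{t^{n_0(t)}}{n_0(t)!}=O\!\left(\frac{1}{\sqrt{t}}\right),\qquad t\to\infty.$$
Combining this with the identity above yields $S(t)\to 0$, as desired.

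There is no real obstacle here; the only thing to be slightly careful about is the unimodality step, which uses that the cumulative sums of the signed differences are nonnegative on $\{1,\dots,n_0\}$ and nonpositive on $\{n_0+1,\dots\}$, so that the absolute values can be combined into the closed-form expression $2p_{n_0}(t)-e^{-t}$. The Stirling estimate is entirely routine.
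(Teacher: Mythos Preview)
Your proof is correct and follows essentially the same route as the paper's. The paper also splits the sum at $[t]=\lfloor t\rfloor$ using the sign change of $\frac{t^{n-1}}{(n-1)!}-\frac{t^n}{n!}$, telescopes to the closed form $-1+\dfrac{2t^{[t]}}{[t]!}$ (which, after multiplying by $e^{-t}$, is exactly your $2p_{n_0}(t)-e^{-t}$ with $n_0=[t]$), and then applies Stirling to obtain $e^{-t}t^{[t]}/[t]!\sim 1/\sqrt{2\pi t}$. Your framing in terms of the unimodality of the Poisson weights is a nice way to see why the split works, but the computation is the same.
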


\begin{proof} Let $[t]$ be the integer part of $t>0$. Then 
\begin{align*}
\sum_{n=1}^\infty |\frac{t^{n-1}}{(n-1)!}-\frac{t^n}{n!}|
&=-\sum_{n=1}^{[t]}(\frac{t^{n-1}}{(n-1)!}-\frac{t^n}{n!})
+\sum_{n=[t]+1}^\infty (\frac{t^{n-1}}{(n-1)!}-\frac{t^n}{n!})\\
&=-1+\frac{2t^{[t]}}{[t]!}. 
\end{align*}
Letting $a_t=t-[t]$, we get the following estimate from the Stirling formula:
\begin{align*}
\frac{e^{-t}t^{[t]}}{[t]!}
&=\frac{e^{-[t]-a_t}([t]+a_t)^{[t]}}{[t]!} 
\sim \frac{e^{-[t]-a_t}([t]+a_t)^{[t]}}{[t]^{[t]}e^{-[t]}\sqrt{2\pi[t]}}
=\frac{e^{-a_t}(1+\frac{a_t}{[t]})^{[t]}}{\sqrt{2\pi[t]}}
\\
&\sim\frac{1}{\sqrt{2\pi t}}.
\end{align*}
\end{proof}

\begin{lemma} \label{tail boundary} Let $a=(a_1,a_2,\ldots,a_m)\in \R_{>0}^m$. 
For $x,y\in \Z_{\geq 0}^m$, we introduce a transition probability from $x$ to $y$ by 
$$p_a(x,y)=\left\{
\begin{array}{ll}
\prod_{i=1}^m \frac{e^{-a_i}a_i^{y_i-x_i}}{(y_i-x_i)!} , &\quad y_i-x_i\geq 0,\; \forall i \\
0 , &\quad \mathrm{otherwise}
\end{array}
\right.,
$$
and a Markov operator $P$ acting on $\ell^\infty(\Z_{\geq 0}^m)$ by 
$$P(f)(x)=\sum_{y\in \Z_{\geq 0}^m}p_a(x,y)f(y).$$
Then the tail boundary for $P$ is trivial, that is, 
if $\{f_n\}_{n=1}^\infty$ is a bounded sequence in $\ell^\infty(\Z_{\geq 0}^m)$ satisfying 
$P(f_{n+1})=f_n$ for any $n\in \N$, then there exists a constant $c$ satisfying $f_n=c$ for any $n\in \N$. 
\end{lemma}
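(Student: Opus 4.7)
The plan is to deduce tail-boundary triviality from a total-variation ergodicity statement for the chain: I would first establish that
$$\|P^k(x,\cdot)-P^k(x',\cdot)\|_{\mathrm{TV}} \longrightarrow 0 \quad (k\to\infty)$$
for every pair $x,x'\in\Z_{\geq 0}^m$, and then bootstrap this into the claim about $\{f_n\}$. The bootstrap is the easy half: iterating $Pf_{n+1}=f_n$ yields $f_n=P^kf_{n+k}$ for every $k\geq 0$, and hence
$$|f_n(x)-f_n(x')| \leq 2\Bigl(\sup_j\|f_j\|_\infty\Bigr)\|P^k(x,\cdot)-P^k(x',\cdot)\|_{\mathrm{TV}},$$
which forces each $f_n$ to be constant as $k\to\infty$. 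The relation $Pf_{n+1}=f_n$ then equates these constants across $n$, since $P$ preserves constant functions.

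For the total-variation estimate itself, the key observation is that starting at $x$ the time-$k$ distribution is that of $x+S_k$, where $S_k^{(1)},\ldots,S_k^{(m)}$ are independent with $S_k^{(i)}\sim\mathrm{Poisson}(ka_i)$. Writing $\mu_k$ for the law of $S_k$, the target quantity equals $\|\mu_k-\mu_k*\delta_{x'-x}\|_{\mathrm{TV}}$, which I would bound by telescoping first across coordinates (using subadditivity of TV with respect to product measures) and then within each coordinate by unit shifts, obtaining
$$\|\mu_k-\mu_k*\delta_{x'-x}\|_{\mathrm{TV}}\leq \sum_{i=1}^m |x_i'-x_i|\,\|\nu_{ka_i}-\nu_{ka_i}*\delta_1\|_{\mathrm{TV}},$$
where $\nu_\lambda=\mathrm{Poisson}(\lambda)$.

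The preceding lemma is engineered precisely to kill this last quantity: a direct computation gives
$$\|\nu_\lambda-\nu_\lambda*\delta_1\|_{\mathrm{TV}}=\frac{1}{2}e^{-\lambda}\sum_{n\geq 1}\Bigl|\frac{\lambda^{n-1}}{(n-1)!}-\frac{\lambda^n}{n!}\Bigr|,$$
which tends to $0$ as $\lambda\to\infty$. Since $ka_i\to\infty$ for each $i$ while $|x_i'-x_i|$ stays fixed, the telescoped bound vanishes, closing the argument. I do not expect a genuine obstacle: the delicate analytic input has been isolated in the preceding lemma, and everything else is a clean product-measure reduction, modulo the minor symmetric treatment of coordinates where $x_i'-x_i$ is negative (which is handled by swapping the roles of $x$ and $x'$).
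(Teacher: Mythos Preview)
Your proposal is correct and follows essentially the same route as the paper: both use $f_n=P^kf_{n+k}$, identify the $k$-step kernel as a Poisson$(ka)$ shift, and reduce the comparison of $f_n$ at two points to the $\ell^1$-distance between a Poisson law and its unit translate, which is exactly what the preceding lemma controls. The paper simply works directly with the unit increments $x\mapsto x+e_i$ (so no telescoping is needed) and does not name the total-variation distance explicitly, but the argument is the same.
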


\begin{proof} Since $\{\frac{e^{-a}a^x}{x!}\}_{x=0}^\infty$ is the Poisson distribution, the $k$-step 
transition probability $p^{(k)}_a(x,y)$ is given by $p_{ka}(x,y)$. 
We denote by $\{e_i\}_{i=1}^m$ the canonical basis of $\R^m$. 
Let $C=\sup_{n\in \N}\|f_n\|_\infty$. 
Then
\begin{align*}
\lefteqn{|f_n(x)-f_n(x+e_i)|=|P^k(f_{n+k})(x)-P^k(f_{n+k})(x+e_i)|} \\
&\leq \sum_{y\in \Z_{\geq 0}^m}|p_{ka}(x,y)-p_{ka}(x+e_i,y)||f_{n+k}(y)|\\
&\leq C\sum_{z\in \Z_{\geq 0}^m}|p_{ka}(x,x+z)-p_{ka}(x+e_i,x+z)|\\
&=C\sum_{z\in \Z_{\geq 0}^m}|p_{ka}(0,z)-p_{ka}(e_i,z)|\\
&=Ce^{-ka_i}(1+\sum_{l=1}^\infty|\frac{(ka_i)^l}{l!}-\frac{(ka_i)^{l-1}}{(l-1)!}|).
\end{align*}
Letting $k\to\infty$, we get $f_n(x)=f_n(x+e_i)$, and $f_n$ is a constant function. 
\end{proof}

Let $A$ be a stable simple AF algebra not isomorphic to $\K$. 
Then we may assume $A=B\otimes \K$ with an infinite dimensional unital simple AF algebra $B$. 
We can express $B$ and $\K$ as $B=\varinjlim B_n$ with finite dimensional $B_n$ 
and $\K=\varinjlim M_n(\C)$. 
We may assume that $B_n\subset B$ is a unital inclusion. 
Then we have 
$$U_\to(A)=\varinjlim U_\to(B\otimes M_n(\C)).$$
We set $G_n=U_\to(B\otimes M_n(\C))$. 
For $n<m$, we denote by $\Phi_{n,m}$ the connecting map from $G_n$ to $G_m$ given by 
$\Phi_{n,m}(u)=u+(1_m-1_n)$ where $1_n$ is the unit of $B\otimes M_n(\C)$. 
We denote by $\Phi_{n,\infty}$ the inclusion map from $G_n$ to $U(A)$ given by 
$\Phi_{n,\infty}(u)=u+1-1_n$. 

Assume that $TW(A)$ is finite dimensional. 
Then $\ex T(B)$ is a finite set, and we denote $\ex T(B)=\{\tau_i\}_{i=1}^s$. 
Let $\Tr$ and $\Tr_n$ be the traces of $\K$ and $M_n(\C)$ respectively. 
We set 
$$\tau_{i,n}=\frac{1}{n}\tau_i\otimes \Tr_n.$$
Then $\ex T(B\otimes M_n(\C))=\{\tau_{i,n}\}_{i=1}^s$.

\begin{lemma} Let $A$ be a stable simple AF algebra not isomorphic to $\K$. 
If the affine space $TW(A)$ is finite dimensional, then 
$\chi_{\tau,\tau'}\in \ex \Char(U_\to(A))$ for any $\tau,\tau'\in TW(A)$. 
\end{lemma}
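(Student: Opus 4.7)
The plan is a Vershik--Kerov style ergodic argument in which the branching Markov chain attached to $\chi_{\tau,\tau'}$ is identified with the Poisson chain of Lemma \ref{tail boundary}. Finite-dimensionality of $TW(A)$ lets us write $\tau=\sum_{i=1}^s a_i\tau_i\otimes\Tr$ and $\tau'=\sum_{i=1}^s b_i\tau_i\otimes\Tr$; after discarding zero components we may assume $a_i,b_i>0$. Suppose $\chi_{\tau,\tau'}=\int\chi\,d\mu(\chi)$ is an extremal decomposition over $\ex\Char(U_\to(A))$; the aim is to show $\mu$ is a point mass.

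The first step is to decompose $\chi_{\tau,\tau'}|_{G_n}$ explicitly. For $u\in G_n$ embedded in $U(A)$ via $u\mapsto u+1-1_n$, one has $\tau(u-1_n)=n\sum_i a_i(\tau_{i,n}(u)-1)$ and similarly for $\tau'$, so expanding the exponential yields
\[
\chi_{\tau,\tau'}|_{G_n}=\sum_{p,q\in\Z_{\ge 0}^s}c_n(p,q)\,\chi_{n,p,q},\qquad c_n(p,q)=\prod_{i=1}^s e^{-n(a_i+b_i)}\,\frac{(na_i)^{p_i}(nb_i)^{q_i}}{p_i!\,q_i!},
\]
where $\chi_{n,p,q}=\prod_{i=1}^s\tau_{i,n}^{p_i}\,\overline{\tau_{i,n}}^{q_i}$. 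Theorem \ref{main1}, applied to the unital simple AF algebra $B\otimes M_n$, tells us each $\chi_{n,p,q}$ is indecomposable, and the unique Choquet decomposition on $G_n$ (an inductive limit of compact unitary groups) rules out any $\det_\varphi$ factor or other extreme trace entering. Integrating against $\mu$ shows the same support property for $\chi|_{G_n}$ for $\mu$-a.e.\ $\chi$.

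Next I would compute the branching rule. A direct calculation gives $\tau_{i,n+1}(u+1_{n+1}-1_n)=(n\tau_{i,n}(u)+1)/(n+1)$ for $u\in G_n$, and the binomial theorem yields
\[
\chi_{n+1,p',q'}|_{G_n}=\sum_{k\le p',\;l\le q'}\prod_{i=1}^s\frac{n^{k_i+l_i}}{(n+1)^{p'_i+q'_i}}\binom{p'_i}{k_i}\binom{q'_i}{l_i}\,\chi_{n,k,l}.
\]
Let $X_n=(p^{(n)},q^{(n)})\in\Z_{\ge 0}^{2s}$ denote the random index at level $n$ under $\mu$. A short Bayes computation combining the above with the Poisson marginals $c_n$ yields the forward transitions
\[
P\bigl(X_{n+1}=(p',q')\,\bigm|\,X_n=(p,q)\bigr)=\prod_{i=1}^s e^{-(a_i+b_i)}\,\frac{a_i^{p'_i-p_i}}{(p'_i-p_i)!}\,\frac{b_i^{q'_i-q_i}}{(q'_i-q_i)!},
\]
which is exactly the Poisson kernel $p_a$ of Lemma \ref{tail boundary} with parameter $a=(a_1,\dots,a_s,b_1,\dots,b_s)\in\R_{>0}^{2s}$.

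For any bounded $\mu$-measurable $F$ on $\ex\Char(U_\to(A))$, the conditional expectations $f_n(x)=E_\mu[F\mid X_n=x]$ are uniformly bounded and satisfy $Pf_{n+1}=f_n$; Lemma \ref{tail boundary} then forces each $f_n$ to be constant, so $F$ is $\mu$-a.s.\ constant and $\mu$ is a point mass. I expect the main obstacle to be this last step: while Theorem \ref{ergodic method} provides a Vershik--Kerov sequence for each extreme character, linking triviality of the tail $\sigma$-algebra of $(X_n)$ to extremality of $\chi_{\tau,\tau'}$ requires verifying that two paths with the same tail under $\mu$ converge to the same limit character---a path-space argument that must be set up by hand here, since each $G_n$ is itself only an inductive limit of compact unitary groups rather than a single compact group.
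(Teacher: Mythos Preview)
Your core computation---the Poisson expansion of $\chi_{\tau,\tau'}|_{G_n}$, the branching rule, and the identification with the Poisson kernel of Lemma~\ref{tail boundary}---matches the paper exactly. The difference is in how extremality is tested, and the paper's choice makes your acknowledged gap disappear.

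Rather than writing $\chi_{\tau,\tau'}=\int\chi\,d\mu$ over $\ex\Char(U_\to(A))$ and arguing that $\mu$ is a point mass via a path-space construction, the paper uses the domination criterion directly: take any $\chi\in\Char(U_\to(A))$ for which $c\chi_{\tau,\tau'}-\chi$ is positive definite for some $c>0$, and show $\chi$ is a scalar multiple of $\chi_{\tau,\tau'}$. Since $(c\chi_{\tau,\tau'}-\chi)|_{G_n}$ is again positive definite and the decomposition into indecomposables of $G_n$ is unique, $\chi|_{G_n}$ is supported on the same family $\{\chi_{n,p,q}\}$, with coefficients $f_n(p,q)\,c_n(p,q)$ for some $0\le f_n\le c$. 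Your branching calculation (which the paper carries out for general $n<m$, obtaining the Poisson kernel with parameter $(m-n)a$) then reads exactly as $f_n=P^{m-n}f_m$, so $\{f_n\}$ is a bounded sequence with $Pf_{n+1}=f_n$, and Lemma~\ref{tail boundary} forces each $f_n$ to be a constant. No probability space, no random variable $X_n$, no conditional expectations, no issue of measurability of a Vershik--Kerov path as a function of $\chi$.

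Your route can be completed by invoking the standard equivalence between characters and central measures on the path space of the branching graph (extreme characters correspond to ergodic central measures, the ergodic component is tail-measurable, and $E_\mu[F\mid X_1,\dots,X_n]=f_n(X_n)$ by the central/Markov property), but that is precisely the machinery the paper's harmonic-function formulation bypasses. One minor point: ``discarding zero components'' should be done separately for $\tau$ and $\tau'$; the paper keeps distinct index sets $I=\{i:a_i>0\}$ and $J=\{j:b_j>0\}$, since a given extreme trace $\tau_i$ may appear in $\tau$ but not in $\tau'$.
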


\begin{proof} We use the notation as above. 
Then there exist non-negative numbers $a_i$ and $b_i$ such that 
$\tau=\sum_{i=1}^s a_i\tau_i\otimes \Tr$ and 
$\tau'=\sum_{i=1}^sb_i\tau_i\otimes \Tr$. 
We set $I=\{i;\; a_i\neq 0\}$, $J=\{j;\; b_j\neq 0\}$.  

For any $u\in G_n$, we have 
\begin{align*}
\lefteqn{\chi_{\tau,\tau'}(\Phi_{n,\infty}(u))=\prod_{i\in I}e^{na_i\tau_{i,n}(u-1_n)}
\prod_{j\in J}e^{nb_j\tau_j(u^*-1_n)}} \\
 &=\prod_{i\in I}\big(\sum_{k=0}^\infty\frac{(na_i)^ke^{-na_i}}{k!}\tau_{i,n}(u)^k\big)
 \prod_{j\in J}\big(\sum_{l=0}^\infty\frac{(nb_j)^le^{-nb_j}}{l!}\overline{\tau_{j,n}(u)}^l\big)\\
&=\sum_{(x,y)\in \Z_{\geq 0}^I\times \Z_{\geq 0}^J}a_n(x)b_m(y)
\prod_{i\in I}\tau_{i,n}(u)^{x_i}\prod_{j\in J}\overline{\tau_{j,n}(u)}^{y_i}. 
\end{align*}
where 
$$a_n(x)=\prod_{i\in I}\frac{(na_i)^{x_i}e^{-na_i}}{{x_i}!},\quad 
b_n(y)=\prod_{j\in J}\frac{(nb_j)^{y_j}e^{-nb_j}}{{y_j}!}.$$

Let $\chi\in \Char(U_\to(A))$ be a character for which $c\chi_{\tau,\tau'}-\chi$ is positive definite 
with some constant $c>0$. 
Then there exist unique positive numbers $f_n(x,y)$ for $(x,y)\in \Z_{\geq 0}^{s_1+s_2}$ such that 
$0\leq f_n(x,y)\leq c$ and 
$$\chi(\Phi_{n,\infty}(u))=\sum_{(x,y)\in \Z_{\geq 0}^I\times \Z_{\geq 0}^J}f_n(x,y)a_n(x)b_n(y)
\prod_{i\in I}\tau_{i,n}(u)^{x_i}\prod_{j\in J}\overline{\tau_{j,n}(u)}^{y_j}.$$

For $n<m$, we have $\chi(\Phi_{n,\infty}(u))=\chi(\Phi_{m,\infty}\circ\Phi_{n,m}(u))$, and 
the right-hand side is  
$$\sum_{(z,w)\in \Z_{\geq 0}^I\times \Z_{\geq 0}^J}f_m(z,w)a_m(z)b_m(w)\prod_{i\in I}\tau_{i,m}(\Phi_{n,m}(u))^{z_i}
\prod_{j\in J}\overline{\tau_{j,m}(\Phi_{n,m}(u))}^{w_j}.$$
Since 
\begin{align*}
\lefteqn{\prod_{i\in I}\tau_{i,m}(\Phi_{n,m}(u))^{z_i}=\prod_{i\in I}(\frac{n\tau_{i,n}(u)+m-n}{m})^{z_i}} \\
 &=\prod_{i\in I}(\sum_{x_i=0}^{z_i}
\left(
\begin{array}{c}
z_i\\
x_i 
\end{array}
\right)
\frac{n^{x_i}(m-n)^{z_i-x_i}}{m^{z_i}}\tau_{i,n}(u)^{x_i})\\
&=\sum_{x\in \Z_{\geq 0}^I,\;x\leq z}\prod_{i\in I}
\left(
\begin{array}{c}
z_i\\
x_i 
\end{array}
\right)
\frac{n^{x_i}(m-n)^{z_i-x_i}}{m^{z_i}}\tau_{i,n}(u)^{x_i},
\end{align*}
where $x\leq z$ means $x_i\leq z_i$ for all $i\in I$, 
we obtain 
\begin{align*}
\lefteqn{\chi(\Phi_{m,\infty}\circ\Phi_{n,m}(u))
=\sum_{0\leq x\leq z,\;0\leq y\leq w}f_m(z,w)}\\
&\times \frac{a_m(z)}{a_n(x)}
\prod_{i\in I}
\left(
\begin{array}{c}
z_i\\
x_i 
\end{array}
\right)
\frac{n^{x_i}(m-n)^{z_i-x_i}}{m^{z_i}}  \frac{b_m(w)}{b_n(y)} 
 \prod_{j\in J}
\left(
\begin{array}{c}
w_j\\
y_j 
\end{array}
\right)
\frac{n^{y_j}(m-n)^{w_j-y_j}}{m^{w_j}}\\
&\times a_n(x)b_n(y)\prod_{i\in I}\tau_{i,n}(u)^{x_i}\prod_{j\in J}\overline{\tau_{j,n}(u)}^{y_j},
\end{align*}
where $x,z\in \Z_{\geq 0}^I$ and $y,w\in \Z_{\geq 0}^J$. 
Since 
$$\frac{a_m(z)}{a_n(x)}
\prod_{i\in I}
\left(
\begin{array}{c}
z_i\\
x_i 
\end{array}
\right)
\frac{n^{x_i}(m-n)^{z_i-x_i}}{m^{z_i}}
=\prod_{i\in I} 
\frac{a_i^{z_i-x_i}(m-n)^{z_i-x_i}e^{-(m-n)a_i}}{(z_i-x_i)!},
$$
we get 
\begin{align*}
\lefteqn{f_n(x,y)} \\
 &=\sum_{z\geq x,\;w\geq y}f_m(z,w) \prod_{i\in I} 
\frac{\{(m-n)a_i\}^{z_i-x_i}e^{-(m-n)a_i}}{(z_i-x_i)!}
\prod_{j\in J}
\frac{\{(m-n)b_j\}^{w_j-y_j}e^{-(m-n)b_j}}{(w_j-y_j)!}.
\end{align*}
Now Lemma \ref{tail boundary} implies that $\{f_n\}_{n=1}$ is a constant sequence consisting of a constant 
function, and $\chi=\chi_{\tau,\tau'}$. 
This shows $\chi_{\tau,\tau'}\in \ex\Char(U_\to(A))$. 
\end{proof}

\begin{proof}[Proof of Theorem \ref{main3}] 
Let $\chi\in \ex\Char(U_\to(A))$. 
Applying the ergodic method to $\varinjlim G_n$ with $G_n=U_\to(B\otimes M_n(\C))$, 
we get a sequence of characters $\chi_n\in \ex\Char(G_n)$ such that 
$\{\chi_m(\Phi_{n,m}(u))\}_{m=n}^\infty$ converges to $\chi(\Phi_{n,\infty}(u))$ uniformly on $G_n$ for every $n$, 
and $\chi_m\circ \Phi_{n,m}$ contains $\chi_n$ for any $n<m$.  
Theorem \ref{main1} shows that there exist $\varphi_n\in \Hom(K_0(A),\Z)$ and $p_{i,n},q_{i,n}\in \Z_{\geq 0}$ 
such that for any $u\in G_n$ we have
$$\chi_n(u)=\mathrm{det}_{\varphi_n}(u)
\prod_{i=1}^s\tau_{i,n}(u)^{p_{i,n}}\overline{\tau_{i,n}(u)}^{q_{i,n}}.$$
Moreover we have
\begin{align*}
\lefteqn{\chi_m(\Phi_{n,m}(u))} \\
 &=\mathrm{det}_{\varphi_{m}}(u)\prod_{i=1}^s(\frac{n\tau_{i,n}(u)+m-n}{m})^{p_{i,m}}
 (\frac{n\overline{\tau_{i,n}(u)}+m-n}{m})^{q_{i,m}}\\
 &=\mathrm{det}_{\varphi_{m}}(u)\prod_{i=1}^s(1+\frac{n\tau_{i,n}(u-1_n)}{m})^{p_{i,m}}
 (1+\frac{n\tau_{i,n}(u^*-1_n)}{m})^{q_{i,m}}. 
\end{align*}
Thus $\{\varphi_m\}_{m=1}^\infty$ is a constant sequence, 
say $\varphi$, and $\{p_{i,m}\}_{m=1}^\infty$ and $\{q_{i,m}\}_{m=1}^\infty$ are increasing. 

We claim that $\{p_{i,m}/m\}_{m=1}^\infty$ and $\{q_{i,m}/m\}_{m=1}^\infty$ are bounded. 
For $u\in G_n$, let $z_i=\tau_{i,n}(u-1_n)$. 
Then $\Re z_i\leq 0$, and $\Re z_i=0$ if and only if $u=1_n$. 
Since $1+t\leq e^{t}$ holds for any $t\in \R$, we have 
$$|(1+\frac{nz_i}{m})^{p_{i,m}}|=(1+\frac{2n\Re z_i+|n^2z_i|^2/m}{m})^{\frac{p_{i,m}}{2}}\leq 
e^{(n\Re z_i+|n^2z_i|^2/2m)\frac{p_{i,m}}{m}}.$$
Thus if there existed $i$ such that either $\{p_{i,m}/m\}_{m=1}^\infty$ or $\{q_{i,m}/m\}_{m=1}^\infty$ is 
unbounded, we would have 
$$\chi(v)=\left\{
\begin{array}{ll}
1 , &\quad v=1 \\
0 , &\quad v\neq 1
\end{array}
\right.,$$
which is not continuous. 
Thus  $\{p_{i,m}/m\}_{m=1}^\infty$ and $\{q_{i,m}/m\}_{m=1}^\infty$ are bounded. 

Choosing an appropriate subsequence, we may assume that the following limits exist: 
$$a_i=\lim_{k\to \infty}\frac{p_{i,m_k}}{m_k},\quad b_i=\lim_{k\to \infty}\frac{q_{i,m_k}}{m_k},\quad 
1\leq \forall i\leq s.$$
Thus 
\begin{align*}
\lefteqn{\chi(\Phi_{n,\infty}(u))=\lim_{m\to\infty}\chi_m(\Phi_{n,m}(u))} \\
 &=\lim_{k\to\infty}\mathrm{det}_{\varphi}(u)
 \prod_{i=1}^s(1+\frac{n\tau_{i,n}(u-1_n)}{m_k})^{p_{i,m_k}}
 (1+\frac{n\tau_{i,n}(u^*-1_n)}{m_k})^{q_{i,m_k}}\\
 &=\mathrm{\det}_{\varphi}(u)\prod_{i=1}^s e^{a_in\tau_{i,n}(u-1_n)}e^{b_in\tau_{i,n}(u^*-1_n)}.
\end{align*}
Letting 
$$\tau=\sum_{i=1}^sa_i\tau_i\otimes \Tr,\quad \tau'=\sum_{i=1}^sb_i\tau_i\otimes \Tr,$$
we get $\chi=\det_\varphi \chi_{\tau,\tau'}$, which finishes the proof. 
\end{proof}

It is natural to ask whether we can drop the condition $\dim TW(A)<\infty$ in Theorem \ref{main3}. 

\begin{con} Let $A$ be a stable simple AF algebra not isomorphic to $\K$. 
Then 
$$\ex \Char(U_\to(A))=\{\mathrm{det}_\varphi\chi_{\tau,\tau'};\; \tau,\tau'\in TW(A),\; 
\varphi\in \Hom(K_0(A),\Z)\}.$$
\end{con}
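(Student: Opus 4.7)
The plan is to extend the strategy of Theorem~\ref{main3} by replacing finite sums over $\ex T(B)$ with measure-theoretic arguments on the compact Choquet simplex $T(B)$. Write $A = B \otimes \K$ with $B$ an infinite-dimensional unital simple AF algebra and set $G_n = U_\to(B \otimes M_n(\C))$. Recall that $TW(A)$ is naturally identified with the cone of finite positive measures $\mu$ on $\ex T(B)$ via $\mu \mapsto \tau_\mu := (\int \sigma\, d\mu(\sigma)) \otimes \Tr$.

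Given $\chi \in \ex \Char(U_\to(A))$, I first apply the Vershik--Kerov ergodic method to produce $\chi_n \in \ex\Char(G_n)$ with $\chi_n \subset \chi_{n+1}|_{G_n}$ and $\chi_m(\Phi_{n,m}(u)) \to \chi(\Phi_{n,\infty}(u))$ uniformly on $G_n$. Theorem~\ref{main1},(1) gives
$$\chi_n(u) = \det_{\varphi_n}(u) \prod_{i=1}^{k_n} \sigma_{n,i}(u) \prod_{j=1}^{l_n} \overline{\sigma'_{n,j}(u)},$$
with $\sigma_{n,i}, \sigma'_{n,j} \in \ex T(B\otimes M_n(\C))$. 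Using $\ex T(B\otimes M_n(\C)) \cong \ex T(B)$, I encode this data as discrete positive measures $\mu_n^\pm$ on $T(B)$, each appearing extreme trace carrying mass $1/n$. The $K_0$-matching argument of Theorem~\ref{main3} forces $\varphi_n$ to stabilise for large $n$ to some $\varphi \in \Hom(K_0(A),\Z)$.

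The central technical step is a uniform bound on the total masses $\|\mu_n^\pm\|$. Fix a nonzero projection $p \in B$; by simplicity of $B$ and compactness of $T(B)$, $c := \min_{\tau\in T(B)} \tau(p) > 0$. Evaluating $\chi_n$ along the one-parameter family $u_t := \Phi_{1,n}(e^{it}p + 1_B - p)$, an elementary bound on each factor of the product formula gives, for all sufficiently large $n$,
$$|\chi_n(u_t)|^2 \leq \exp\!\Bigl(-(1-\cos t)\!\int \tau(p)\,d(\mu_n^+ + \mu_n^-)(\tau)\Bigr).$$
Continuity of $\chi$ at $1$ yields $|\chi(\Phi_{1,\infty}(u_{t_0}))|^2 > 0$ for some small $t_0 > 0$, and combined with $\tau(p) \geq c$ this forces $\|\mu_n^+\|+\|\mu_n^-\| \leq C$. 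Weak-$*$ compactness of measures on the compact metric space $T(B)$ then delivers a subsequence along which $\mu_n^\pm \to \mu^\pm$ weakly. For fixed $u \in G_m$, first-order expansion of the logarithms in the product formula, whose remainder is controlled by the uniform mass bound, produces
$$\log\chi_n(\Phi_{m,n}(u)) \to \log\det_\varphi(u) + \tau_{\mu^+}(u-1_m) + \overline{\tau_{\mu^-}(u-1_m)},$$
so $\chi = \det_\varphi \chi_{\tau^+,\tau^-}$ with $\tau^\pm = \tau_{\mu^\pm} \in TW(A)$.

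What remains is the converse direction: every $\chi_{\tau,\tau'}$ with $\tau,\tau' \in TW(A)$ must be shown to be indecomposable. Writing $\tau = \tau_{\mu^+}$ and $\tau' = \tau_{\mu^-}$, the restriction of $\chi_{\tau,\tau'}$ to $G_n$ decomposes as an integral of monomials in the extreme traces against a pair of Poisson point processes on $\ex T(B)$ of intensities $n\mu^\pm$, and the connecting map $\Phi_{n,m}$ induces a Poisson superposition between levels $n$ and $m$. The analogue of Lemma~\ref{tail boundary} then reduces to triviality of the tail $\sigma$-algebra of a Poisson point process on a standard Borel space. I expect this to be the main obstacle: when $\ex T(B)$ fails to be locally compact, the configuration spaces and their compatibility under the superposition kernels need to be formalised carefully before the backwards-martingale argument of Lemma~\ref{tail boundary} can be ported.
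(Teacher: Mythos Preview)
The statement you are attempting to prove is stated in the paper as a \emph{conjecture}, not a theorem; the paper provides no proof and explicitly poses it as an open problem immediately after the proof of Theorem~\ref{main3}. There is therefore no proof in the paper to compare your proposal against.

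Your strategy is the natural measure-theoretic extension of the paper's argument for Theorem~\ref{main3}, and the forward direction (every indecomposable character has the required form) looks broadly sound. The uniform mass bound via evaluation at $e^{it}p + 1_B - p$ works essentially as in the paper's boundedness step, and weak-$*$ compactness on the compact simplex $T(B)$ handles the limit. One minor point: your limit measures $\mu^\pm$ live a priori on $T(B)$ rather than on $\ex T(B)$, but this is harmless since the barycentre $\int \sigma\,d\mu(\sigma)$ is still a trace on $B$ and hence gives an element of $TW(A)$.

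The converse direction is where the genuine difficulty lies, and you have correctly located it. The paper's Lemma~\ref{tail boundary} is a tail-triviality statement for an $m$-dimensional Poisson random walk; what you need is tail triviality for a Poisson point process on the Polish space $\ex T(B)$, together with a precise identification of the branching kernel between levels with Poisson superposition. Tail triviality of Poisson processes is classical, but translating the character-theoretic decomposition into the correct Markov chain on configuration spaces, and verifying that the bounded-harmonic-function argument of Lemma~\ref{tail boundary} survives when $\ex T(B)$ is merely a Borel subset of a Choquet simplex, is real work that neither the paper nor your proposal carries out. Until that step is completed, what you have is a plausible outline for an open conjecture, not a proof.
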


\section{Type II factors}
Our purpose in this final section is to prove Theorem \ref{II1} and Theorem \ref{IIinfty}. 

\begin{proof}[Proof of Theorem \ref{II1}] 
We first show the statement for the hyperfinite II$_1$ factor $\cR_0$.
We can choose a dense $C^*$-subalgebra $A\subset \cR_0$ isomorphic to the CAR algebra. 
Since $U(A)$ is dense in $U(\cR_0)$ and the embedding map of $U(A)$ with the norm topology 
into $U(\cR_0)$ with the strong operator topology is continuous, the statement follows from 
Theorem \ref{main1} and Corollary \ref{decomposition}. 

Let $R$ be a general II$_1$ factor now, and let $\tau$ be the unique tracial state on $R$. 
Then there exists a unital embedding of the hyperfinite II$_1$ factor $\cR_0$ into $R$. 
We show that every $\chi \in \Char(U(R))$ is uniquely decomposed as 
$$\chi=\sum_{p,q\in \Z_{\geq 0}}c_{p,q}\tau^p\overline{\tau}^q,\quad c_{p,q}\geq 0,$$
which will finish the proof. 
Indeed, the restriction of $\chi$ to $U(\cR_0)$ is uniquely decomposed as  
$$\chi|_{U(\cR_0)}=\sum_{p,q\in \Z_{\geq 0}}c_{p,q}\tau_{\cR_0}^p\overline{\tau_{\cR_0}}^q,\quad c_{p,q}\geq 0,$$
where $\tau_{\cR_0}$ is the restriction of $\tau$ to $\cR_0$. 
As in the proof of Lemma \ref{invaraince}, we can show that for any $u\in U(R)$ there exist sequences of unitaries 
$u_n\in U(\cR_0)$ and $w_n\in U(R)$ satisfying $\|w_nuw_n^*-u_n\|\to 0$, and we get
$$\chi(u)=\lim_{n\to\infty}\chi(u_n)=\lim_{n\to\infty}\sum_{p,q\geq 0}c_{p,q}\tau(u_n)^p\overline{\tau(u_n)}^q=
\sum_{p,q\geq 0}c_{p,q}\tau(u)^p\overline{\tau(u)}^q.$$
Since the decomposition is unique on $U(\cR_0)$, it is unique on $U(R)$ too. 
\end{proof}

Let $M$ be a type II$_\infty$ factor with separable predual, and let $\tau_\infty$ be a 
normal semifinite trace on $M$. 
Then there exists a II$_1$ factor $R$ with a unique tracial state $\tau$ satisfying  
$M=R\otimes B(\ell^2)$ and $\tau_\infty=\tau\otimes \Tr$. 
Let $\{e_{ij}\}_{i,j\in \N}$ be the canonical system of matrix units of $B(\ell^2)$, 
and let $e_n=\sum_{i=1}^n 1\otimes e_{ii}\in M$. 
Then $\tau_\infty(e_n)=n$ and $\{e_n\}_{n=1}^\infty$ converges to $1$ in the strong operator topology. 
We set $G_n=U(e_nMe_n)$, which has the strong operator topology, and embed it into $U(M)_1$ by $u\mapsto u+1-e_n$. 
We regard $G=\varinjlim G_n$ as a subgroup of $U(M)_1$.

\begin{lemma} The group $G$ is dense in $U(M)_1$. 
\end{lemma}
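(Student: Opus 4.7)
The plan is to approximate $u \in U(M)_1$ by first compressing it to $e_n M e_n$, then polar-decomposing to obtain a genuine unitary in $U(e_n M e_n)$, and finally showing the resulting element of $G_n$ is close to $u$ in the $\|\cdot\|_1$ metric.

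Setting $a = u - 1 \in L^1(M,\tau_\infty) \cap M$ (bounded, with $\|a\|_\infty \le 2$), $f_n = 1 - e_n$, and $b_n = e_n u e_n \in e_n M e_n$, a direct algebraic check shows $b_n + f_n - u = e_n a e_n - a$. The first task is therefore the $L^1$-convergence $\|a - e_n a e_n\|_1 \to 0$. Splitting $a - e_n a e_n = f_n a e_n + e_n a f_n + f_n a f_n$, reducing to the positive part of $a$, and using that $y \mapsto \tau_\infty(|a|^{1/2} y |a|^{1/2})$ is a normal positive linear functional vanishing on the $\sigma$-weakly null net $f_n$, one obtains the convergence (with Cauchy--Schwarz via $\|f_n a^{1/2}\|_2$ controlling the off-diagonal pieces).

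Next, since $e_n M e_n$ is a type II$_1$ factor, the polar decomposition $b_n = w_n |b_n|$ admits an extension to a unitary $v_n \in U(e_n M e_n)$: the complements in $e_n$ of the initial and range projections of $w_n$ have equal (finite) trace, so they are Murray--von Neumann equivalent and can be matched by an auxiliary partial isometry $w_n'$ to give $v_n := w_n + w_n'$. A routine triangle-inequality computation yields $\|v_n - b_n\|_1 \le 2\,\|e_n - |b_n|\|_1$.

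The main technical point is thus $\|e_n - |b_n|\|_1 \to 0$. The key identity is $|b_n|^2 = e_n u^* e_n u e_n = e_n - e_n u^* f_n u e_n$; combining this with the operator inequality $e_n - X \le e_n - X^2$ for $0 \le X \le e_n$ gives $\|e_n - |b_n|\|_1 \le \tau_\infty(e_n u^* f_n u e_n) = \|f_n u e_n\|_2^2 = \|f_n a e_n\|_2^2$. Since $a \in M$ is bounded with $a \in L^1$, we have $a a^* \in L^1_+$, whence $\|f_n a\|_2^2 = \tau_\infty(a a^* f_n) \to 0$ by normality of $x \mapsto \tau_\infty(a a^* x)$. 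Assembling the pieces, $v_n + f_n \in G_n$ and $\|u - (v_n + f_n)\|_1 \to 0$, proving density. The only delicate point is the upgrade from $L^2$ estimates to an $L^1$ estimate on $\|v_n - b_n\|_1$; this is precisely what the polar-decomposition step accomplishes, by letting the $L^1$-cost of the unitary correction be measured by the trace of a single positive operator.
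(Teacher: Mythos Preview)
Your proof is correct and follows essentially the same route as the paper: compress to $e_nMe_n$, show the $L^1$ error $\|e_nue_n+1-e_n-u\|_1\to 0$ via a Cauchy--Schwarz/normality argument on $|u-1|$, polar-decompose $e_nue_n$, extend the partial isometry to a unitary in the finite factor $e_nMe_n$, and bound the resulting error by $\tau_\infty(e_n-|e_nue_n|)$. The only notable difference is the operator inequality used at that last step: the paper applies $1-t\le(1-t^2)^{1/2}$ to obtain $\tau_\infty(e_n-|b_n|)\le\|f_n(u-1)e_n\|_1$, whereas you use $t^2\le t$ to get $\tau_\infty(e_n-|b_n|)\le\|f_n(u-1)e_n\|_2^2$; both work since $u-1$ is bounded and in $L^1$, hence in $L^2$.
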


\begin{proof} Let $u\in U(M)_1$, and let $\varepsilon>0$. 
Then 
\begin{align*}
\lefteqn{\|(u-1)(1-e_n)\|_1=\||u-1|(1-e_n)\|_1} \\ 
&\leq \||u-1|^{1/2}\|_2\||u-1|^{1/2}(1-e_n)\|_2\\
&=\|u-1\|_1^{1/2}\sqrt{\tau_\infty(|u-1|)-\tau_\infty(|u-1|^{1/2}e_n|u-1|^{1/2})},
\end{align*} 
which converges to 0 as $n$ tends to $\infty$. 
A similar estimate holds for $\|(1-e_n)(u-1)\|_1$, and we can choose $n$ satisfying 
$\|(u-1)(1-e_n)\|_1<\varepsilon$, and $\|(1-e_n)(u-1)\|_1<\varepsilon$. 
Thus 
\begin{align*}
\lefteqn{\|e_nue_n+1-e_n-u\|_1}\\
&=\|(1-e_n)(1-u)(1-e_n)-(1-e_n)ue_n-e_nu(1-e_n)\|_1\\
&\leq \|(1-e_n)(1-u)(1-e_n)\|_1+\|(1-e_n)(u-1)e_n\|_1+\|e_n(u-1)(1-e_n)\|_1\\
&\leq 2\|(u-1)(1-e_n)\|_1+\|(1-e_n)(u-1)\|_1<3\varepsilon. 
\end{align*}

We would like to approximate $e_nue_n$ by a unitary in $e_nMe_n$. 
Since $1-t\leq (1-t^2)^{1/2}$ holds for $0\leq t\leq 1$, \begin{align*}
\lefteqn{\tau_\infty(e_n-|e_nue_n|)\leq \tau_{\infty}((e_n-e_nu^*e_nue_n)^{1/2})
} \\
 &=\tau_{\infty}(\{e_nu^*(1-e_n)ue_n\}^{1/2})=\tau_{\infty}(\{e_n(u^*-1)(1-e_n)(u-1)e_n\}^{1/2})\\
 &=\|(1-e_n)(u-1)e_n\|_1\leq \|(1-e_n)(u-1)\|_1<\varepsilon. 
\end{align*}
Let $e_nue_n=v_n|e_nue_n|$ be the polar decomposition. 
Since $e_nMe_n$ is finite, we can choose a partial isometry $w_n\in e_nMe_n$ such that 
$w_n^*w_n=e_n-v_n^*v_n$ and $w_n+v_n$ is a unitary in $U(e_nMe_n)$.  
Now we have 
\begin{align*}
\lefteqn{\|w_n+v_n+1-e_n-u\|_1\leq \|w_n+v_n-e_nue_n\|_1+\|e_nue_n+1-e_n-u\|_1} \\
 &<3\varepsilon +\|w_n\|_1+\|v_n-v_n|e_nue_n|\|_1
 =3\varepsilon +\tau_\infty(e_n-v_n^*v_n)+\tau_\infty(v_n^*v_n-|e_nue_n|)\\
 &=3\varepsilon +\tau_\infty(e_n-|e_nue_n|)<4\varepsilon,
\end{align*}
which shows that $G$ is dense in $U(M)_1$. 
\end{proof}

\begin{proof}[Proof of Theorem \ref{IIinfty}] 
We equip $G$ with the inductive limit topology. 
Then essentially the same argument as in the proof of Theorem \ref{main3} shows 
$$\ex\Char(G)=\{\chi_{a,b};\; a,b\geq 0\}.$$
Since the embedding map from $G$ into $U(M)_1$ is continuous and $G$ is dense in $U(M)_1$, 
we get (1). 

For $1\leq q<p$ and $u,v\in U(M)_{q}$, we have 
$$\|u-v\|_p=\tau_\infty(|u-v|^p)^{1/p}\leq (\||u-v|^{p-q}\|\tau(|u-v|^q))^{1/p}\leq 2^{1-q/p}\|u-v\|_q^{q/p},$$
which implies that the embedding of $U(M)_q$ into $U(M)_p$ is continuous. 
It is easy to show that $U(M)_q$ is dense in $U(M)_p$. 
Thus we can repeat a similar argument as above replacing $G$ with $U(M)_1$ and $U(M)_2$ 
for (2) and (3) respectively. 

Let $1<p\leq 2$. 
For $u\in U(M)_1$, we have $\chi_{a,b}(u)=\chi_c(u)\chi_{a-c,b-c}(u)$ with $c=\min\{a,b\}$. 
Since $\chi_c$ is continuous on $U(M)_p$, the character $\chi_{a,b}$ continuously extends to 
$U(M)_p$ if and only if $a-c=b-c=0$, and we get (2). 

Let $2<p$. 
Then $\chi_a$ continuously extends to $U(M)_p$ if and only if $a=0$, and we get (3).  
\end{proof}


\end{document}